\pgfplotsset{compat=1.17}
\definecolor{wrwrwr}{rgb}{0.3803921568627451,0.3803921568627451,0.3803921568627451}
\definecolor{rvwvcq}{rgb}{0.08235294117647059,0.396078431372549,0.7529411764705882}
\definecolor{mblue}{rgb}{0.2, 0.3, 0.8}
\definecolor{morange}{rgb}{1, 0.5, 0}
\definecolor{mgreen}{rgb}{0.1, 0.4, 0.2}
\definecolor{mred}{rgb}{0.5, 0, 0}
\numberwithin{equation}{section}
\newcommand{\la}{\langle}
\newcommand{\rg}{\rangle}
\newtheorem{theorem}{{Theorem}}[section]
\newtheorem*{theorem*}{Theorem}
\newtheorem{lemma}[theorem]{Lemma}
\newtheorem{proposition}[theorem]{Proposition}
\newtheorem{claim}[theorem]{Claim}
\newtheorem{example}[theorem]{Example}
\newtheorem{corollary}[theorem]{Corollary}
\newtheorem*{corollary*}{Corollary}
\newcommand{\ve}{\varepsilon}
\newcommand{\mr}[1]{{\rm #1}}
\newcommand{\cE}{\mathcal{E}}\newcommand{\cF}{\mathcal{F}}
\newcommand{\cH}{\mathcal{H}}
\newcommand{\cI}{\mathcal{I}}\newcommand{\cJ}{\mathcal{J}}
\newcommand{\cL}{\mathcal{L}}
\newcommand{\cN}{\mathcal{N}}
\newcommand{\cR}{\mathcal{R}}
\newcommand{\cT}{\mathcal{T}}
\newcommand{\cV}{\mathcal{V}}
\newcommand{\bC}{\mathbb{C}}
\newcommand{\bH}{\mathbb{H}}
\newcommand{\bN}{\mathbb{N}}
\newcommand{\bP}{\mathbb{P}}
\newcommand{\bR}{\mathbb{R}}
\newcommand{\bS}{\mathbb{S}}\newcommand{\bT}{\mathbb{T}}
\newcommand{\bZ}{\mathbb{Z}}
\newcommand{\fp}{\mathfrak{p}}
\newcommand{\nc}{\newcommand}
\nc{\on}{\operatorname}
\nc{\p}{\partial}
\nc{\ol}{\overline}
\nc{\ul}{\underline}
\nc{\pa}{\partial}
\nc{\pb}{\partial_b}
\nc{\pc}{\partial_c}
\nc{\pd}{\partial_d}
\nc{\pe}{\partial_e}
\nc{\pf}{\partial_f}
\nc{\pg}{\partial_g}
\nc{\ph}{\partial_h}
\nc{\pari}{\partial_i}
\nc{\pj}{\partial_j}
\nc{\pk}{\partial_k}
\nc{\pl}{\partial_l}
\nc{\pell}{\partial_\ell}
\nc{\parm}{\partial_m}
\nc{\pn}{\partial_n}
\nc{\po}{\partial_o}
\nc{\pp}{\partial_p}
\nc{\pq}{\partial_q}
\nc{\pr}{\partial_r}
\nc{\ps}{\partial_s}
\nc{\pt}{\partial_t}
\nc{\pu}{\partial_u}
\nc{\pv}{\partial_v}
\nc{\pw}{\partial_w}
\nc{\px}{\partial_x}
\nc{\py}{\partial_y}
\nc{\pz}{\partial_z}
\nc{\pabar}{\partial_{\ol{a}}}
\nc{\pbbar}{\partial_{\ol{b}}}
\nc{\pcbar}{\partial_{\ol{c}}}
\nc{\pdbar}{\partial_{\ol{d}}}
\nc{\pebar}{\partial_{\ol{e}}}
\nc{\pfbar}{\partial_{\ol{f}}}
\nc{\pgbar}{\partial_{\ol{g}}}
\nc{\phbar}{\partial_{\ol{h}}}
\nc{\pibar}{\partial_{\ol{i}}}
\nc{\pjbar}{\partial_{\ol{j}}}
\nc{\pkbar}{\partial_{\ol{k}}}
\nc{\plbar}{\partial_{\ol{l}}}
\nc{\pellbar}{\partial_{\ol{\ell}}}
\nc{\pmbar}{\partial_{\ol{m}}}
\nc{\pnbar}{\partial_{\ol{n}}}
\nc{\pobar}{\partial_{\ol{o}}}
\nc{\ppbar}{\partial_{\ol{p}}}
\nc{\pqbar}{\partial_{\ol{q}}}
\nc{\prbar}{\partial_{\ol{r}}}
\nc{\psbar}{\partial_{\ol{s}}}
\nc{\ptbar}{\partial_{\ol{t}}}
\nc{\pubar}{\partial_{\ol{u}}}
\nc{\pvbar}{\partial_{\ol{v}}}
\nc{\pwbar}{\partial_{\ol{w}}}
\nc{\pxbar}{\partial_{\ol{x}}}
\nc{\pybar}{\partial_{\ol{y}}}
\nc{\pzbar}{\partial_{\ol{z}}}
\nc{\nababar}{\nabla_{\ol{a}}}
\nc{\nabbbar}{\nabla_{\ol{b}}}
\nc{\nabcbar}{\nabla_{\ol{c}}}
\nc{\nabdbar}{\nabla_{\ol{d}}}
\nc{\nabebar}{\nabla_{\ol{e}}}
\nc{\nabfbar}{\nabla_{\ol{f}}}
\nc{\nabgbar}{\nabla_{\ol{g}}}
\nc{\nabhbar}{\nabla_{\ol{h}}}
\nc{\nabibar}{\nabla_{\ol{i}}}
\nc{\nabjbar}{\nabla_{\ol{j}}}
\nc{\nabkbar}{\nabla_{\ol{k}}}
\nc{\nablbar}{\nabla_{\ol{l}}}
\nc{\nabelllbar}{\nabla_{\ol{\ell}}}
\nc{\nabmbar}{\nabla_{\ol{m}}}
\nc{\nabnbar}{\nabla_{\ol{n}}}
\nc{\nabobar}{\nabla_{\ol{o}}}
\nc{\nabpbar}{\nabla_{\ol{p}}}
\nc{\nabqbar}{\nabla_{\ol{q}}}
\nc{\nabrbar}{\nabla_{\ol{r}}}
\nc{\nabsbar}{\nabla_{\ol{s}}}
\nc{\nabtbar}{\nabla_{\ol{t}}}
\nc{\nabubar}{\nabla_{\ol{u}}}
\nc{\nabvbar}{\nabla_{\ol{v}}}
\nc{\nabwbar}{\nabla_{\ol{w}}}
\nc{\nabxbar}{\nabla_{\ol{x}}}
\nc{\nabybar}{\nabla_{\ol{y}}}
\nc{\nabzbar}{\nabla_{\ol{z}}}
\nc{\naba}{\nabla_{a}}
\nc{\nabb}{\nabla_{b}}
\nc{\nabc}{\nabla_{c}}
\nc{\nabd}{\nabla_{d}}
\nc{\nabe}{\nabla_{e}}
\nc{\nabf}{\nabla_{f}}
\nc{\nabg}{\nabla_{g}}
\nc{\nabh}{\nabla_{h}}
\nc{\nabi}{\nabla_{i}}
\nc{\nabj}{\nabla_{j}}
\nc{\nabk}{\nabla_{k}}
\nc{\nabl}{\nabla_{l}}
\nc{\nabm}{\nabla_{m}}
\nc{\nabn}{\nabla_{n}}
\nc{\nabo}{\nabla_{o}}
\nc{\nabp}{\nabla_{p}}
\nc{\nabq}{\nabla_{q}}
\nc{\nabr}{\nabla_{r}}
\nc{\nabs}{\nabla_{s}}
\nc{\nabt}{\nabla_{t}}
\nc{\nabu}{\nabla_{u}}
\nc{\nabv}{\nabla_{v}}
\nc{\nabw}{\nabla_{w}}
\nc{\nabx}{\nabla_{x}}
\nc{\naby}{\nabla_{y}}
\nc{\nabz}{\nabla_{z}}
\nc{\ola}{\ol{a}}
\nc{\olb}{\ol{b}}
\nc{\olc}{\ol{c}}
\nc{\old}{\ol{d}}
\nc{\ole}{\ol{e}}
\nc{\olf}{\ol{f}}
\nc{\olg}{\ol{g}}
\nc{\olh}{\ol{h}}
\nc{\oli}{\ol{i}}
\nc{\olj}{\ol{j}}
\nc{\olk}{\ol{k}}
\nc{\oll}{\ol{l}}
\nc{\olm}{\ol{m}}
\nc{\oln}{\ol{n}}
\nc{\olo}{\ol{o}}
\nc{\olp}{\ol{p}}
\nc{\olq}{\ol{q}}
\nc{\olr}{\ol{r}}
\nc{\ols}{\ol{s}}
\nc{\olt}{\ol{t}}
\nc{\olu}{\ol{u}}
\nc{\olv}{\ol{v}}
\nc{\olw}{\ol{w}}
\nc{\olx}{\ol{x}}
\nc{\oly}{\ol{y}}
\nc{\olz}{\ol{z}}
\title{Cohomogeneity two Ricci solitons with sub-Euclidean volume}
\date{\today}
\author[B. Firester]{Benjy Firester}
\address{{\href{mailto:benjyfir@mit.edu}{benjyfir@mit.edu}}}
\author[R. Tsiamis]{Raphael Tsiamis}
\address{
\href{mailto:r.tsiamis@columbia.edu}{r.tsiamis@columbia.edu}
}
\def\expandafter\normalsize\expandafter{%
    \normalsize%
    \setlength\abovedisplayskip{1pt}%
    \setlength\belowdisplayskip{4pt}%
    \setlength\abovedisplayshortskip{-4pt}%
    \setlength\belowdisplayshortskip{1pt}%
}
\xpatchcmd{\proof}{\topsep6\p@\@plus6\p@\relax}{}{}{}
\nc{\pbnski}{Pleba\'nski–Demia\'nski }
\nc{\metricQIsSqrtYSqXSq}{S8}
\nc{\metricSexBuffet}{S9}
\nc{\metricFaCK}{S7}
\nc{\metricStarStar}{S2}
\nc{\metricQIsSqrtXPlusY}{S3}
\nc{\metricStarStarBar}{S4}
\nc{\metricConfProdCigars}{S5}
\nc{\metricfifthOrderh}{S1}
\nc{\metricCfCf}{S6}
\begin{document}
\allowdisplaybreaks
\vspace{-0.2cm}
\begin{abstract}
    We introduce new families of four-dimensional Ricci solitons of cohomogeneity two with volume collapsing ends.
    In a local presentation of the metric conformal to a product, we reduce the soliton equation to a degenerate Monge-Amp\`ere equation for the conformal factor coupled with ODEs. 
    We obtain explicit complete expanding solitons as well as abstract existence results for shrinking and steady solitons with boundary.
    These families of Ricci solitons specialize to classical examples of Einstein and soliton metrics.
    We also classify local solutions of this Monge-Amp\`ere equation to prove rigidity for these solitons. 
\end{abstract}

\maketitle
\vspace{-0.75cm}
\section{Introduction}

Ricci solitons generalize the notion of Einstein metrics as self-similar solutions of the Ricci flow.
They arise as singularity models of the Ricci flow and are critical to understanding the landscape of geometric structures, a major open question in four-dimensional geometry. 
The soliton equation $\on{Ric} + \frac12 \cL_V g = \lambda g$ is a system of non-linear PDEs, for which most explicit solutions come from imposing a high degree of symmetry, such as metrics of cohomogeneity one or two, to simplify the equations.
We examine and construct a new class of four-dimensional metrics of cohomogeneity two specializing to known examples (see Table~\ref{fig:SolitonEncyclopedia}), and give conditions for the existence and rigidity of Ricci solitons.
These include families of complete expanding solitons as well as steady and shrinking solitons with boundaries along which the curvature blows up. 

Bryant developed a framework for studying gradient Ricci solitons with rotational symmetry \cite{bryant}, reducing the soliton equation to a second-order ODE system whose solutions have strong rigidity properties \cites{brendle-1, brendle-2}.
Koiso produced examples of solitons on compact K\"ahler manifolds, arising as cohomogeneity one toric K\"ahler metrics on $\bP^1$-bundles over $\bC \bP^n$ \cite{koiso}. 
Similar symmetries of the metric on appropriate bundles and blowups were employed by Cao and by Feldman-Ilmanen-Knopf to construct rotationally symmetric K\"ahler-Ricci solitons \cites{cao-rotational,fik}.
Bamler-Cifarelli-Conlon-Deruelle obtained another example in complex dimension two, namely a gradient K\"ahler-Ricci soliton with cohomogeneity two on $\mr{Bl}_p \bC \times \bP$ not arising via a reduction technique \cite{bccd}.
The classification of all K\"ahler-Ricci shrinker surfaces was completed by Li-Wang \cite{li-wang}.
Beyond the K\"ahler setting, the landscape of symmetric four-dimensional Ricci solitons is not fully understood and presents the natural next target.

Bamler's foundational work \cite{bamler-annals} showed that the pointed Gromov-Hausdorff limit of appropriate rescalings of any Ricci flow with a Type-I scalar curvature bound is a gradient singular Ricci shrinker with singularities of codimension four.
In four dimensions, the singularity models become more complicated and may have blowup singularities along different scales \cite{Appleton}, further motivating the study of Ricci solitons with incompleteness or singularities \cites{bcdmz, hallgren-1, hallgren-2}.
Alexakis-Chen-Fournodavlos produced singular Ricci solitons in all dimensions, whose metrics have curvature blowing up at a point and become instantaneously incomplete (but non-singular) under the Ricci flow diffeomorphisms \cite{singular-solitons}.
Hui exhibited unique global, singular, rotationally symmetric steady and expanding Ricci solitons of the form $\frac{da^2}{h(a^2)} + a^2 g_{\bS^n}$ \cite{hui}.
Singular metrics have also been employed as models to construct smooth Einstein metrics \cites{Hein+EtAl, Fu+EtAl}.

We study cohomogeneity two metrics satisfying the following geometric properties:  
\begin{equation}\label{eqn:metric-assumptions}
    \begin{tabular}{ll}
        (i) & the metric $g$ is toric of cohomogeneity two, \\
        (ii) &  the self-dual and anti self-dual Weyl curvature tensors $W^{\pm}$ have the same spectrum, and \\
        (iii) & the spectrum of $W^+$ has two distinct eigenvalues, one with multiplicity two.
    \end{tabular}
\end{equation}
The Ricci flow preserves the isometry group, and thus the toric symmetry (i).
The second condition, called pure curvature, is not preserved under the Ricci flow; however, it is found in conformal product metrics, which produce many known examples of Einstein metrics and Ricci solitons, including static solutions from general relativity such as the Schwarzschild metric.
The third property is conformally K\"ahler, which contains most examples of Einstein metrics and Ricci solitons \cites{bccd, cao-rotational, fik}.
A metric satisfying the properties~\eqref{eqn:metric-assumptions} can be locally given as conformal to a product of surfaces with metrics $dr_i^2 + f_i(r_i)^2 d\theta_i^2$ and radial variables $(r_1, r_2)$.
The conformal factor is invariant under the $\bT^2$-action, thus expressible as $q(r_1,r_2)^{-2}$.
These coordinates express the manifold as a torus fibration over a totally geodesic surface $\Omega$ given by coordinates $(r_1,r_2)$; $g$ further descends to a well-defined metric on $\Omega$, which we call the base metric. 

Under a natural coordinate change $(r_1,r_2)\rightsquigarrow (x,y)$ given in equation~\eqref{eqn:generalized-cylinder-A}, the determinant of the Euclidean Hessian $D^2 q(x,y)$ becomes an obstruction to the soliton equation, so $q$ satisfies the homogeneous Monge-Amp\`ere equation $\det D^2 q = 0$.
In Theorem~\ref{thm:solutions-to-monge-ampere}, we classify all local solutions to this equation, which has wider applications.
We say that the base metric is conformally cylindrical if $q(x,y)$ is homogeneous, i.e., $q(\tau x, \tau y) = \tau q(x,y)$. 
This condition eliminates the Monge-Amp\`ere obstruction and makes the origin an end of the manifold where the base metric is asymptotically cylindrical and the total metric is asymptotic to a cone over a two-torus. 

A sub-Euclidean volume growth property, such as conformally cylindrical, reduces the complexity of non-compact manifolds, as the non-compact pieces (ends) can be modeled on lower-dimensional examples.
Many soliton examples exhibit volume collapsing behavior: the Bryant soliton, the cigar soliton, as well as common Einstein metrics such as the Taub-NUT, Taub-Bolt, Schwarzschild, and Tian-Yau metrics \cite{Tian-Yau}. 
Using the conformally cylindrical property, we reduce the soliton equation to an ODE (see Section~\ref{sec:homogeneous}), and produce families of complete expanders as well as shrinking and steady solitons with boundary (see Table~\ref{fig:SolitonEncyclopedia}).

All previously known soliton metrics satisfying~\eqref{eqn:metric-assumptions} have a conformally cylindrical base; in the Einstein case, the conformal factor is the inverse of the scalar curvature squared \cite{derdzinski}.
This property is also implied by either of the surface factors having constant non-zero Gauss curvature.
We classify Ricci solitons with conformally cylindrical base and present new examples of solitons of any $\lambda$ exhibiting various geometric behaviors.
These include a new family of explicit complete expanding solitons~\eqref{metric:q=sqrtx2+y2} as well as shrinking and steady solitons with boundary $\mr{(}$\hyperref[cor:SexBuffet]{$\mr{\metricSexBuffet}$}$\mr{)}$, along which the curvature blows up.

\begin{theorem}\label{thm:main-theorem}
A Ricci soliton $g$ of the form~\eqref{eqn:metric-assumptions} that is conformally cylindrical is one of:
\begin{enumerate}[(i)]
    \item Einstein, specifically one of Schwarzschild, Pleba\'nski-Demia\'nski~\eqref{metric:pbnski},
    \item a product metric of compatible Einstein or soliton surfaces (including cylinders $\bR^2 \times \bS^2)$,
    \item locally conformal to $\bS^2 \times \bH^2$ or $\bR^4$ with conformal factor given by solving the ODE~\eqref{eqn:resulting-ODE}, explicitly given in~\eqref{metric:SStar} (cf.~Table~\ref{fig:SolitonEncyclopedia}~\eqref{metric:q=sqrtx2+y2} and $\mr{(}$\hyperref[cor:SexBuffet]{$\mr{\metricSexBuffet}$}$\mr{)}$), or
    \item a member of the family~\eqref{eqn:singular-soliton-metrics} of singular steady solitons degenerating to the Schwarzschild metric. 
\end{enumerate}
\end{theorem}
We further classify Ricci solitons where one of the surface factors has constant Gauss curvature, which, if the curvature is non-zero, implies the metric is conformally cylindrical.
\begin{theorem}\label{thm:secondary-thm}
    Suppose $g$ is a metric of the form~\eqref{eqn:metric-assumptions} in which one of the local surface factors has constant Gauss curvature $K$.
    If $g$ is a Ricci soliton, then
\begin{enumerate}[(i)]
    \item for $K = 0$, the metric is: Einstein, a product of Einstein/Ricci soliton surfaces, \eqref{eqn:warped-product-metric} with $b_2 = 0$, $\mr{(}$\hyperref[prop:metrics-s2-through-s5]{\mr{\metricStarStar}}$\mr{)}$, \eqref{eqn:new-soliton}, $\mr{(}$\hyperref[prop:metrics-s2-through-s5]{\mr{\metricStarStarBar}}$\mr{)}$, \eqref{eqn:Soliton-q=exp(x+y)-metric} with $k_2 = 0$, or \eqref{eqn:conformally-flat-metric}; and
    \item for $K \neq 0$, the factor is a piece of a hyperbolic cusp, the soliton is \eqref{eqn:warped-product-metric} with $C_2 = 0$, or the metric is conformally cylindrical and therefore characterized by Theorem~\ref{thm:main-theorem}.
\end{enumerate}
\end{theorem}
The conformally cylindrical base property means that the torus directions grow linearly near the origin $(x,y) = (0,0)$.
Complete metrics of the form~\eqref{metric:SStar}, including~\eqref{metric:q=sqrtx2+y2} with $b_0 > 0$, come in two types: conformally scalar flat when $c \neq 0$, and conformally flat when $c = 0$. 
In the former case, the origin is the only end, which is asymptotic to a cone over a two-torus. 
In the latter case, there is a second end, when $(x,y) \to \infty$.
On this end, the tori collapse, so the metric is asymptotically cylindrical, converging to the cylindrical base metric. 
The same holds for metrics with a boundary, such as $\mr{(}$\hyperref[cor:SexBuffet]{$\mr{\metricSexBuffet}$}$\mr{)}$, and the boundary extends to infinity.

Metrics of the form~\eqref{eqn:metric-assumptions} are actively studied and contain key geometric examples.
Similar cohomogeneity two constructions have been studied in the Einstein as well as in the K\"ahler-Einstein and K\"ahler-Ricci soliton settings, to disprove a conjecture of Cao \cites{ambitoric, apostolov-cifarelli}.
In addition to generalized cylinders, the class~\eqref{eqn:metric-assumptions} contains classical Einstein metrics such as the Riemannian Schwarzschild manifold and the \pbnski metric. 
The latter was recently employed by Alvarado-Ozuch-Santiago to produce degenerating families of Einstein metrics on $\bR^4$ with ends of novel geometry \cite{alvarado-fams}.

As classified by \cite{catino-mantegazza-mazzieri}, a complete, locally conformally flat shrinking or steady Ricci soliton is isometric to a quotient of $\bS^n, \bS^{n-1}\times \bR, \bR^n$, or the Bryant soliton.
Expanding this framework to allow incompleteness, our construction obtains a range of possible singular and new volume collapsing behaviors, as well as Ricci solitons with boundary.
We additionally produce complete locally conformally flat expanding solitons~\eqref{metric:q=sqrtx2+y2} that do not fall under their classification. 
With the exception of the Einstein metrics and appropriate products of lower dimensional known solitons, all the metrics described are previously unknown. 
In addition to the new complete expanding soliton~\eqref{metric:q=sqrtx2+y2}, the $\mr{(}$\hyperref[cor:SexBuffet]{$\mr{\metricSexBuffet}$}$\mr{)}$ metrics are new singular metrics that exhibit a singularity type like that of the Ooguri-Vafa metric, except in cohomogeneity two instead of one.
It would be interesting to use such metrics in gluing-desingularization constructions of complete cohomogeneity two Ricci solitons, analogous to constructions of complete hyperk\"ahler manifolds by gluing in Ooguri-Vafa metrics, such as~\cite{cvz-collapsing-k3}.

\begin{table}
    \centering
    \begin{tabular}{|c||c|c|c|c|}\hline 
    & $A,B,q$&$\lambda$ & Geometry & Eqn. \\ \hline \hline

         \eqref{eqn:warped-product-metric}
        & \begin{tabular}{c}
            $A =$ \eqref{eqn:a-from-mathematica} \\
             $B = b_2 y^2 + b_1y +b_0$  \\
            $q = h(x)$ 
         \end{tabular} &

            $\lambda$

         & \begin{tabular}{c}A warped product soliton with a flat surface factor\\
         Corollary~\ref{cor:familyOfS1Metric} exhibits a family of such solitons
         \end{tabular}
         &\eqref{eqn:fifth-order-ODE-for-h} \\ \hline

         (\hyperref[prop:metrics-s2-through-s5]{\metricStarStar})& \begin{tabular}{c}
            $A = a_0$ \\
             $B = b_0$  \\
            $q = \theta(x+y)$   
         \end{tabular} &
         
        $\lambda$

         & \begin{tabular}{c}
               Conformally flat and geometry depending on $\theta $ \\
               For $\lambda \geq 0$, boundary, singular, or quotient of $\bR^{2+k} \times \bS^{2-k}$
         \end{tabular}&~\eqref{eqn:theta-ODE-1}\\ \hline
         
        \eqref{eqn:new-soliton} & \begin{tabular}{c}
            $A = x$ \\
             $B = y$  \\
            $q = \sqrt{x + y}$   
         \end{tabular} &
         
        $\frac12$
        
         & Singular and conformally flat & Explicit\\ \hline

        (\hyperref[prop:metrics-s2-through-s5]{\metricStarStarBar})& \begin{tabular}{c}
            $A = x$ \\
             $B = y$  \\
            $q = \theta(x+y)$   
         \end{tabular} &

            $\lambda$

         & 
         \begin{tabular}{c}
               Geometry depending on $\theta $ \\
               For $\lambda \geq 0$, boundary, singular, or quotient of $\bR^{2+k} \times \bS^{2-k}$ \\
               $\theta = t$ recovers~\eqref{metric:pbnski}; $\theta = \sqrt{t}$ recovers~\eqref{eqn:new-soliton}
         \end{tabular}
         &~\eqref{eqn:theta-ODE-2}\\ \hline

        \eqref{eqn:Soliton-q=exp(x+y)-metric}& \begin{tabular}{c}
            $A = k_1 e^x + k_0$ \\
             $B = k_2 e^y - k_0$  \\
            $q = e^{x+y}$   
         \end{tabular} &

            0

         & Singular metric conformal to product of cigars& Explicit\\ \hline

       \eqref{eqn:conformally-flat-metric}& \begin{tabular}{c}
            $A = 1$ \\
             $B = 1$  \\
            $q$ solves two PDEs
         \end{tabular} &

        $ \lambda$

         & \begin{tabular}{c}
        Pair of PDEs encompassing all conf.~flat toric metrics \\
        Recovers (\hyperref[prop:metrics-s2-through-s5]{\metricStarStar}) when \eqref{eqn:px-s-tilde} and \eqref{eqn:py-s-tilde} are equivalent\\
        Includes \eqref{metric:q=sqrtx2+y2} and $\mr{(}$\hyperref[cor:SexBuffet]{$\mr{\metricSexBuffet}$}$\mr{)}$ for $(a_0, b_0,c) = (1,1,0)$
         \end{tabular}
         & (\ref{eqn:px-s-tilde}, \ref{eqn:py-s-tilde})   \\ \hline

       \eqref{eqn:singular-soliton-metrics}& \begin{tabular}{c}
            $A =$ \hyperref[eqn:singular-soliton-metrics]{$F_{\alpha, c, k_1}(x)$} \\
             $B =$ \hyperref[eqn:singular-soliton-metrics]{$F_{1-\alpha, -c, k_2}(y)$}  \\
            $q = x^\alpha y^{1-\alpha}$   
         \end{tabular} &

        $ 0$

         & \begin{tabular}{c}
         Singular family specializing to Schwarzschild \\
         \end{tabular}& Explicit \\ \hline

        \eqref{metric:q=sqrtx2+y2} & \begin{tabular}{c}
            $A = a_0 - cx^2$ \\
             $B = b_0 + cy^2$  \\
            $q = \sqrt{a_0y^2 + b_0x^2}$   
         \end{tabular} &
        
            $-2a_0b_0$

         & 
         \begin{tabular}{c}
              Singular shrinking for $b_0 < 0$  \\
              Riemannian Schwarzschild when $b_0 = 0$ \\
              Complete volume collapsing expander $b_0 > 0$
         \end{tabular}

         & Explicit \\ \hline
         
         $\mr{(}$\hyperref[cor:SexBuffet]{$\mr{\metricSexBuffet}$}$\mr{)}$ 
         & \begin{tabular}{c}
            $A = a_0 - cx^2$ \\
             $B = b_0 + cy^2$  \\
            $q = yf(x/y)$   
         \end{tabular} &
        
            $\geq 0 $

         & 
         \begin{tabular}{c}
         Ricci solitons with boundary \\
         $c \neq 0$: conf.~scalar flat, boundary at $q=0$ \\
        $c = 0$: conf.~flat, asymptotic end of $c\neq0$
         \end{tabular}
         &~\eqref{eqn:resulting-ODE}\\ \hline

    \end{tabular}
    \caption{Ricci soliton properties determined by the conditions and equations they satisfy.
    The functions $A,B,q$ in the second column are the entries of~\eqref{eqn:PaperMetric} resulting in soliton metrics.
    }
    \label{fig:SolitonEncyclopedia}
\end{table}

\textbf{Acknowledgments.} We are thankful to Tristan Ozuch for suggesting this topic and for many helpful conversations.
Additionally, we are very grateful to the anonymous referee for the detailed feedback and the thoughtful suggestions.

\section{Reduction of the soliton equation}\label{sec:reduction}
Metrics satisfying the geometric conditions~\eqref{eqn:metric-assumptions} above can be expressed as locally conformal to a product of warped product surfaces.
These surfaces $(\Sigma, g)$ have metrics locally expressed as
\begin{equation}\label{eqn:generalized-cylinder}
    g = dr^2 + f(r)^2 ds^2
\end{equation}
where $s$ is the coordinate on a unit circle of intrinsic diameter $\pi$. 
Such metrics occur naturally in surfaces that have constant curvature or are extremal (in the Calabi sense).
The constant curvature cases have warping functions $f \in \left\{ r, R \sin (r/R), R \sinh (r/R) \right\}$ for flat, round, and hyperbolic metrics, respectively.
Since a generalized cylinder metric on $N \times \bR$ has the form $g = dr^2 + g_N$, we obtain a cylindrical metric with a volume collapsing end at $r \to \infty$ by considering  $dr^2 + f(r)^2 g_N$ where $f(r) = o(r)$.
If $g_N = \bS^{n-1}$ the round unit sphere, the cone angle of such a metric at $r = 0$ is given by $\Theta := \lim_{\rho \to 0^+} \frac{\cH^{n-1}(\p B_\rho)}{\rho^{n-1}}$, where $B_\rho$ consists of the points of distance within $\rho$ of the origin with respect to $g$, and $\cH^{n-1}$ denotes the $(n-2)$-dimensional Hausdorff measure.
The angle $\Theta$ must be $\omega_n = \mr{Vol}(\bS^{n-1})$ for the metric $g$ to extend smoothly to the origin.
When $f(0) = 0$, these coordinates can be viewed as polar coordinates, and the $s$-variable is in the unit $\bS^1$.
The cone angle is therefore given by $2\pi f'(0)$, so the smoothness requirement corresponds to $f = r + O(r^2)$ near $r = 0$ and $f^{(2k)}(0) = 0$ for $k \geq 1$, by \cite{verdiani-ziller}*{Theorem 2}.
These are satisfied by all the constant curvature metrics above. 
A singularity is conical if $f = \alpha r + O(r^2)$ where $\alpha \neq 1$, the classical cones are for $\alpha < 1$.

Replacing the coordinate $r$ by $x(r)$ such that $x' = f$ transforms the metric into
\begin{equation}\label{eqn:generalized-cylinder-A}
g_{\Sigma}(A) := \dfrac{dx^2}{A(x)} + A(x) \, ds^2, \qquad A := (f \circ x^{-1})^2
\end{equation}
which will be the standard form for our purposes.
Expression~\eqref{eqn:generalized-cylinder} is equivalent to $g = f_1(r)^2 \, dr^2 + f_2(r)^2 \, ds^2$ using a coordinate $\tilde{r}'(r) = f_1$.
The above coordinate transformation for $x$ shows that any metric of the form $A(x) \, dx^2 + C(x) \, ds^2$ is locally conformal to a generalized cylinder metric~\eqref{eqn:generalized-cylinder-A}.
A metric of the form~\eqref{eqn:generalized-cylinder-A} is determined by $A$ and denoted by $g_{\Sigma}(A)$.
In coordinates $(\tilde{x},\tilde{s}) := \left (a_1 x + a_0, \frac{s}{a_1 \lambda^2} \right)$ for $a_1 \neq 0$, $g_{\Sigma}(A)$ becomes
\begin{equation}\label{eqn:scale-rescale-translate}
\lambda^2 g_{\Sigma} (\tilde{A}) = \lambda^2 \left( \frac{1}{\tilde{A}(\tilde{x})^2} d \tilde{x}^2 + \tilde{A} (\tilde{x})^2 \, d \tilde{s}^2 \right), \qquad \text{where } \; \tilde{A}(\tau) := a_1 \lambda A \left( \frac{\tau-a_0}{a_1} \right).
\end{equation}
These transformations effectively scale the overall metric $g$, and rescale and translate $x$ and $y$.
 
The above constant curvature metrics are characterized by $A(x)$ being a quadratic with quadratic coefficient equal to $-1$ times the curvature, including $A(x)$ being linear or constant for flat metrics.
This property comes from transforming~\eqref{eqn:generalized-cylinder-A} into~\eqref{eqn:generalized-cylinder}, via
\begin{align}
    \frac{dx^2}{a_0 - x^2} + (a_0 - x^2) \, ds^2 &= d \rho^2 + a_0 \sin^2 \rho \, ds^2, \qquad & & \rho = \arccos ({x/\sqrt{a_0}}), \label{eqn:metric:a0-x2} \\
    \frac{dx^2}{x^2 - b_0} + (x^2 - b_0)ds^2 &= d\rho^2 + b_0 \sinh^2\rho \, ds^2, \qquad & & \rho = \on{arccosh} (x/\sqrt{b_0}), \label{eqn:metric:x2-b0} \\
    \frac{dx^2}{x^2} + x^2 \, ds^2 &= d \rho^2 + e^{2 \rho} \, ds^2, \qquad & & \rho = \log x, \label{eqn:metric:x2} \\
    \frac{dx^2}{x^2 + b_0} + (x^2 + b_0) \, ds^2 &= d \rho^2 + b_0 \cosh^2 \rho \, ds^2, \qquad & & \rho = \on{arcsinh}(x / \sqrt{b_0}). \label{eqn:metric:x2+b0}
\end{align}
for $a_0, b_0 > 0$.
As in \cite{verdiani-ziller}*{Theorem 2}, the smoothness of these metrics is characterized as follows: 
\begin{enumerate}[(i)]
    \item the metric~\eqref{eqn:metric:a0-x2} is spherical, of constant positive curvature. 
    The sphere closes up smoothly at the two points $x = \pm \sqrt{a_0}$ where $A$ vanishes, if and only if $a_0 = 1$.
    If $a_0 \neq 1$, those points are conical singularities, including the ``football'' metric.
    \item the metric~\eqref{eqn:metric:x2-b0} corresponds to hyperbolic space, with constant negative curvature, defined for either $x > \sqrt{b_0}$ or $x < - \sqrt{b_0}$.
    The surface closes up smoothly at $x = \pm \sqrt{b_0}$ if and only if $b_0 = 1$, otherwise, it contains a conical singularity.
    \item the metric~\eqref{eqn:metric:x2} is the hyperbolic cusp metric, expressed in horocyclic coordinates. 
    This space is $\bH^2 /\bZ$ expressible as the quotient of the upper half-space by the Kleinian group generated by $z \mapsto z + 1$. 
    \item the metric~\eqref{eqn:metric:x2+b0} represents a complete hyperbolic cylinder of infinite width, whose unique closed geodesic has length equal to $2\pi\sqrt{b_0}$, cf.~\cite{small-eigenvalues}*{Lemma 9}.
\end{enumerate}

The two hyperbolic cylinders in equations~\eqref{eqn:metric:x2-b0} and \eqref{eqn:metric:x2} can be formed by identifying a pair of two-sided geodesic rays in $\bH^2$.
If these geodesics are parallel, i.e., they share a boundary point in $\p_\infty \bH^2$, this quotient achieves the cusp metric \eqref{eqn:metric:x2}.
Otherwise, the distance between the two geodesics is well-defined and equal to $2\pi\sqrt{b_0}$, so the metric is \eqref{eqn:metric:x2}.
The $\bS^1$-action on these hyperbolic cylinders has no fixed points and is unique and rigid, unlike the rotational $\bS^1$-action in \eqref{eqn:metric:x2-b0} which has $\rho = 0$ as a fixed point and depends on a basepoint in $\bH^2$. 
In particular, annular regions $\{ R_1 \leq \rho \leq R_2\}$ for metrics \eqref{eqn:metric:x2-b0}, \eqref{eqn:metric:x2}, and \eqref{eqn:metric:x2+b0} are not isometric, so the property of being part of a hyperbolic cusp or cylinder is metrically distinguishable from being an arbitrary hyperbolic annulus.
This difference is detected in Theorem~\ref{thm:secondary-thm}, in which pieces of hyperbolic cusps are less coercive and do not force soliton metrics to be conformally cylindrical.

More complicated metrics also appear in the class~\eqref{eqn:generalized-cylinder-A}, for example, the cigar soliton, a steady soliton with $\lambda = 0$ and $V = -2u\pu - 2v\pv$.
The cigar metric is commonly expressed as
\begin{equation}\label{eqn:cigar-soliton}
    g = \frac{du^2 + dv^2}{1 + u^2 + v^2} = d r^2+ \tanh^2(r)\,ds^2.
\end{equation}
In the convention~\eqref{eqn:generalized-cylinder-A}, this metric corresponds to $A(x) = 1 - e^{-2x}$.

Metrics satisfying the properties~\eqref{eqn:metric-assumptions} above have an explicit local expression as
\begin{equation}\label{eqn:PaperMetric}
    g = \dfrac{1}{q(x,y)^2} \left( \dfrac{1}{A(x)} dx^2 + \dfrac{1}{B(y)} dy^2 + A(x) \, ds^2 + B(y) \, dt^2 \right).
\end{equation}
We refer to the restriction of~\eqref{eqn:PaperMetric} to the $(x,y)$-plane as the base metric, which is $\frac{1}{q(x,y)^2}\left(\frac{dx^2}{A(x)} + \frac{dy^2}{B(y)}\right)$ over a two-dimensional base $\Omega$.
If $A = B = 1$, then the base metric is simply $q^{-2} g_{\mr{Euc}}$.
In addition to the base surface with coordinates $(x,y)$ for fixed $(s,t)$, we will consider the two surface factors given by coordinates $(x,s)$ or $(y,t)$ while fixing the other pair.
Unlike the base metric, which is invariant under any change in $(s,t)$, these surface metrics vary due to the conformal coupling $q$.
In Section~\ref{subsec:singularityGeometry}, we study the asymptotics and completeness of the solitons by restricting the metric to these surface slices.

A metric satisfying~\eqref{eqn:metric-assumptions} has a maximal open dense chart where $g$ is given in coordinates by~\eqref{eqn:PaperMetric}. 
It is sufficient to understand the soliton behavior of metrics of the form~\eqref{eqn:PaperMetric} and then analyze the geometry at the boundary of the chart.
The metric can close up smoothly or can have an end, singularity, or boundary.
We denote partial derivatives of $q$ with subscripts.
The Ricci tensor of $g$ is diagonal, except for $R_{xy}$, with entries
\allowdisplaybreaks\begin{align}
R_{xx} &= -\dfrac{1}{A} \left( \dfrac{A''}{2} - \dfrac{2 \, q_x A' + 3 \, q_{xx}A + q_yB' + q_{yy}B}{q} + 3 \dfrac{q_x^2 A + q_y^2 B}{q^2} \right), \label{eqn:Rxx} \\
R_{xy} &= 2 \dfrac{q_{xy}}{q}
\label{eqn:Rxy} \\
R_{ss} &= 
- A \left( \dfrac{A''}{2} - \dfrac{2 \, q_xA' + q_{xx}A + q_y B' + q_{yy}B}{q} + 3\dfrac{q_x^2 A+ q_y^2 B}{q^2} \right), \label{eqn:Rss}
\end{align}
and $R_{yy}, R_{tt}$ obtained from $R_{xx}, R_{ss}$, respectively, using the symmetries $x \leftrightsquigarrow y$ and $A \leftrightsquigarrow B$.
\begin{example}\label{ex:Einstein}
    Einstein metrics of the form~\eqref{eqn:PaperMetric} include, up to rescaling and translations of $x,y$:
    \begin{enumerate}[(i)]
        \item Product metrics $g_{\Sigma_1} \oplus g_{\Sigma_2}$: when $q=1$ and $\Sigma_1, \Sigma_2$ are Einstein surfaces with the same Einstein constant, i.e., space forms with the same curvature.
        This corresponds to $A,B$ being quadratic polynomials with the same leading coefficient.
        \item Schwarzschild metric: when $q=x$ (or $q=y$), $A$ is a cubic polynomial, and $B$ is a quadratic polynomial.
        \item \pbnski metric, cf.~\cite{pbnski}: when $q = x \pm y$ and $A,B$ are cubic polynomials of the form
        \begin{equation}\label{metric:pbnski}
        \tag{S0}
            g = \frac{1}{(c_1 x + c_2 y)^2} \left( \frac{dx^2}{a_0 - P(x)} + (a_0 - P(x)) \, ds^2 + \frac{dy^2}{b_0 + P(y)} + (b_0 + P(y)) \, dt^2 \right), \quad P \; \text{cubic}.
        \end{equation}
        The Einstein constant is $\lambda = - 3(a_0 c_1^2 + b_0 c_2^2)$; for $c_1 c_2 =0$, this is a Schwarzschild metric (ii).
    \end{enumerate}
\end{example}
Related constructions of families of Einstein metrics modeled on $\bR^4$, particularly involving the Pleba\'nski - Demina\'nski metric, are studied in \cites{alvarado-fams, plebanski-fam}.
\begin{claim}\label{claim:Einstein}
The only Einstein metrics $g$ in our class of study are the ones in example~\ref{ex:Einstein}.
\end{claim}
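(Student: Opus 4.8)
The plan is to run the Einstein equation $\on{Ric} = \lambda g$ componentwise against the explicit formulas \eqref{eqn:Rxx}--\eqref{eqn:Rss} and show that it forces the conformal factor $q$ to be affine in $(x,y)$, after which $A$ and $B$ are pinned down by ODEs whose only solutions are the (at most cubic) polynomials appearing in Example \ref{ex:Einstein}. The first move is to extract three scalar conditions on $q$. Since $g$ is diagonal in the coordinates of \eqref{eqn:PaperMetric}, the off-diagonal Einstein equation is $R_{xy}=0$, so \eqref{eqn:Rxy} gives $q_{xy}=0$ and hence $q(x,y)=\phi(x)+\psi(y)$. Next I would compare the two diagonal equations sharing the index $x$: writing $g_{xx}=q^{-2}A^{-1}$ and $g_{ss}=q^{-2}A$, the equations $R_{xx}=\lambda g_{xx}$ and $R_{ss}=\lambda g_{ss}$ reduce, after clearing the factors $-A^{-1}$ and $-A$, to the \emph{same} right-hand side $-\lambda/q^2$, so the bracketed expressions in \eqref{eqn:Rxx} and \eqref{eqn:Rss} must agree. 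These brackets differ only in the coefficient of the $q_{xx}A/q$ term ($3$ versus $1$), so their difference yields $-2q_{xx}A/q=0$, i.e. $q_{xx}=0$ on the open dense chart where $A,q\neq 0$. The symmetric comparison of $R_{yy}$ with $R_{tt}$ gives $q_{yy}=0$. Together with $q_{xy}=0$ this forces $q$ to be affine, $q = c_1 x + c_2 y + c_0$.

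With $q$ affine I would normalize using the rescaling/translation freedom $(x,s)\mapsto(a_1x+a_0,s/a_1)$ (and its $y$-analogue) recorded after \eqref{eqn:generalized-cylinder-A}, which transports $q$ to one of three normal forms according to how many of $c_1,c_2$ vanish: (a) $q\equiv 1$; (b) $q=x$; (c) $q=x\pm y$. In each case the surviving diagonal equations become the pair
\begin{align*}
\tfrac12 A'' - \tfrac{2q_xA'+q_yB'}{q} + 3\tfrac{q_x^2A+q_y^2B}{q^2} &= -\tfrac{\lambda}{q^2},\\
\tfrac12 B'' - \tfrac{q_xA'+2q_yB'}{q} + 3\tfrac{q_x^2A+q_y^2B}{q^2} &= -\tfrac{\lambda}{q^2},
\end{align*}
where $q_x=c_1$ and $q_y=c_2$ are now constants. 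Subtracting them removes the coupled $A,B$ terms and leaves the clean relation
\begin{equation*}
\tfrac12\bigl(A''(x)-B''(y)\bigr) = \frac{c_1A'(x)-c_2B'(y)}{c_1x+c_2y+c_0}.
\end{equation*}

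From here I would determine $A,B$ case by case. In (a), $q\equiv 1$ makes each equation read $A''=-2\lambda=B''$, so $A,B$ are quadratics with common leading coefficient $-\lambda$: the product of equal-curvature space forms, Example \ref{ex:Einstein}(i). In (b), $q=x$ (so $c_2=0$) separates the displayed relation into $\tfrac12A''-A'/x=\mu=\tfrac12B''$ for a separation constant $\mu$; the first-order ODE for $A'$ integrates to a cubic (with no linear term) while $B$ is quadratic, and feeding this back into $R_{xx}=\lambda g_{xx}$ fixes $\lambda$ and recovers Schwarzschild, Example \ref{ex:Einstein}(ii). In (c), applying $\partial_x\partial_y$ to the displayed relation gives $c_2A'''=c_1B'''$, forcing $A,B$ to be cubics; matching the remaining coefficients (and normalizing $c_1,c_2$, with the sign of $q=x\pm y$ absorbed by $y\mapsto -y$) identifies them with $A=a_0-P$, $B=b_0+P$ for a single cubic $P$ and yields $\lambda=-3(a_0c_1^2+b_0c_2^2)$, i.e. the \pbnski metric \eqref{metric:pbnski}.

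The main obstacle is the coupled functional equation in case (c): one must show that no non-polynomial or asymmetric solutions survive and that the full system, not merely the difference of the two diagonal equations, forces the shared-cubic \pbnski structure with the stated $\lambda$. The $\partial_x\partial_y$ trick reduces this to polynomial coefficient-matching, but care is needed to track the coordinate normalizations that convert the a priori relation $c_2a_3=c_1b_3$ between leading coefficients into the symmetric form $A=a_0-P$, $B=b_0+P$, and to confirm that the degenerate specialization $c_1c_2=0$ of \eqref{metric:pbnski} reproduces the broader Schwarzschild family of (b) rather than an additional case. A final, mostly bookkeeping, step is to verify that these solutions are genuinely defined on the open dense chart and that the boundary behavior introduces no further Einstein metrics.
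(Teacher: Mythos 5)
Your proposal is correct and follows essentially the same route as the paper's proof: vanishing of $R_{xy}$ gives $q_{xy}=0$, equating the $(xx)$ and $(ss)$ (resp. $(yy)$ and $(tt)$) components of $g^{-1}\on{Ric}$ gives $q_{xx}=q_{yy}=0$, hence $q$ is affine and normalizes to $1$, $x$, $y$, or $x\pm y$, after which $A,B$ are determined by direct computation. The case-by-case ODE analysis you spell out (including the separation constant in the Schwarzschild case and the $\partial_x\partial_y$ trick forcing $A,B$ cubic in the Pleba\'nski case) is precisely the content the paper compresses into ``$A$ and $B$ are computed directly in all these cases.''
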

\begin{proof}
Since the metric $g$ of~\eqref{eqn:PaperMetric} is diagonal, if it is Einstein, the Ricci tensor must be as well. 
This requires $R_{xy} = 0$, meaning $q_{xy} = 0$, so $q = f_1 (x) + f_2(y)$.
The Einstein equation forces $(g^{-1} \on{Ric})^x{}_x = (g^{-1} \on{Ric})^s{}_s$, which implies $q_{xx} = 0$.
The same argument shows that $q_{yy} = 0$.
Therefore, $q$ must be affine, and up to translating and rescaling, one of $1, x,y$, or $x+y$.
$A$ and $B$ are computed directly in all these cases.
\end{proof}
It follows that any soliton metric~\eqref{eqn:PaperMetric} with $q$ not of the form $ax+by+c$ will not be Einstein.
\begin{claim}\label{claim:appropriate-V}
If a metric $g$ as above is a Ricci soliton $\on{Ric} + \frac{1}{2} \cL_V g = \lambda g$, there exists some $\tilde{V}$ with local expression $\tilde{V} = \tilde{V}^x(x,y) \partial_x + \tilde{V}^y(x,y) \partial_y$ that satisfies the soliton equation.
\end{claim}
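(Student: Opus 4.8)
The plan is to use the torus symmetry of \eqref{eqn:PaperMetric} to first remove the dependence of $V$ on $s,t$ by averaging, and then to show the off-diagonal soliton equations force the torus components of $V$ to be Killing, hence discardable.

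\emph{Step 1: reduce to a torus-invariant field by averaging.} Since the coefficients $A,B,q$ in \eqref{eqn:PaperMetric} depend only on $(x,y)$, the flows $\Phi_{(\sigma,\tau)}$ of $\ps$ and $\pt$ are isometries, generating the $T^2$-action from assumption (i) of \eqref{eqn:metric-assumptions}. Consequently $g$ and $\on{Ric}$ are $T^2$-invariant (consistent with the formulas \eqref{eqn:Rxx}--\eqref{eqn:Rss} being functions of $(x,y)$ alone), and so is $S := \lambda g - \on{Ric}$. For each such isometry $\phi$, naturality of the Lie derivative gives $\phi^*(\cL_V g) = \cL_{\phi^* V}(\phi^* g) = \cL_{\phi^* V}\,g$, so pulling back the soliton equation $\on{Ric} + \tfrac12\cL_V g = \lambda g$ shows that $\phi^* V$ solves it as well. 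Averaging over $T^2$ with normalized Haar measure, and using that $\cL_\bullet\, g$ is linear in its vector-field argument and commutes with the integral, the field $\bar V := \int_{T^2}\phi^* V\, d\mu(\phi)$ again satisfies $\on{Ric} + \tfrac12\cL_{\bar V} g = \lambda g$ and is $T^2$-invariant by construction. Thus we may assume $V = V^x\px + V^y\py + V^s\ps + V^t\pt$ with all components depending only on $(x,y)$.

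\emph{Step 2: kill the torus components.} The tensor $S$ is diagonal apart from its $xy$-entry, so its mixed components $S_{xs}, S_{ys}, S_{xt}, S_{yt}$ vanish. For the $T^2$-invariant $V$, the coordinate formula $(\cL_V g)_{ab} = V^c\,\partial_c g_{ab} + g_{cb}\,\partial_a V^c + g_{ac}\,\partial_b V^c$, together with $g$ being diagonal and independent of $s,t$, yields
\[
(\cL_V g)_{xs} = g_{ss}\,\px V^s = \frac{A}{q^2}\,\px V^s, \qquad (\cL_V g)_{ys} = \frac{A}{q^2}\,\py V^s .
\]
Setting these equal to $2S_{xs}=2S_{ys}=0$ forces $\px V^s = \py V^s = 0$, so $V^s$ is constant; the identical computation in the $t$-variable gives $V^t$ constant.

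\emph{Step 3: conclude.} A constant-coefficient combination $W := V^s\ps + V^t\pt$ is a sum of the Killing fields $\ps,\pt$, hence Killing, so $\cL_W g = 0$. Writing $V = \tilde V + W$ with $\tilde V := V^x(x,y)\px + V^y(x,y)\py$ gives $\cL_V g = \cL_{\tilde V} g$, so $\on{Ric} + \tfrac12\cL_{\tilde V} g = \lambda g$, and $\tilde V$ has the desired form. The only point requiring genuine care is the averaging in Step 1, which needs a compact symmetry group; this is exactly what the toric hypothesis (i) supplies, and one must check that $\cL_\bullet\, g$ passes through the group average, which follows from its linearity in the vector field. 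Everything else is a direct computation from \eqref{eqn:PaperMetric}.
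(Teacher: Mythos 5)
Your proof is correct, but it takes a genuinely different route from the paper's. The paper never averages over the torus: it writes $g = e^{2r}\tilde{g}$ conformal to a product, decomposes $V = V_0 + a_1\xi_1 + a_2\xi_2$ against the commuting Killing fields $\xi_1,\xi_2$, and manufactures the replacement field as a solution of the first-order ODE \eqref{eqn:ODE-for-V} via Picard--Lindel\"of; the real work there is showing that the initial condition $g(\tilde{V},\xi_i)=0$ propagates, which uses the bracket identities (that $[\xi_i,[\xi_j,V]]$ is Killing for $\tilde{g}$) and the fact that the right-hand side of \eqref{eqn:soliton-expanded} is diagonal away from $\{\xi_1,\xi_2\}^{\perp}$. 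Only your endgame has a counterpart in the paper, where the $(xs)$ component of the soliton equation is used to kill $s$- and $t$-dependence. You instead exploit compactness of the torus directly: Haar-averaging the pulled-back soliton equations is legitimate since $V\mapsto \cL_V g$ is linear, the right-hand side $2(\lambda g - \on{Ric})$ is $T^2$-invariant, and differentiation passes through the orbit integral by smoothness and compactness of the group; then the vanishing of $R_{xs},R_{ys},R_{xt},R_{yt}$ and of the corresponding metric entries forces $V^s,V^t$ to be constants, whose contribution is Killing and can be discarded. The trade-off is this: your argument is shorter and more transparent, but it genuinely requires the symmetry group to be a compact torus (assumption (i) of \eqref{eqn:metric-assumptions}) and needs $V$ to be defined along entire orbits --- true here, since the soliton field is global and the chart \eqref{eqn:PaperMetric} is a union of orbits; the paper's ODE argument uses only the infinitesimal symmetry, so it is purely local and would survive in settings where the two commuting Killing fields integrate to a non-compact ($\bR^2$-type) action rather than a torus.
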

\begin{proof}
The conditions~\eqref{eqn:metric-assumptions} on the metric imply the existence of two orthogonal, commuting Killing vector fields $\xi_1, \xi_2$ that encode the cohomogeneity two condition and the toric structure of $g$.
Writing the metric as locally conformal to the product $\tilde{g} = g_{\Sigma_1} \oplus g_{\Sigma_2}$ of two warped product surfaces by $g = e^{2\varphi} \tilde{g}$ for $\varphi(x,y)$ a conformal change respecting the toric structure; in terms of the expression~\eqref{eqn:PaperMetric} for the metric, we have $\varphi(x,y) = -\log q(x,y)$.
This means that $\xi_1, \xi_2$ are Killing for $\tilde{g}$, as well.
For any vector field $V$,
\[
\cL_V g = \cL_V(e^{2\varphi} \tilde{g}) = e^{2\varphi} \left( 2V(r) \tilde{g} + \cL_V \tilde{g} \right)
\]
whereby $\cL_{\xi_i} g = \cL_{\xi_i} \tilde{g}=0$ implies that $\xi_i(\varphi) = 0$.
The Ricci tensors $\on{Ric}(g), \on{Ric}(\tilde{g})$ are related by
\begin{equation}\label{eqn:conformal-ricci}
\on{Ric}(g) = \on{Ric}(\tilde{g}) - 2 \left( \on{Hess}_{\tilde{g}} \varphi - d\varphi \otimes d\varphi \right) - \left( \Delta_{\tilde{g}} \varphi + 2 \, |d\varphi|_{\tilde{g}}^2 \right) \tilde{g}
\end{equation}
in dimension four, due to $g = e^{2\varphi} \tilde{g}$.
The Ricci soliton equation for $(g,V)$ in terms of $(\tilde{g},\varphi)$ becomes
\begin{equation}\label{eqn:soliton-expanded}
    V(\varphi) \tilde{g} + \frac{1}{2} \cL_V \tilde{g} = - e^{-2\varphi} \on{Ric}(\tilde{g}) + 2 e^{-2\varphi} \left( \on{Hess}_{\tilde{g}} \varphi - d\varphi \otimes d\varphi \right) + e^{-2\varphi}\left( \Delta \varphi + 2 \, |d\varphi|^2 + \lambda e^{2\varphi} \right) \tilde{g}.
\end{equation}

We decompose the vector field $V$ as $V = V_0 + a_1 \xi_1 + a_2 \xi_2$, where $a_i := g(V,\xi_i)$ and $g(V_0,\xi_i) = 0$, so $V_0$ is orthogonal to the Killing fields $\xi_i$.
We will produce from $V$ a new vector field $\tilde{V}$ satisfying the soliton equation~\eqref{eqn:soliton-expanded} and being orthogonal to the $\xi_i$, so $g(\tilde{V},\xi_i) = 0$.
The first property means
\begin{equation}\label{eqn:ODE-for-V}
    2 \tilde{V}(\varphi) \tilde{g} + \cL_{\tilde{V}} \tilde{g} = 2 V_0 (\varphi) \tilde{g} + \cL_V \tilde{g}, 
\end{equation}
where $V_0(\varphi) = V(\varphi)$ due to $\xi_i(\varphi) = 0$.
Equation~\eqref{eqn:ODE-for-V} is a first-order ODE in (the components of) $\tilde{V}$, so a local solution exists for given initial data, by the Picard-Lindel\"of theorem. 
We need only ensure that given initial data requiring $g(\tilde{V},\xi_i) = 0$, the (unique) resulting solution to~\eqref{eqn:ODE-for-V} preserves this property.

Since Killing vector fields are infinitesimal generators of isometries, which preserve the Ricci tensor, we also have $\cL_{\xi_i} \on{Ric}(\tilde{g}) = 0$.
Therefore, since $\xi_i(\varphi) = 0$, we can apply~\eqref{eqn:ODE-for-V}, showing
\[
2 \, \xi_i (V_0(\varphi)) \tilde{g} + \cL_{[\xi_i,V]} \tilde{g} = 0, \qquad i=1,2.
\]
In particular, applying $\cL_{\xi_j}$ to this shows that the vector fields $[\xi_i,[\xi_j,V]]$ are Killing for $\tilde{g}$.
Since $[\xi_i,V] = [\xi_i,V_0] + \xi_i(a_1)\xi_1 + \xi_i(a_2) \xi_2$, the symmetric tensors $\cT_i := \cL_{[\xi_i,V]} \tilde{g} - \cL_{[\xi_i,V_0]} \tilde{g}$ have
\[
\cT_i(X_1,X_2) = \tilde{g} \left(X_1, X_2(\xi_i(a_1)) \xi_1 + X_2(\xi_i(a_2)) \xi_2 \right) + \tilde{g} \left(X_1(\xi_i(a_1)) \xi_1 + X_1(\xi_i(a_2)) \xi_2, X_2 \right)
\]
so they vanish on $\{ \xi_1, \xi_2 \}^{\perp}$.
We only required that $[\xi_i,V] - [\xi_i,V_0] \in \on{span} \{ \xi_1, \xi_2\}$, which is also the case for $V - V_0$; therefore, the tensor $\cT := \cL_V \tilde{g} - \cL_{V_0} \tilde{g}$ has the same property.

For $\tilde{g}$ the product metric, all tensors on the right-hand side of equation~\eqref{eqn:soliton-expanded} are diagonal except possibly in $\{ \xi_1, \xi_2 \}^{\perp}$, so $\cL_V \tilde{g}$ is, too. 
Thus, there is a solution $\tilde{V}$ to~\eqref{eqn:ODE-for-V} with initial data $g(\tilde{V},\xi_i) = 0$ that preserves this property for short time, as desired.
Using the local conformal presentation of $g$, we can cover the manifold by local patches on which this result holds. 
When $g(\tilde{V},\xi_i) = 0$, we choose local coordinates $(x,y,s,t)$ for which $\xi_1, \xi_2$ are $\ps, \pt$.
We obtain a metric $g$ of local presentation~\eqref{eqn:PaperMetric}, so $\tilde{V} = \tilde{V}^x \px + \tilde{V}^y \py$.
The $(xs)$ component of the soliton equation gives $\ps \tilde{V}^x = 0$; likewise, $\tilde{V}^x, \tilde{V}^y$ have no $s$- or $t$-dependence.
The equation is thus satisfied locally for some $\tilde{V} = \tilde{V}^x(x,y) \px + \tilde{V}^y (x,y) \py$, as claimed.
\end{proof}
Without loss of generality, we may now assume that $V = V^x(x,y)\px + V^y(x,y)\py$ is of the simplified form.
For such a $V$, the tensor $\cL_V g$ becomes diagonal, except for $(\cL_V g)_{xy}$, with entries 
\allowdisplaybreaks\begin{align}
(\cL_V g)_{xx} &= \dfrac{2 (\partial_x V^x) A - V^x A'}{q^2 A^2} - \dfrac{2}{A}\dfrac{V^x q_x + V^y q_y}{q^3} \label{eqn:Lvg-xx}, \\
(\cL_V g)_{xy} &= \dfrac{(\partial_x V^y)A + (\partial_y V^x)B}{q^2 AB} \label{eqn:Lvg-xy}, \\
(\cL_V g)_{ss} &= \dfrac{V^x A'}{q^2} - 2A\dfrac{V^x q_x + V^y q_y}{q^3} \label{eqn:Lvg-ss},
\end{align}
and $(\cL_V)_{yy}, (\cL_V g)_{tt}$ obtained similarly using the symmetries $x \leftrightsquigarrow y, A \leftrightsquigarrow B$.

\subsection{Reduction via auxiliary functions}

We introduce the auxiliary functions
\begin{equation}\label{eqn:SxSy}
    S^x := 2 q q_x + \dfrac{V^x}{A}, \qquad S^y := 2 q  q_y + \dfrac{V^y}{B}.
\end{equation}
The $(xy)$-component of the soliton equation $\on{Ric} + \frac{1}{2} \cL_V g = \lambda g$ is equivalent to the relation
\begin{equation}\label{eqn:pySxpxSy}
    \partial_y S^x + \partial_x S^y = 4 q_xq_y.
\end{equation}
Rewriting the Ricci soliton equation in the form
\begin{equation}\label{eqn:g-inverse-Ricci-soliton}\tag{$\ddagger$}
g^{-1} \on{Ric} + \frac{1}{2} g^{-1} \cL_V g = \lambda,
\end{equation}
where the metric $g$ is diagonal, requires the equalities
\allowdisplaybreaks\begin{align}
    &(g^{-1} \on{Ric})^x{}_x + \frac{1}{2} \left( g^{-1} \cL_V g \right)^x{}_x = (g^{-1} \on{Ric})^s{}_s + \frac{1}{2} \left( g^{-1} \cL_V g \right)^s{}_s, &\iff \quad  \px S^x = 2 q_x^2, \label{eqn:pxSx} \\
    &(g^{-1} \on{Ric})^y{}_y + \frac{1}{2} \left( g^{-1} \cL_V g \right)^y{}_y = (g^{-1} \on{Ric})^t{}_t + \frac{1}{2} \left( g^{-1} \cL_V g \right)^t{}_t, &\iff \quad  \py S^y = 2 q_y^2. \label{eqn:pySy}
\end{align}
Equating the $(ss)$ and $(tt)$ components of equation~\eqref{eqn:g-inverse-Ricci-soliton} is equivalent to
\begin{equation}\label{eqn:A''B''-PDE}
    A'' - \dfrac{S^x}{q^2} A' = B'' - \dfrac{S^y}{q^2} B' =: w(x,y).
\end{equation}
Provided that the above equations hold, the Ricci soliton equation is reduced to the one for the $(xx)$-component; in terms of $S^x, S^y$, this is
\begin{equation}\label{eqn:RicciSoliton-Last-Sx}
    - \dfrac{\lambda}{q^2} = \dfrac{1}{2} w(x,y) - \dfrac{q_xA' + q_{xx}A + q_y B' + q_{yy}B}{q} + \dfrac{q_x^2 A+ q_y^2 B}{q^2} + \dfrac{S^x q_x A + S^y q_y B}{q^3}.
\end{equation}
The soliton equation therefore reduces to solving equation~\eqref{eqn:RicciSoliton-Last-Sx}.
This equivalent representation of the soliton PDE is summarized in the following corollary.
\begin{corollary}\label{corollary:exist-Sx-Sy}
    A metric $g$ of the form~\eqref{eqn:PaperMetric} is a Ricci soliton if and only if there exist functions $S^x, S^y$ such that the system of equations~\eqref{eqn:pySxpxSy} --~\eqref{eqn:RicciSoliton-Last-Sx} is satisfied.
\end{corollary}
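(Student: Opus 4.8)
The plan is to read this corollary as a componentwise reorganization of the soliton equation, with the auxiliary functions $S^x, S^y$ of \eqref{eqn:SxSy} playing the role of a change of unknowns that linearizes the bookkeeping. By Claim \ref{claim:appropriate-V} I may assume $V = V^x(x,y)\px + V^y(x,y)\py$, so that both $\on{Ric}$ and $\cL_V g$ are diagonal apart from their $(xy)$ entries, as recorded in \eqref{eqn:Rxx}--\eqref{eqn:Rss} and \eqref{eqn:Lvg-xx}--\eqref{eqn:Lvg-ss}. Writing the equation in the $(1,1)$ form \eqref{eqn:g-inverse-Ricci-soliton} and using that $g$ is diagonal, the single tensor identity splits into exactly five independent scalar conditions: vanishing of the $(xy)$ entry, and the requirement that each of the four diagonal entries $(xx),(yy),(ss),(tt)$ equal $\lambda$.

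For the forward direction I would substitute $V^x = A\bigl(S^x - 2qq_x\bigr)$ and $V^y = B\bigl(S^y - 2qq_y\bigr)$, obtained by inverting \eqref{eqn:SxSy}, into these five conditions and match them one at a time. The off-diagonal condition $R_{xy} + \tfrac12 (\cL_V g)_{xy} = 0$ becomes, after clearing the conformal factor and expanding $\py S^x$ and $\px S^y$, precisely \eqref{eqn:pySxpxSy}. Taking the difference of the $(xx)$ and $(ss)$ entries cancels $A''$ along with the $q_{yy}$, $q_x$ and $q_y$ contributions, leaving the $q_{xx}$ terms, which recombine with the $\cL_V g$ terms into $\px S^x = 2 q_x^2$; this is \eqref{eqn:pxSx}, and \eqref{eqn:pySy} follows from the $x \leftrightsquigarrow y$, $A \leftrightsquigarrow B$ symmetry. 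The $(xx)$--$(yy)$ difference isolates the dependence on $A'', B'', S^x, S^y$ and forces the two sides to agree, which is the definition of $w$ in \eqref{eqn:A''B''-PDE}; and the remaining single $(xx)$ equation, rewritten through $S^x, S^y$ and $w$, is \eqref{eqn:RicciSoliton-Last-Sx}. Since the three chosen differences together with this one absolute equation recover all four diagonal entries being equal to $\lambda$, the five equations are equivalent to the soliton equation, not merely consequences of it.

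For the converse, given functions $S^x, S^y$ solving \eqref{eqn:pySxpxSy}--\eqref{eqn:RicciSoliton-Last-Sx}, I define $V$ by the same inversion $V^x := A\bigl(S^x - 2qq_x\bigr)$, $V^y := B\bigl(S^y - 2qq_y\bigr)$, which is smooth wherever $A, B, q$ are and $A, B \neq 0$. Because each manipulation above is an algebraic equivalence rather than a one-way implication, reading the identities backward shows that $(g,V)$ satisfies $\on{Ric} + \tfrac12 \cL_V g = \lambda g$.

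The main obstacle is purely computational bookkeeping: confirming that the substitution \eqref{eqn:SxSy} collapses the second-order $q$-derivatives appearing in \eqref{eqn:Rxx}--\eqref{eqn:Rss} into the first-order quantities $\px S^x$, $\py S^y$, and $\py S^x + \px S^y$, and that the three diagonal differences plus one absolute equation form an independent set equivalent to the four diagonal components. I expect no conceptual difficulty beyond carefully tracking the conformal factor $q^{-2}$ through the raising of indices.
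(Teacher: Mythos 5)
Your proposal is correct and is essentially the paper's own argument: the paper proves the corollary precisely by invoking Claim \ref{claim:appropriate-V} to reduce $V$ to the form $V^x(x,y)\px + V^y(x,y)\py$ and then matching the $(xy)$ component to \eqref{eqn:pySxpxSy}, the $(xx)$--$(ss)$ and $(yy)$--$(tt)$ differences of \eqref{eqn:g-inverse-Ricci-soliton} to \eqref{eqn:pxSx}--\eqref{eqn:pySy}, the $(xx)$--$(yy)$ difference to \eqref{eqn:A''B''-PDE}, and the remaining $(xx)$ equation to \eqref{eqn:RicciSoliton-Last-Sx}, with the substitution \eqref{eqn:SxSy} invertible in both directions since $A,B>0$ on the domain. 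Nothing essential differs from the paper's derivation.
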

Differentiating~\eqref{eqn:pySxpxSy} with $\py$ (or $\px$) and substituting $\py \px S^y$ from~\eqref{eqn:pySy} (likewise for $S^x$) implies
\begin{equation}\label{eqn:p2ySx}
\partial_i \partial_j S^k = 4 \, q_k q_{ij}, \qquad \text{for } \; i,j,k \in \{ x,y \}.
\end{equation}
For example, $\py^2 S^x = 4q_x q_{yy}$.
Differentiating~\eqref{eqn:p2ySx} and using~\eqref{eqn:pxSx} (resp.~\eqref{eqn:pySy}) shows that
\begin{equation}\label{eqn:HessianCompatibility}
    q_{xx}q_{yy} = (q_{xy})^2.
\end{equation}
Therefore, the conformal factor $q$ for a Ricci soliton metric~\eqref{eqn:PaperMetric} has everywhere singular Hessian.
Let $D^2q$ denote the Euclidean Hessian of $q$ whose entries are the ordinary second derivatives $q_{xx}, q_{xy}, q_{yy}$ of $q$. 
We say that $D^2q$ has zeroes if any second-order derivative vanishes.
Otherwise, we write $D^2 q \neq 0$ if no second-order derivative is zero.

We can consider $S^x, S^y$ as solutions to the system~\eqref{eqn:pySxpxSy} --~\eqref{eqn:pySy} for a given $q$.
Given such a solution, any other solution $(\tilde{S}^x, \tilde{S}^y)$ to this system (for fixed $q$) is expressed as
\begin{equation}\label{eqn:S-S-tilde}
    \tilde{S}^x = S^x + C_0 y + C_1, \qquad \tilde{S}^y = S^y - C_0 x - C_2
\end{equation}
for constants $C_i$. 
By~\eqref{eqn:pxSx} and~\eqref{eqn:p2ySx}, the function $\tilde{S}^x - S^x$ is constant in $x$ and at most linear in $y$, yielding the first equality, similarly for $y$ using~\eqref{eqn:pySxpxSy}.
When a solution to this system exists, the auxiliary functions $S^x, S^y$ will be determined among the admissible representatives $\tilde{S}^x, \tilde{S}^y$ by equations~\eqref{eqn:A''B''-PDE} and~\eqref{eqn:RicciSoliton-Last-Sx}.

\subsection{ODE system for the soliton vector field}

The system of equations~\eqref{eqn:pySxpxSy} --~\eqref{eqn:RicciSoliton-Last-Sx} can be viewed as a coupled system of equations for $q,A,B$, since the functions $S^x, S^y$ are completely determined by $q$, up to a three-dimensional vector space of linear functions described in~\eqref{eqn:S-S-tilde}.
We exploit the coupling of the equation~\eqref{eqn:HessianCompatibility} for $q$ with those for the single-variable functions $A$ and $B$ in~\eqref{eqn:A''B''-PDE} and~\eqref{eqn:RicciSoliton-Last-Sx}.
The equations~\eqref{eqn:A''B''-PDE} and~\eqref{eqn:RicciSoliton-Last-Sx} impose an additional compatibility condition on the functions $q, A, B$ comprising the metric, derived by viewing~\eqref{eqn:A''B''-PDE} --~\eqref{eqn:RicciSoliton-Last-Sx} as a system with unknowns $S^x, S^y$.
We first prove general results describing the solvability of a coupled system of ODEs, which can be applied to the system obtained from $S^x, S^y$ as well as to the classification of warped product metrics of the form~\eqref{eqn:PaperMetric}.

\begin{lemma}\label{lemma:system-alpha-beta-h}
Given two-variable functions $\alpha_i(x,y), \beta_i(x,y)$, consider the following ODEs for $F(x)$:
\begin{align}
        \alpha_2 F' + \alpha_1 F + \alpha_0 &= 0 , \label{eqn:decouple-1} \\
        \beta_3 F'' + \beta_2 F' + \beta_1 F + \beta_0 &= 0 \label{eqn:decouple-2}.
    \end{align}
\begin{enumerate}[(i)]
    \item There exists a function $F(x)$ satisfying~\eqref{eqn:decouple-1} if and only if
    \begin{equation}\label{eqn:A-existence}
        \py \left( \frac{\py (\alpha_0 / \alpha_2)}{\py (\alpha_1 / \alpha_2)} \right) = 0 \qquad \text{and} \qquad \px \left( \frac{\py (\alpha_0 / \alpha_2)}{\py (\alpha_1 / \alpha_2)} \right) + \frac{\alpha_1}{\alpha_2} \frac{\py (\alpha_0 / \alpha_2)}{\py (\alpha_1 / \alpha_2)} = \frac{\alpha_0}{\alpha_2},
    \end{equation}
    in which case $F(x)$ is given by
    \begin{equation}\label{eqn:F(x)-solution-ODE}
        F(x) = - \frac{\py (\alpha_0 / \alpha_2)}{\py (\alpha_1 / \alpha_2)}
    \end{equation}
    provided that this is non-degenerate.
     \item The ODE system~\eqref{eqn:decouple-1} and~\eqref{eqn:decouple-2} admits a solution $F(x)$ if and only if in addition to condition~\eqref{eqn:A-existence}, the solution to $F(x)$ in~\eqref{eqn:F(x)-solution-ODE} agrees with
      \begin{equation}\label{eqn:couple-decouple-single-variable}
    F(x) = - \frac{\alpha_0 \alpha_1 \beta_3 + \alpha_0 (\px \alpha_2) \beta_3 - (\px \alpha_0) \alpha_2 \beta_3 - \alpha_0 \alpha_2 \beta_2 + \alpha_2^2 \beta_0}{\alpha_1^2 \beta_3 + \alpha_1 (\px \alpha_2) \beta_3 - (\px \alpha_1) \alpha_2 \beta_3 - \alpha_1 \alpha_2 \beta_2 + \alpha_2^2 \beta_1}.
    \end{equation}
\end{enumerate}
\end{lemma}
\begin{proof}
Part (i) comes from applying $\py$ to
\eqref{eqn:decouple-1} and solving the resulting $2 \times 2$ system with unknowns $F,F'$:
\[
\begin{cases}
    0&= \alpha_2 F' + \alpha_1 F + \alpha_0 \\
    0&= (\py \alpha_2) F' + (\py \alpha_1) F +\py \alpha_0 
\end{cases}
\implies 
    \begin{cases}
    F(x) &= - \frac{\py(\alpha_0/\alpha_2)}{\py(\alpha_1/\alpha_2)} \\ 
    F'(x) &= - \frac{\py(\alpha_0/\alpha_1)}{\py(\alpha_2/\alpha_1)} \\ 
    \end{cases}.
\]
We see that the conditions~\eqref{eqn:A-existence} are equivalent to $F$ being a function of only $x$, and its derivative satisfying its equation from solving the system.
Furthermore, a solution $F(x)$ to the ODE~\eqref{eqn:decouple-1} exists even if the expression~\eqref{eqn:F(x)-solution-ODE} is degenerate from these formulas.

Part (ii) comes from differentiating the expression of $F'$ from part (i) and plugging the result into~\eqref{eqn:decouple-2}.
This yields an equation of the form $\gamma_1 F + \gamma_0 = 0$ whose solution gives \eqref{eqn:couple-decouple-single-variable}, which must be compatible with part (i).
Conversely, the equivalence of part (i) guarantees that the expression $F(x)$ indeed produces a single-variable function satisfying~\eqref{eqn:decouple-1}; by construction, it also satisfies~\eqref{eqn:decouple-2}.
\end{proof}
\begin{corollary}\label{cor:FGsystem}
    Consider two-variable functions $u(x,y), v(x,y)$.
    The existence of non-zero functions $F(x), G(y)$ satisfying the equality
    \begin{equation}\label{eqn:A-B-u-v-system}
        F'(x) - u F(x) = G'(y) - v G(y)
    \end{equation}
    is equivalent to a triple of fourth-order differential equations in $u,v$.
    For each expression, there is a fixed $N \in \bN^*$ such that all coefficients involve summands of the form $\Pi_{r=1}^k \partial_x^{p_r} \partial^{q_r}_y u \cdot \Pi_{s=1}^{\ell} \partial_x^{p'_s} \partial_y^{q'_s} v$, with $\sum_r (p_r + q_r) + \sum_s (p'_s + q'_s) + (k+\ell) = N$.
\end{corollary}
\begin{proof}
The pair of equations
\[
(a): \quad \px \left( \frac{\px (F' - u F)}{\px v} \right) = 0, \qquad (b): \quad - \py \left( \frac{\px (F' - u F)}{\px v} \right) = F' - u F - v \frac{\px (F' - u F)}{\px v}
\]
is equivalent to the existence of functions $G(y), \tilde{G}(y)$ such that
\[
(a): \quad F'(x) - u F(x) = \tilde{G}(y) - v G(y), \qquad (b): \quad \tilde{G}(y) =  G'(y), \quad \iff~\eqref{eqn:A-B-u-v-system},
\]
with $G = - \frac{\px (G' - u G)}{\px v}$.
Equation~\eqref{eqn:A-B-u-v-system} is therefore equivalent to the solvability (in $F$) of a system
\begin{equation}\label{eqn:solvability-new-system}
(a): \quad \beta_3 F^{(3)} + \beta_2 F'' + \beta_1 F' + \beta_0 F = 0, \qquad (b): \quad \alpha_2 F'' + \alpha_1 F' + \alpha_0 F = 0.
\end{equation}
For each $j$, the coefficients $\alpha_j, \beta_j$ of~\eqref{eqn:solvability-new-system} are sums of terms of the form $\Pi_{r=1}^k \partial_x^{p_r} \partial^{q_r}_y u \cdot \Pi_{s=1}^{\ell} \partial_x^{p'_s} \partial_y^{q'_s} v$, which satisfy the equation $\sum_r (p_r + q_r) + \sum_s (p'_s + q'_s) + (k+\ell) = 5-j$.
This means that the sum of the total number of factors and the total number of derivatives in each of the products comprising the coefficients $\alpha_j, \beta_j$ of~\eqref{eqn:solvability-new-system} is fixed and equal to $5-j$.
For instance, $\alpha_2 = \px \py v + v (\px v)$ and $\beta_3 = \px v$.
An analogous argument as in Lemma~\ref{lemma:system-alpha-beta-h} shows that the solvability of~\eqref{eqn:solvability-new-system} is equivalent to three second-order equations in $\alpha_i, \beta_j$ with terms $\Pi_{r=1}^k \partial_x^{p_r} \partial_y^{q_r} \alpha_{i_r} \cdot \Pi_{s=1}^{\ell} \partial_x^{p'_s} \partial_y^{q'_s} \beta_{j_s}$ such that in each expression, $k+\ell$ (the total number of factors) and $\sum_r (p_r + q_r) + \sum_s (p'_s + q'_s)$ (the total number of derivatives) are fixed.
\end{proof}

Following Corollary~\ref{corollary:exist-Sx-Sy}, we view the soliton equations as a system for $S^x, S^y$ via equations~\eqref{eqn:A''B''-PDE} and~\eqref{eqn:RicciSoliton-Last-Sx}. 
We rewrite the latter for brevity as
\begin{align}
     A q_x S^x &+ \left( Bq_y - \frac{1}{2} B' q \right) S^y = q \left( \gamma - \frac{1}{2} B'' q^2 \right), \qquad \text{where} \label{eqn:Last-Sx-Sy-gamma} \\
     \gamma(q,A,B, \lambda) &:= q \left( q_xA' + q_{xx}A + q_y B' + q_{yy}B \right) - \left( q_x^2 A+ q_y^2 B \right) - \lambda. \label{eqn:gamma-A,B,q}
\end{align}
In what follows, the dependence of $\gamma$ on $q,A,B,\lambda$ will be suppressed.

The solution $(S^x, S^y)$ of this $2 \times 2$ system can be computed as
\begin{equation}\label{eqn:Sx-system-solution-general}
S^x = q \frac{ \left( Bq_y - \frac{1}{2} B' q \right) (A'' - B'') q + B' \left( \gamma - \frac{1}{2} B'' q^2 \right)}{AB' q_x + A'B q_y - \frac{1}{2} A' B' q}, \qquad S^y = q \frac{A' \left( \gamma - \frac{1}{2} B'' q^2 \right) - A (A'' - B'') q q_x}{AB' q_x + A'B q_y - \frac{1}{2} A' B' q}.
\end{equation}
We denote by $D$ the denominator $D := AB' q_x + A'B q_y - \frac{1}{2} A' B' q$.
The functions $S^x, S^y$ are determined uniquely unless $D = 0$.
By Corollary~\ref{corollary:exist-Sx-Sy}, the metric $g$ is a soliton if and only if equations~\eqref{eqn:pySxpxSy} --~\eqref{eqn:pySy} are satisfied.
Imposing $\px S^x = 2 q_x^2$ and $\py S^y = 2 q_y^2$, we reduce~\eqref{eqn:pySxpxSy} to a second-order condition in $q$ by computing the terms of
\begin{align*}
    \frac{2}{A' B'} (A' q_x - B' q_y)^2 &= 2 \frac{A'}{B'} q_x^2 + 2 \frac{B'}{A'} q_y^2 - 4 q_x q_y = \frac{A'}{B'} \px S^x + \frac{B'}{A'} \py S^y - (\py S^x + \px S^y).
\end{align*}
Computing the right-hand side using expressions~\eqref{eqn:Sx-system-solution-general} and multiplying by $\frac{D}{q}$, we arrive at
\allowdisplaybreaks{
\begin{equation}\label{eqn:second-compatibility-equation}
\begin{split}
    &\left( \frac{A^{(3)}}{B'} + \frac{B^{(3)}}{A'} \right) q D + \frac{1}{2} q^2 \left( (A'')^2 + (B'')^2 \right) - (A'' + B'') ( q q_{xx} A + q q_{yy} B - \lambda) \\
    & + (3 A A'' - A B'') q_x^2 + (3 B B'' - A'' B) q_y^2 + 2 (A'' - B'') \left( \frac{B}{B'} A' -  \frac{A}{A'} B'\right) q_x q_y - 2 q (A' A'' q_x + B' B'' q_y) \\
    &= \frac{2}{A' B'} \left( A' q_x - B' q_y \right)^2 \left( A B' \frac{q_x}{q} + A' B \frac{q_y}{q} - \frac{1}{2} A' B' \right).
\end{split}
\end{equation}}
Together with the conditions $\px S^x = 2 q_x^2$ and $\py S^y = 2 q_y^2$ computed using~\eqref{eqn:Sx-system-solution-general}, the relation~\eqref{eqn:second-compatibility-equation} gives the system of equations that is equivalent to the Ricci soliton property of the metric~\eqref{eqn:PaperMetric}, by Corollary~\ref{corollary:exist-Sx-Sy}.

\subsection{Proof outline}

The paper is organized around the proof of Theorems~\ref{thm:main-theorem} and~\ref{thm:secondary-thm} as follows.
We will first examine the situation where the Hessian $D^2q$ has zeroes in Section~\ref{subsec:D2qhasZeroes}, where Proposition~\ref{prop:d2q=0} classifies such solitons.
In Section~\ref{subsection:degeneracy-resolvent-system}, we treat the degenerate case of expression~\eqref{eqn:Last-Sx-Sy-gamma} when the denominator $D := A B' q_x + A' B q_y - \frac{1}{2} A' B' q$ vanishes.

Next, Section~\ref{sec:homogeneous} classifies the conformally cylindrical solitons, which correspond to the conformal factor $q$ being a homogeneous function of degree $1$, so the condition $\det D^2q = 0$ is automatically satisfied. 
Our classification proceeds in two steps, depending on whether or not the surface factors $\Sigma_i$ have constant curvature; we deduce that either neither or both factors have constant curvature.
In the former case, the soliton metrics are classified in Proposition~\ref{prop:homogeneous-rigidity}, while in the latter, their Gauss curvatures must be opposite, and the only possible soliton metrics are the conformally flat and scalar flat families from Section~\ref{subsect:solitons-conformal-S2H2}.

Our results show that in many settings, the Ricci solitons produced in Section~\ref{sec:homogeneous} are essentially the only ones of the form~\eqref{eqn:PaperMetric}.
We collect these steps and complete the proofs of Theorems~\ref{thm:main-theorem} and~\ref{thm:secondary-thm} in Section~\ref{section:proofs}.

\section{Rigidity results for cohomogeneity two solitons}\label{section:rigidity-results}

We begin by obtaining various restrictions on the behaviors of a cohomogeneity two Ricci soliton metric satisfying the properties~\eqref{eqn:metric-assumptions}.
For the results that follow, no assumption of completeness is required for the metric $g$; the only properties used are the local analytic solvability of the expression~\eqref{eqn:PaperMetric} for the metric.

\subsection{Degenerate Monge-Amp\`ere equation}

Recall the domain of definition of the metric $g = g(q,A,B)$ is $\Omega \subseteq \bR^2$ given as a maximal connected component of $\left\{ A(x) > 0 \right\} \cap \left\{ B(y) > 0 \right\} \cap \left\{ q(x,y) \neq 0 \right\}$.
The intersection of the first two sets forms the product of two intervals, possibly (half-)infinite; equation~\eqref{eqn:HessianCompatibility} then requires $q$ to solve the degenerate homogeneous Monge-Amp\`ere equation on $\Omega$, which can be viewed as a free boundary-type condition.
By itself, this equation is flexible and allows for various parametric families of solutions that describe developable surfaces, which we obtain as the main result of this subsection.

The connection between the Monge-Amp\`ere equation and the geometry of warped products was previously observed in three dimensions, in the work of G\'alvez, Hauswirth, Jim\'enez, and Mira \cites{galvez-hauswirth-mira, galvez-jimenez-mira}.
In the case of the finitely punctured plane $\Omega$, (i.e., $A, B$ are everywhere positive), the possibilities for $q$ were studied by G\'alvez and Nelli \cite{galvez-nelli}, advancing classical results of Pogorelov and Hartman--Nirenberg.

We first characterize all local solutions to the PDE $\det D^2 q = 0$. 
For this, we start by recalling some standard facts from first-order equations.
\begin{lemma}\label{lemma:level-set-composition}
   Let $u,v$ be real-valued functions defined on a domain $\Omega \subseteq \bR^2$.
    Suppose that
    \begin{equation}\label{eqn:u-v-gradient-condition}
        \frac{\py u}{\px u} = \frac{\py v}{\px v}.
    \end{equation}
    If $\nabla u \neq (0,0)$ in a punctured neighborhood of $p$, then there exists some $B_r(p) \subseteq \Omega$ and a single-variable function $\theta$ such that $v = \theta \circ u$ on $B_r(p)$.
\end{lemma}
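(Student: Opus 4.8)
The plan is to recognize the hypothesis \eqref{eqn:u-v-gradient-condition} as the statement that the Jacobian of the map $(u,v)$ vanishes identically, and then to flatten the level sets of $u$ by a local change of coordinates, so that the desired relation $v = \theta \circ u$ becomes the assertion that $v$, read in the new coordinates, is independent of the transverse variable. First I would clear denominators in \eqref{eqn:u-v-gradient-condition} to rewrite it as
\[
\px u \, \py v - \py u \, \px v = 0
\]
on $\Omega$, i.e. $\nabla u$ and $\nabla v$ are everywhere parallel. I would treat the generic case $\nabla u(p) \neq 0$ first and recover the stated conclusion at $p$ afterward. Assuming $\nabla u(p) \neq 0$, after possibly interchanging the roles of $x$ and $y$, we have $\px u(p) \neq 0$, hence $\px u \neq 0$ on some ball $B_r(p)$ by continuity.

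On this ball the map $\Phi(x,y) := (u(x,y),\, y)$ has Jacobian determinant $\px u \neq 0$, so by the inverse function theorem it is a diffeomorphism onto a neighborhood of $\Phi(p)$, with new coordinates $(\xi,\eta) = (u, y)$. The key step is to compute that $\tilde v := v \circ \Phi^{-1}$ is independent of $\eta$. Writing $x = x(\xi,\eta)$ for the inverse map, so that $u(x(\xi,\eta),\eta) = \xi$, implicit differentiation in $\eta$ gives $\partial_\eta x = -\py u / \px u$, whence
\[
\partial_\eta \tilde v = \px v \cdot \partial_\eta x + \py v = \frac{1}{\px u}\left( \px u \, \py v - \py u \, \px v \right) = 0
\]
by the hypothesis. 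Therefore $\tilde v$ depends only on $\xi = u$, so there is a single-variable function $\theta$ with $\tilde v(\xi,\eta) = \theta(\xi)$, i.e. $v = \theta \circ u$ on $B_r(p)$. This resolves the case $\nabla u(p) \neq 0$.

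The main obstacle is the degenerate possibility $\nabla u(p) = 0$ that the hypothesis permits while only assuming $\nabla u \neq 0$ on the \emph{punctured} neighborhood. In that case I would shrink $B_r(p)$ so that $\nabla u \neq 0$ throughout $U^* := B_r(p) \setminus \{p\}$, apply the argument above around each point of $U^*$ to obtain locally defined functions $\theta$, and then glue them: since $U^*$ is connected and $u$ is continuous, the image $u(U^*)$ is an interval, and on overlaps the two local expressions represent the same $v$ as a function of the common value of $u$, hence agree wherever $u$ is non-constant. This produces a single $\theta$ on $u(U^*)$, and continuity of $u$ and $v$ then extends the relation $v = \theta \circ u$ across $p$, completing the proof. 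The only points requiring care are the bookkeeping in the implicit differentiation showing $\Phi$ is a genuine local diffeomorphism, and verifying the compatibility of the local $\theta$'s on overlaps in the degenerate case.
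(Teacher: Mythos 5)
The paper offers no proof of this lemma at all---it is recalled as a ``standard fact'' immediately before Corollary \ref{corollary:apxr+bpyr=c}---so the only question is whether your argument stands on its own. Your second paragraph, the case $\nabla u(p) \neq (0,0)$, is the standard straightening argument and is correct: $\Phi = (u,y)$ is a local diffeomorphism where $\px u \neq 0$, and $\partial_\eta \tilde v = \left( \px u \, \py v - \py u \, \px v \right)/\px u = 0$. The one routine point you gloss over is that $\partial_\eta \tilde v = 0$ yields $\tilde v = \theta(\xi)$ only on a region whose vertical slices are connected, so you should first shrink $\Phi(B_r(p))$ to a ball about $\Phi(p)$ and then pull back.

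The genuine gap is in your final paragraph, the degenerate case $\nabla u(p) = (0,0)$, which your reading of the hypothesis (the cross-multiplied form $\px u\,\py v = \py u\,\px v$) permits. The gluing you propose does not address the real obstruction: a level set $\{u = c\}$ inside the punctured ball can have several connected components that are not joined by any chain of overlapping charts, and your local argument pins $v$ down only component by component; the claim that the local $\theta$'s ``agree wherever $u$ is non-constant'' is unjustified exactly when the charts are disjoint, and continuity at $p$ controls only the single level set through $p$, not the nearby ones. In fact, under your reading the degenerate case is false. Take $p = (0,0)$, $u = y^2 - x^2$, and
\[
v = \begin{cases} e^{1/u}, & u<0,\ x>0, \\ -e^{1/u}, & u<0,\ x<0, \\ 0, & u \ge 0. \end{cases}
\]
This $v$ is smooth (all of its derivatives vanish on the lines $y = \pm x$ and at the origin), satisfies $\px u \, \py v - \py u \, \px v \equiv 0$ everywhere (off the lines $v$ is locally a function of $u$; on them $\nabla v = 0$), and $\nabla u \neq 0$ on the punctured plane; yet for every small $c < 0$ the level set $\{u = c\}$ meets both sectors, where $v$ takes the distinct values $\pm e^{1/c}$, so $v = \theta \circ u$ holds on no neighborhood of $p$. (The literal ratio form of \eqref{eqn:u-v-gradient-condition} excludes this example only because it tacitly forces $\px v \neq 0$; but then it also forces $\nabla u(p) \neq 0$, making the puncture clause vacuous.) So the degenerate case should be dropped rather than patched: the lemma is correct, and is the statement actually used in the paper, only at points where $\nabla u(p) \neq (0,0)$---e.g.\ in Corollary \ref{corollary:apxr+bpyr=c} one has $\nabla u = (1/a, -1/b)$, which never vanishes---and that is precisely the case your second paragraph already settles.
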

This happens because if the functions have everywhere parallel gradients, their level sets agree locally.
\begin{corollary}\label{corollary:apxr+bpyr=c}
Suppose that the function $u(x,y)$ satisfies 
\begin{equation}\label{eqn:r-a-b-c}
    a(x) \, \px u + b(y) \, \py u = c(x)
\end{equation}
for some single-variable functions $a,b \neq 0$ and $c$. 
Then, $u$ is given by
\begin{equation}\label{eqn:r-solution-a-b-c}
   u = \theta \left( f(x) - g(y) \right) + \int \frac{c(x)}{a(x)} \, dx, \qquad \text{where } \; f(x) := \int \frac{1}{a} \, dx, \quad g(y) := \int \frac{1}{b} \, dy, 
\end{equation}
for some single-variable function $\theta$. 
\end{corollary}
Indeed, the function $\tilde{u} = u - \int \frac{c}{a} \, dx$ satisfies $a \, \px \tilde{u} + b \, \py \tilde{u} = 0$, so Lemma~\ref{lemma:level-set-composition} can be applied.
    \begin{theorem}\label{thm:solutions-to-monge-ampere}
        Let $q(x,y)$ satisfy the homogeneous Monge-Amp\`ere equation $\det D^2 q = 0$ on an open set $\Omega \subseteq \bR^2$.
        For any point $p \in \Omega$, there exists a ball $B_r(p) \subseteq \Omega$ such that $q$ has one of the following forms:
        \begin{enumerate}[(i)]
            \item $q(x,y) = cx + h(y)$ or $q(x,y) = h(x) + cy$, for a single-variable function $h$ and a constant $c$;
            \item $q(x,y) = \theta(x+y) + cx$, for a single-variable function $\theta$ and a constant $c$, possibly after rescaling $x,y$; or
            \item $q(x,y) = x \Theta(v) - ( \int \psi' \Theta' )(v)$,
            for single-variable functions $\psi, \Theta$ and a function $v(x,y)$ defined by
            \begin{equation}\label{eqn:q-defined-implicitly}
                \psi (v) = y + x v, \qquad \text{and} \qquad v = \frac{q_{xy}}{q_{yy}} = \frac{q_{xx}}{q_{xy}}.
            \end{equation}
        \end{enumerate}
    \end{theorem}
    \begin{proof}
    If some second-order derivative $\partial^2_{ij} q$ of $q$ vanishes, then $\det D^2q = 0$ implies that at least two such derivatives vanish.
    This means that either $q(x,y) = h(x) + cy$, or $q(x,y) = cx + h(y)$, for some function $h$ and constant $c$.
    Conversely, any such function satisfies $\det D^2 q = 0$, so in what follows we assume that $D^2 q \neq 0$.
    Defining the function $v = \frac{q_{xy}}{q_{yy}} = \frac{q_{xx}}{q_{xy}}$, we differentiate the equality $q_{xy}^2 = q_{xx} q_{yy}$ to obtain
        \begin{equation}\label{eqn:qxy-qyy-v-transport}
        \px \left( \frac{q_{xy}}{q_{yy}} \right) = \frac{q_{xy}}{q_{yy}} \cdot \py \left( \frac{q_{xy}}{q_{yy}} \right), \qquad \text{so} \qquad v_x = v v_y.
        \end{equation}
        The relation~\eqref{eqn:qxy-qyy-v-transport} implies that the function $v$ satisfies the inviscid Burgers' equation $v_x - v v_y = 0$.
        The method of characteristics shows that $v$ is either constant or is implicitly defined by
        \begin{equation}\label{eqn:v-implicit-transport}
        \psi(v) = y + x v
        \end{equation}
        for a single-variable function $\psi$.
        We first consider the situation when $q_x = v q_y$, meaning that $\frac{q_{xy}}{q_x} = \frac{q_{yy}}{q_y}$.
        In this case, we could write $\frac{q_x}{q_y} = a(x)$ for some single-variable function, whereby Lemma~\ref{lemma:level-set-composition} would imply that $q = \theta (\int a(x) \, dx + y)$, for some single-variable function $\theta$.
        Then, $q_x = \theta' a$ and $q_y = \theta'$, with $q_{yy} = \theta''$ so the Monge-Amp\`ere condition $q^2_{xy} = q_{xx} q_{yy}$ becomes $\theta' \theta'' a' = 0$.
        Since $D^2 q \neq 0$, we have $\theta' \theta'' \neq 0$, which forces $a = C$, meaning that $q(x,y) = \theta(Cx+y)$.
        
        In all other cases, we have $D^2 q \neq 0$ and $q_x - vq_y \neq 0$, so we can produce a well-defined function
        \begin{equation}\label{eqn:homo-first-step}
        u(x,y) := \dfrac{q q_{yy}}{q_x q_{yy} - q_yq_{xy}}, \qquad \text{hence } \qquad q = u \, q_x - uv \, q_y.
        \end{equation}
        Differentiating~\eqref{eqn:homo-first-step} and using $q_{xy}^2 = q_{xx} q_{yy}$, we obtain
        \begin{equation}\label{eqn:differentiated-eqns}
            q_x = u_x q_x - \px (uv) q_y \qquad \text{and} \qquad
            q_y = u_y q_x - \py (uv) q_y.
        \end{equation}
        Therefore, multiplying the second equation of~\eqref{eqn:differentiated-eqns} by $v$ and subtracting the resulting relations gives
        \[
        (u_x - v u_y - 1) (q_x - v q_y) = 0.
        \]
        Our previous discussion shows that $q_x - v q_y \neq 0$ due to $D^2 q \neq 0$, so either $u_x - 1 = u_y = 0$, or we can write
        \begin{equation}\label{eqn:qxy-qyy-ux-uy}
            \frac{\px (u-x)}{\py (u-x)} = \frac{u_x - 1}{u_y} = v(x,y) = \frac{q_{xy}}{q_{yy}} = \frac{v_x}{v_y}.
        \end{equation}
        If $v = c$ is constant, then $\frac{q_{xy}}{q_{yy}} = c \neq 0$ since $D^2 q \neq 0$, so Lemma~\ref{lemma:level-set-composition} implies that $q_y = \theta_2 (cx + y)$, for some single-variable function $\theta_2$.
        Applying Lemma \ref{lemma:level-set-composition} to the equality $\frac{q_{yy}}{q_{xy}} = \frac{q_{xy}}{q_{xx}}$ enables us to write $q_x = \phi(q_y)$, so $q_x = \theta_1 (cx+y)$ for a single-variable function $\theta_1$.
        Since $q_{xy} = \py \theta_1 = \px \theta_2$, we have $\frac{\theta'_1}{\theta'_2} = c$, so $\theta_1(t) = c \theta_2(t) + c_0$ for some constant $c_0$.
        Since $\frac{\px (u - x)}{\py (u-x)} = \frac{u_x - 1}{u_y} = c$, we also obtain $u(x,y) = x + \theta(cx + y)$ for a single-variable function $\theta$.
        Plugging these equalities into~\eqref{eqn:homo-first-step} yields
        \[
            q(x,y) = u(x,y) (q_x - v q_y) = c_0 \left( x + \theta(cx + y) \right),
        \]
        whereby $q$ has the form described in case (ii).
        For $v$ a non-constant function satisfying the equality $\frac{\px (u-x)}{\py(u-x)} = \frac{v_x}{v_y}$ from~\eqref{eqn:v-implicit-transport}, Lemma~\ref{lemma:level-set-composition} implies that $u=x - \theta(v)$, for a single-variable function $\theta$.
        Also, $\frac{q_{xy}}{q_{yy}} = v = \frac{v_x}{v_y}$ implies that $q_x = \theta_1(v)$ and $q_y = - \Theta'(v)$, for single-variable functions $\theta_1, \Theta$.
        Now, $q_{xy} = \py q_x = \px q_y$ gives $\frac{\theta'_1(v)}{- \Theta''(v)} = v$, so $\theta'_1(t) = - t \Theta''(t) = (\Theta - t \Theta')'$ and $\theta_1(t) = \Theta - t \Theta'$.
        Using these relations in~\eqref{eqn:homo-first-step}, we obtain
        \begin{equation}\label{eqn:q-plug-in}
            q = (x - \theta(v)) \Theta(v).
        \end{equation}
        Computing $q_y = - \Theta'(v)$ from~\eqref{eqn:q-plug-in} and writing $v_y = \frac{1}{\psi'(v) - x}$ from~\eqref{eqn:v-implicit-transport}, we obtain $\psi' \Theta' =(\theta \Theta)'$.
        Upon integration, $\theta \Theta = \int \psi' \Theta'$ and $q = x \Theta(v) - (\int \psi' \Theta') (v)$, so $q$ has the form described in case (iii) with $v = \frac{q_{xy}}{q_{yy}}$.

        It remains to consider the case when~\eqref{eqn:qxy-qyy-ux-uy} is not well-defined, meaning that $u_x - 1 = u_y =0$.
        We deduce that $u(x,y) = x  - x_0$ and equation~\eqref{eqn:differentiated-eqns} implies that $\py \left( \frac{q_{xy}}{q_{yy}} \right) = - \frac{1}{x-x_0}$.
        This means that $\frac{q_{xy}}{q_{yy}} = - \frac{a(x) + y}{x-x_0}$ for a single-variable function $a(x)$, and using this result in~\eqref{eqn:homo-first-step} gives
        \[
        q = (x-x_0) q_x + (a(x) + y) q_y
        \]
        so $(x- x_0) q_{xx} + a'(x) q_y + (a(x) + y) q_{xy} = 0$.
        Since $\frac{q_{xx}}{q_{xy}} = \frac{q_{xy}}{q_{yy}} = - \frac{a(x) + y}{x-x_0}$, we also have $(x - x_0) q_{xx} + (a(x) + y) q_{xy} = 0$, whereby combining the two equations gives $a' = 0$.
        This means that $q = (x - x_0) q_x + (y - y_0) q_y$ for some $x_0, y_0$, so it is homogeneous after translating $x,y$.
        Note that this class of functions belongs in case (iii): when $\Phi = x_0 \Theta$, we have $\Phi'/\Theta' = x_0$, so $\int \Phi'/\Theta' = x_0 t + y_0$ for some $y_0$.
        Thus, equation~\eqref{eqn:q-defined-implicitly} becomes $v = - \frac{y-y_0}{x-x_0}$, making the expression for $q$ into $q(x,y) = x \Theta(v) - \Phi(v) = (x-x_0) \Theta ( - \frac{y-y_0}{x-x_0} )$.
        This is precisely the homogeneity condition after translation.
    \end{proof}

    Using the framework of case (iii) for solutions of the Monge-Amp\`ere equation, we can write
    \begin{align}
        q_x &= \Theta(v) - v \Theta'(v), \qquad q_y = - \Theta'(v) \label{eqn:qx,qy-from-MA-1} \\
        q_{xx} &= -\Theta''(v) v^2 v_y, \qquad q_{xy} = - \Theta''(v) v v_y, \qquad q_{yy} = - \Theta''(v) v_y. \label{eqn:qx,qy-from-MA-2}
    \end{align}
    By~\eqref{eqn:qx,qy-from-MA-1}, we have $q(x,y) = C(x-x_0)$ (resp.~$q(x,y) = C(y-y_0)$) if and only if $\Theta$ is constant (resp.~linear).
    Case (iii) includes the homogeneous $q$, motivating the classification of conformally cylindrical solitons in Section~\ref{sec:homogeneous}, but is too broad of a condition to fully classify as currently understood.
\subsection{Zeroes in \texorpdfstring{$D^2 q$}{Hess q} and warped products}\label{subsec:D2qhasZeroes}

Motivated by the first case of Theorem~\ref{thm:solutions-to-monge-ampere} on solutions of the homogeneous Monge-Amp\`ere equation $\det D^2 q = 0$, we first consider Ricci soliton metrics $g$ for which $D^2 q$ has zeroes. 
This includes the warped product metrics of the form $g_{\Sigma_1} \oplus \phi(x)^2 g_{\Sigma_2}$.
In subsequent sections, we will assume that none of the second derivatives of $q$ identically vanish. 
\begin{proposition}\label{prop:d2q=0}
    A Ricci soliton metric $g$ of the form~\eqref{eqn:PaperMetric} where $q$ has some vanishing second-order partial derivative must be, up to rescaling and translation of $x,y$, one of:
    \begin{enumerate}[(a)]
        \item $q=1$ and $g$ is a product of Ricci soliton surface metrics $g_{\Sigma_1} \oplus g_{\Sigma_2}$ with the same constant $\lambda$, or of a flat factor with an Einstein (constant sectional curvature) surface;
        \item $q=x$ (or $q=y$) and $g$ is the Schwarzschild metric;
        \item $q=x \pm y$ and $g$ is the \pbnski metric~\eqref{metric:pbnski};
        \item the warped product metric with a $(y,s)$-factor of constant curvature, given by
        \begin{equation}\label{eqn:warped-product-metric}\tag{\metricfifthOrderh}
            g(x,y) = \frac{1}{h(x)^2} \left( \frac{dx^2}{A(x)} \, dx^2 + A(x) \, ds^2 + \frac{dy^2}{b_2 y^2 + b_1 y + b_0} + (b_2 y^2 + b_1 y + b_0) \, ds^2 \right),
        \end{equation}
        where we define $H(x) := \int (h')^2 \, dx$ and 
        \begin{equation}\label{eqn:a-from-mathematica}\tag{$\mathbf{A}_{b_i, C_i, \lambda}$}
A(x) := \frac{b_1 C_2 (h(h')^2 + h^2 h'') + 2 h^2 h'' (\lambda + b_2 h^2)}{- 2h^3 (h'')^2 + h (h')^2 (- 2 (h')^2 + h h'') + (2H + C_1) ( (h')^3 + h h' h'') + h^3 h' h^{(3)} }
\end{equation}
    for constants $C_1, C_2$ with $b_2 C_2 = 0$, and $h(x)$ satisfying the equation
    \begin{equation}\label{eqn:fifth-order-ODE-for-h}\tag{$\mathbf{h}_\lambda$}
        A' = \left( \frac{2H + C_1}{h^2} + \frac{h'}{h} - \frac{h''}{h'} \right) A + \frac{\lambda + \frac{1}{2} b_1 C_2 + b_2 h^2}{h h'}
    \end{equation}
    for $A(x)$ defined in~\eqref{eqn:a-from-mathematica}.
    For any given choice of the constants $\{ b_1, b_2, C_1, C_2, \lambda \}$ with $b_2 C_2 = 0$, the resulting equation~\eqref{eqn:fifth-order-ODE-for-h} is equivalent to a fifth-order ODE for $h$. 
    \end{enumerate}
\end{proposition}
\begin{proof}
If some second-order derivative $\p^2_{ij} q$ of $q$ vanishes, equation~\eqref{eqn:HessianCompatibility} implies that at least two such derivatives vanish. 
Assuming by symmetry that $q_{yy} = q_{xy} = 0$, this requires $q = h(x) + cy$ for some function $h$ and constant $c$.
By rescaling and translating $x,y$ as in~\eqref{eqn:generalized-cylinder-A}, if $A$ is linear or quadratic, in all cases it is sufficient to examine $A(x) = \pm x^2 + a_0$ or $x$, likewise for $B$.

Multiplying both sides of~\eqref{eqn:A''B''-PDE} by $q^2 = (h(x) + cy)^2$ gives the explicit equation
\begin{equation}\label{eqn:A''B''-h+cy}
    (h(x) + cy)^2 A'' - S^x A' = (h(x) + cy)^2 B'' - S^y B'.
\end{equation}
The relations~\eqref{eqn:pySxpxSy} can be integrated to solve for $S^x, S^y$, for some constants $C_i$ to be determined, as
\begin{equation}\label{eqn:SxSy-h(x)+cy}
S^x = 2 H(x) + C_0 y + C_1 \quad \text{where } \; H(x) := \int (h')^2 \, dx, \qquad  S^y = 2c^2 y + 4 c h - C_0 x - C_2.
\end{equation}
\noindent\textbf{Step 1: when $h$ is linear.} 
In this case, the metric $g$ is one of (a) -- (c) above: it is a product of soliton surfaces, or it is Einstein and does not support Ricci solitons with non-Killing vector field $V$. 
Let $h(x) = a_1 x + a_0$.

If $a_1 = c = 0$, then $q=1$ and $g$ is a product metric, so $g_{\Sigma_1}, g_{\Sigma_2}$ are any two soliton surface metrics of the form~\eqref{eqn:generalized-cylinder-A} with the same $\lambda$, which we now classify.
First,~\eqref{eqn:SxSy-h(x)+cy} gives $S^x = C_0 y + C_1, S^y = - C_0 x - C_2$.
The operation $\px^2$ applied to~\eqref{eqn:A''B''-h+cy} yields the formula for $A(x)$ as $C'_3 e^{(C_0 y + C_1)x}$ plus a quadratic term, and similarly applying $\py^2$ yields the formula for $B(y)$ as $C''_3 e^{- (C_0 x + C_2)y}$ plus a quadratic term, by the $x \leftrightsquigarrow y$ symmetry. 
Therefore, $C_0 \neq 0$ and $C'_3 = C''_3 = 0$, or $C_0 = 0$.
In the former case, matching coefficients for the quadratics in~\eqref{eqn:A''B''-h+cy} shows that $A,B$ must be constant, so $g$ is the flat metric and $\lambda = 0$, by~\eqref{eqn:RicciSoliton-Last-Sx}. 
If $C_0=0$, applying $\px$ in~\eqref{eqn:A''B''-h+cy} implies $A(x) = k_1 e^{C_1 x} + a_1 x + a_0$, likewise for $B$, and $2 \lambda = a_1 C_1 = b_1 C_2$. 
Note that $C_i =0$ means replacing the corresponding exponential by a quadratic, so also $a_1 C_1$ by $- 2a_2$, likewise for $B$.
The equation~\eqref{eqn:RicciSoliton-Last-Sx} is reduced to $A'' - C_1 A' = - 2 \lambda$, so it is always satisfied.
The resulting solutions are:
\begin{equation}\label{eqn:q=1-soliton}
\begin{split}
    g &= g_{\Sigma_1} (A_{C_1,\lambda}(x)) \oplus g_{\Sigma_2}(B_{C_2, \lambda}(y)), \qquad V = C_1 A_{C_1,\lambda}(x) \partial_x + C_2 B_{C_2, \lambda}(y) \partial_y, \\
    A_{C_1,\lambda}(x) &= \begin{cases}
    k_1 e^{C_1 x} + 2 \lambda x/C_1  + a_0, & C_1 \neq 0, \\
    - \lambda x^2 + a_1 x + a_0, & C_1 = 0
    \end{cases}, \qquad \text{likewise for } \; B_{C_2,\lambda}(y).
\end{split}
\end{equation}
The factor $\Sigma_i$ is, respectively: (i) Einstein (constant curvature) if $C_i = 0$; (ii) non-trivial Ricci soliton if $C_i k_i \neq 0$; (iii) flat if $C_i \neq 0, k_i = 0$, reparametrized as $\frac{1}{x}dx^2 + x \, ds^2$ with radial vector field $\px$.
The product (i)$\times$(i) gives the only Einstein metric, as the product of Einstein surfaces of the same sectional curvature; the product (iii)$\times$(iii) gives the Gaussian soliton on $\bR^4$ for arbitrary $\lambda$.
The metrics of case (ii) form a family of incomplete solitons specializing, for $\lambda = 0$, to the steady cigar soliton of~\eqref{eqn:cigar-soliton}.

If $(a_1,c) \neq (0,0)$, assume by symmetry that $a_1 \neq 0$; then,~\eqref{eqn:A''B''-h+cy} becomes the variable-coefficient ODE
\begin{equation}\label{eqn:dalembert-form}
\left( x + b_0(y) \right)^2 A''(x) - \left( 2 x + b_1(y) \right) A'(x) = \tilde{b}_2(y) x^2 + \tilde{b}_1(y) x + \tilde{b}_0(y). 
\end{equation}
This has a one-dimensional family of solutions $A'(x)$ expressed as a fixed quadratic polynomial plus any scalar multiple of the solution of the homogeneous equation $(x+ b_0(y))^2 A''(x) = (2x + b_1(y)) A'(x)$.
The latter solution is of the form $k(y) (x+b_0(y))^2 \exp \left( \frac{2 b_0(y) - b_1(y)}{x+b_0(y)} \right)$; here $b_0(y) = \frac{cy+a_0}{a_1}$ and $b_1(y) = \frac{C_0 y + C_1}{a_1^2}$, since $a_1 \neq 0$.
Eliminating the $y$-dependence requires $k=0$ or $2 b_0(y) = b_1(y)$; this makes the homogeneous solution for $A'$ quadratic, so $A$ is a cubic. 
If $c \neq 0$, by symmetry $B$ is a cubic with coefficients determined by~\eqref{eqn:dalembert-form} as corresponding to the \pbnski metric.
If $c=0$ (but $a_1 \neq 0$), the same argument for $B$ applies, but now the left side of~\eqref{eqn:SxSy-h(x)+cy} is linear in $y$, so $B$ is quadratic and $g$ is the Schwarzschild metric.
Computing $V^x, V^y$ from~\eqref{eqn:SxSy-h(x)+cy} and~\eqref{eqn:SxSy} implies $\cL_V g = 0$, so the solitons are trivial in these Einstein cases.

\smallskip
\noindent\textbf{Step 2: when $h'' \neq 0$.}
Applying $\px^2 \py^2$ to~\eqref{eqn:A''B''-h+cy} and using~\eqref{eqn:p2ySx}, we obtain
\begin{equation}\label{eqn:h(x)+cy-2}
    \dfrac{c^2}{h''(x)} A^{(4)}(x) = \left( c y + \frac{\px^2 (h(x)^2)}{2 h''(x)} \right) B^{(4)}(y), \qquad \text{since } \; h'' \neq 0.
\end{equation}
If $B^{(4)}(y)$ is constant, then $c=0$, hence $B$ is a cubic or $\px^2 (h(x)^2) = 0$ as well. 

\smallskip
\noindent\textbf{Step 2(a): if $c=0$.}
Equation~\eqref{eqn:h(x)+cy-2} becomes $\frac{1}{2h''} \px^2 (h^2) B^{(4)} = 0$, meaning that $h(x) = \sqrt{D_1 x + D_0}$ with $D_1 \neq 0$, or $B^{(4)} = 0$.
In the first sub-case, applying $\px \py^2$ to~\eqref{eqn:A''B''-h+cy} gives $D_1 B^{(4)} + C_0 B^{(3)} = 0$, so $B(y) = k \exp \left( - \frac{C_0}{D_1} y \right) + \sum_{i=0}^2 b_i y^i$.
If $k \neq 0$, matching coefficients in~\eqref{eqn:A''B''-h+cy} gives $C_0 D_0 = C_2 D_1$ and $A$ quadratic as above.
Equation~\eqref{eqn:A''B''-h+cy} has a term $\frac{D_1}{2} \log |D_1 x + D_0|$ in $S^x$, with $D_1 \neq 0$, that cannot be matched in~\eqref{eqn:A''B''-h+cy}, so there is no solution.
If $k=0$, we again have that $A$ is a quadratic.
Applying $\px^2 \py$ to~\eqref{eqn:A''B''-h+cy} produces a logarithm term that cannot be matched if $D_1 \neq 0$, so there are again no solutions.

In the second sub-case, $B^{(4)} = 0$ implies that $B = \sum_{i=0}^3 b_i y^i$ is a cubic. 
Using~\eqref{eqn:SxSy-h(x)+cy}, we can view~\eqref{eqn:A''B''-h+cy} as an equation in $y$, which implies that
\begin{equation}\label{eqn:match-bs-2}
3 b_3 (C_0 x + C_2) y^2 = 0 \qquad \text{and} \qquad \left( 6 b_3 h^2 + C_0 A' + 2 b_2 (C_0 x + C_2) \right) y = 0.
\end{equation}
This forces $B$ to be a quadratic, since either $b_3 =0$, or $C_0 = C_2 = 0$ and the second equality gives $b_3 = 0$.
The remaining terms imply that $A$ is also a quadratic, or $C_0 = 0$.
In the former case, we must have $A(x) = - b_2 x^2 + a_1 x + a_0$ and $B(y) = b_2 y^2 + b_1 y + b_0$, so $C_2 = 0$ and~\eqref{eqn:RicciSoliton-Last-Sx} gives $h(x) = k \sqrt{b_2 x^2 - a_0}$ where $h'' \neq 0$ means $k a_0 b_2 \neq 0$.
However,~\eqref{eqn:A''B''-h+cy} requires $(h')^2 = \px (h^2/x)$, which is never satisfied for such $h(x)$, so we do not have solutions.
Finally, if $C_0 = 0$, we use~\eqref{eqn:match-bs-2} to see that $b_2 C_2 = 0$, so $B$ is linear or $S^y = 0$.
The equations 
\eqref{eqn:A''B''-h+cy},~\eqref{eqn:SxSy-h(x)+cy}, and~\eqref{eqn:RicciSoliton-Last-Sx} give coupled ODEs for $A,h$:
\allowdisplaybreaks\begin{align}
    A'' &= \frac{2H + C_1}{h^2} A' + 2 b_2 + \frac{b_1 C_2}{h^2}, \quad & & \text{where } \; H(x) := \int (h')^2 \, dx, \label{eqn:coupled-1} \\
    A' &= \left( \frac{2 H + C_1}{h^2} + \frac{h'}{h} - \frac{h''}{h'} \right) A + \frac{\lambda + \frac{1}{2} b_1 C_2 + b_2 h^2}{h h'}, \label{eqn:coupled-2}
\end{align}
where $b_2 C_2 = 0$.
Differentiating equation~\eqref{eqn:coupled-2} and combining it with~\eqref{eqn:coupled-1} and~\eqref{eqn:coupled-2} results in the expression~\eqref{eqn:a-from-mathematica} for $A(x)$. 
As analyzed in Lemma~\ref{lemma:system-alpha-beta-h}, the pair $(A,h)$ solves this system if and only if the function $A(x)$ from~\eqref{eqn:a-from-mathematica} satisfies equation~\eqref{eqn:coupled-2}; this is the same condition as~\eqref{eqn:fifth-order-ODE-for-h}.
The resulting equation is expressible as a quadratic in $H$ with coefficients in $h,h',\dots,h^{(4)}$; solving by the quadratic formula and enforcing $H' = (h')^2$ results in the equivalent fifth-order ODE for $h$.
Any solution $(A,h)$ of this system produces a non-trivial Ricci soliton, provided that $h'' \neq 0$ and $A > 0$ on an interval.

\smallskip
\noindent\textbf{Step 2(b): if $c \neq 0$.}
Equation~\eqref{eqn:h(x)+cy-2} requires $B^{(4)}(y)$ to be non-constant: it solves a differential equation in $y$ with $x$-variable coefficients. 
This equation must be expressible as $B^{(4)}(y) = \frac{1}{k_1 y + k_0}$ for constants $k_1 \neq 0, k_0$ with $[1 : k_1 : k_0] = \left[ c^2 A^{(4)} : ch'' : \frac{1}{2} \px^2 (h^2) \right]$.
We obtain $B$ as $Q_1 \log |k_1 y + k_0| + Q_0$ for cubics $Q_i$.
The relations $\px^2 (h^2) = 2 c k_0 h''/k_1$ and $\px^2 (h^2) = 2 k_0 c^2 A^{(4)}$ imply
\[
h(x) = ck_0 / k_1 \pm \sqrt{s_1 x + s_0}, \qquad A^{(2)} = \ell_1 x + \ell_0 \pm (k_1 c)^{-1} \sqrt{s_1 x + s_0}, \qquad s_1 \neq 0.
\]
By the argument used for $h(x) = \sqrt{D_1 x + D_0}$ in step 2(a) above, there is no solution in this case.
\end{proof}
We note that there exist many families of soliton metrics of the form described in Proposition~\hyperref[prop:d2q=0]{\ref{prop:d2q=0}(d)}, both explicit ones as well as abstract solutions of the fifth-order ODE for $h$. 
We highlight one such family.
\begin{corollary}\label{cor:familyOfS1Metric}
    The family of functions $h(x)$ such that $\{ \exists \; c \in \bR : hh'' = c (h')^2 \}$ coincides with the family 
    \begin{equation}\label{eqn:family-for-h(x)}
    \{ s_0 e^{s_1 x} \mid s_0, s_1 \in \bR \} \cup \{ s_0 (x + s_1)^a \mid s_0, s_1, a \in \bR \}.
    \end{equation}
    These functions produce Ricci soliton metrics described in Proposition~\hyperref[prop:d2q=0]{\ref{prop:d2q=0}(d)} for all $c \neq -1$ (i.e., for $h(x) \neq$ $\sqrt{D_1 x + D_0}$) if and only if up to scaling $g$ and rescaling and translating $x,y$, they are one of the following:
    \begin{enumerate}[(i)]
        \item $C_1 = 0$, $h(x) = x, A(x)$ is a cubic, $B(y)$ is a quadratic, and $g$ is the Schwarzschild metric;
        \item $C_1 = b_2 = 0$ and $\lambda = - b_1 C_2$, with $h(x) = e^x$ and $A(x) = \frac{1}{6} b_1 C_2 e^{-2x} + k e^x$, for some $k \in \bR$; or
        \item $C_1 = b_2 = 0$ and $\lambda = - \frac{2a-1}{2(a-1)} b_1 C_2$, for $a \not\in \{ \frac{1}{2},1\}$, with $h(x) = x^a$ and
        \begin{equation}\label{eqn:A(x)-given-a}
            A(x) = \frac{a - \frac{1}{2} b_1 C_2}{6 a^2 - 4a + 1} x^{2 - 2a} + k x^{\frac{2a^2 + 2a-1}{2a-1}}, \qquad \text{for some } \; k \in \bR.
        \end{equation}
    \end{enumerate}
\end{corollary}
\begin{proof}
    Writing the condition on the family as $\frac{h''}{h'} = c \frac{h'}{h}$ implies that $h^{-c} h'$ is a constant function, so either $c \neq 1$ and $h(x)^{1-c} = s_0^{1-c} (x + s_1)$ or $h(x) = s_0 e^{s_1 x}$.
    In the former case, we can write $h(x) = s_0 (x + s_1)^a$ for $a = \frac{1}{1-c}$.
    The condition on $h$ implies that $(h^2)'' = 2 (hh')' = 2 (1+c) (h')^2$, so the function $H(x) = \int (h')^2 \, dx = \frac{1}{1+c} h h'$, if $c \neq -1$.
    The case $c = -1$ leads to $(h^2)'' = 0$ and $h(x) = \sqrt{D_1 x + D_0}$, which was showed to produce no solutions in step 2(a) of Proposition~\ref{prop:d2q=0}.
    For $c \neq -1$, using the equality for $H$ together with $h h'' = c (h')^2$ transforms equation~\eqref{eqn:fifth-order-ODE-for-h} into
    \begin{equation}\label{eqn:fifth-order-simplified}
        A'(x) = \left( \frac{3-c^2}{1+c} \frac{h'}{h} + \frac{C_1}{h^2} \right) A(x) + \frac{\lambda + \frac{1}{2} b_1 C_2 + b_2 h^2}{h h'}.
    \end{equation}
    If $C_1 \neq 0$, using $h' h'' + h h^{(3)} = 2c h' h''$, computes expression~\eqref{eqn:a-from-mathematica} for $A(x)$ as
    \[
A(x) = h \frac{b_1 C_2  + 2 \frac{c}{1+c} (\lambda + b_2 h^2)}{C_1 h'}, \qquad A'(x) = (3-c) \frac{b_1 C_2 + 2 \frac{c}{1+c} (\lambda + b_2 h^2)}{C_1} - 2 \frac{b_1 C_2 + 2 \frac{c}{1+c}\lambda}{C_1}.
\]
Using the fact that $h(x)$ is an element of the family~\eqref{eqn:family-for-h(x)}, we verify that the two expressions for $A'(x)$ cannot be equal for $C_1 \neq 0$, unless $b_1 C_2 + 2 \frac{c}{1+c} \lambda = b_2 = 0$ or $c=0$.
The former case would give $A=0$, which is impossible; if $c=0$, then $h(x) = s_0 x, A(x)$ is a cubic, $B(y)$ is a quadratic, and $g$ is the Schwarzschild metric.
Otherwise, $C_1 = 0$ and the compatibility of equations~\eqref{eqn:coupled-1} and~\eqref{eqn:coupled-2} requires the numerator of~\eqref{eqn:a-from-mathematica} to also vanish, hence $b_2 = 0$ and $\lambda = - \frac{1+c}{2c} b_1 C_2$.
If $c=1$, we have $h(x) = e^x$ up to rescaling and translating $x$; we check that the equation resulting from~\eqref{eqn:fifth-order-simplified} has the solution $A(x)$ given in part (ii) above.

Finally, for $c \neq \{ -1, 0 , 1 \}$, we use $a = \frac{1}{1-c}$ to write $h(x) = x^a$, up to rescaling and translating $x,y$. 
Using these relations simplifies the remaining equation~\eqref{eqn:fifth-order-simplified} to
\[
A'(x) = \frac{2a^2 + 2a - 1}{2a-1} x^{-1} A - \frac{b_1 C_2}{2(a-1)} x^{1-2a}
\]
which has the solution $A(x)$ given in~\eqref{eqn:A(x)-given-a}. 
\end{proof}

\subsection{Degeneracy in the resolvent system and in Monge-Amp\`ere}\label{subsection:degeneracy-resolvent-system}

We next treat the degenerate cases occurring in the system~\eqref{eqn:Sx-system-solution-general} for $(S^x, S^y)$ when their denominator for their expressions vanishes, meaning
\begin{equation}\label{eqn:denominator-vanishes-general}
AB' q_x + A'B q_y - \frac{1}{2} A' B' q = 0.
\end{equation}
We also classify soliton metrics with $q = \theta (f_1(x) + f_2(y))$ or $q = \theta(x + y) + cx$.
The latter class of conformal factors corresponds to Theorem~\hyperref[thm:solutions-to-monge-ampere]{\ref{thm:solutions-to-monge-ampere}(ii)} for solutions of the degenerate Monge-Amp\`ere equation $\det D^2 q = 0$.

\begin{claim}\label{claim:vanishing-2x2}
If the vanishing~\eqref{eqn:denominator-vanishes-general} occurs, then one of the following holds:
\begin{enumerate}[(i)]
\item $A$ and $B$ are both constant;
\item $q$ is a single-variable function, namely $q = \sqrt{A}$ or $q = \sqrt{B}$;
\item $q = \sqrt{b_0 x^2 + a_0 y^2}$ and $A(x) = a_0 - c x^2, B(y) = b_0 + c y^2$, for any $c \in \bR$.
\end{enumerate}
\end{claim}
\begin{proof}
    If $A$ or $B$ is constant, then the condition $D^2 q \neq 0$ implies that both are constant. 
    Otherwise, letting $q = e^r$, we can rearrange~\eqref{eqn:denominator-vanishes-general} into $\frac{\px r}{(\log A)'} + \frac{\py r}{(\log B)'} = \frac{1}{2}$, whereby Corollary~\ref{corollary:apxr+bpyr=c} shows that $r$, and so $q$, must have the form
    \[
    r = \tilde{\theta} \left( \log (A/B) \right) + \frac{1}{2} \log A \quad \implies\quad q = A^{1/2} \theta(A/B). 
    \]
    We use this form of $q$ in~\eqref{eqn:HessianCompatibility}.
    For $\theta' \neq 0$, we can divide both sides by $(\theta')^2 B^{-3} (A' B')^2$, collect terms, and let $t := \dfrac{A}{B},  \alpha(x) := \frac{A A''}{(A')^2},  \beta(y) := \frac{B B''}{(B')^2}$.
    Then, the Monge-Amp\`ere equation for $q$ is homogenized to a polynomial in $(t, \alpha, \beta, \frac{\theta}{\theta'}, \frac{\theta''}{\theta'})$.

    Using the compatibility of the system resulting in~\eqref{eqn:Sx-system-solution-general} and $q_y = - A^{3/2} B^{-2} B' \theta'$, we obtain
    \[
    B (A'' - B'') q q_y + B' \gamma = 0, \quad \implies - B(A'' - B'') \theta \theta' t^2 + \gamma = 0.
    \]
    Solving the resulting $ 2 \times 2$ system for $\frac{\theta'}{\theta}, \frac{\theta''}{\theta}$ in terms of $A^{(i)}, B^{(i)}$ and using the property that they are functions of $t$ allows us to deduce that the resulting second-order differential equation in $\theta$ has solution $\theta(t) = \sqrt{ |1-t^{-1}| }$.
    The functions relating $\alpha, \beta$ to $A,B$ are $\alpha(A) = \pm \frac{A}{2(1-A)}$, likewise for $\beta$.
    Combined with the definition of $\alpha, \beta$, and the above systems, we solve for $A,B$ (up to rescaling and translating $x,y$) to obtain $A = a_0 \mp x^2$ and $B = b_0 \pm y^2$ in this situation.
    Thus, we arrive at case (iii), which is the final possibility.
\end{proof}
Having used the system~\eqref{eqn:Sx-system-solution-general} to decouple the soliton equation, we treat the final possibly degenerate case of Theorem~\ref{thm:main-theorem}.
\begin{proposition}\label{prop:metrics-s2-through-s5}
Suppose that the conformal factor has the form $q(x,y) = \theta(x+y) + cx$ for a single-variable function $\theta$ and a constant $c$.
Then, up to translating and rescaling $x,y$, either $\theta$ is linear and $g$ is the \pbnski metric~\eqref{metric:pbnski}, or $c=0$ and $q = \theta(x+y)$ has one of the following forms:
\begin{enumerate}
    \item[$\mr{(\metricStarStar)}$] $(A,B) = (a_0,b_0)$ and $\theta(t)$ satisfies the third-order ODE~\eqref{eqn:theta-ODE-1}; 
    \item[$\mr{(\metricQIsSqrtXPlusY)}$] $(A,B) = (x,y)$, $\theta(t) = \sqrt{t}$ and $\lambda = \frac{1}{2}$, resulting in a singular shrinking soliton;
    \item[$\mr{(\metricStarStarBar)}$] $(A,B) = (x,y)$ and $\theta(t) \neq C \sqrt{t}$ satisfies the third-order ODE~\eqref{eqn:theta-ODE-2};
    \item[$\mr{(\metricConfProdCigars)}$] $q = e^{x+y}$ and $g$ is an incomplete steady soliton~\eqref{eqn:Soliton-q=exp(x+y)-metric} conformal to the product of two cigar metrics.
\end{enumerate}
\end{proposition}
It is interesting to observe that this classification covers all the Ricci solitons presented in Table \ref{fig:SolitonEncyclopedia} that are not conformally cylindrical nor conformally products.
In this sense, this result can be viewed as complementary to Section \ref{sec:homogeneous} in terms of proving Theorem \ref{thm:secondary-thm}.
\begin{proof}
We define the function $\Theta(t) := \int (\theta')^2 \, dt$, so integrating equations~\eqref{eqn:pySxpxSy} --~\eqref{eqn:pySy} gives
        \begin{equation}\label{eqn:sx,sy-formulas-for-x+y}
            S^x = 2 \Theta(x+y) + 4 c \theta(x+y) + 2 c^2 x + C_0 y + C_1, \qquad S^y = 2 \Theta(x+y) - C_0 x - C_2
        \end{equation}
        for constants $C_i$.
        In what follows, we will suppress the argument $x+y$ in $\Theta$ and $\theta$.
        Using the expressions for $q$ and $S^x, S^y$ from~\eqref{eqn:sx,sy-formulas-for-x+y}, we write~\eqref{eqn:A''B''-PDE} --~\eqref{eqn:RicciSoliton-Last-Sx} as
\begin{align}
    0 &= (\theta + cx)^2 (A'' - B'') - 2 \Theta (A' - B') -(4 c \theta + 2 c^2x + C_0 y + C_1) A' - (C_0 x + C_2) B', \label{eqn:A''B''-PDE-q(x+y)} \\
    \begin{split}\label{eqn:Last-Sx-x+y}
    0 &= \lambda (\theta + cx) + \frac{1}{2} (\theta + cx) \left( (\theta + cx)^2 B'' - S^y B' \right) - (\theta + cx)^2 \theta' (A' + B') - c (\theta + cx)^2 A' \\
    & \; \; \; + (\theta + cx) ( (\theta')^2 - (\theta + cx) \theta'') (A + B) + (\theta + cx) (c^2 + 2c \theta') A +  (\theta' + c) (S^x A + S^y B).
    \end{split}
\end{align}
Unless noted otherwise, we will be differentiating equations~\eqref{eqn:A''B''-PDE-q(x+y)} and~\eqref{eqn:Last-Sx-x+y} by the operator $\px - \py$, which satisfies the Leibniz rule.
For example, $(\px - \py) (\tilde{\theta} \cdot u) = \tilde{\theta} \cdot (\px u - \py u)$ for any function $\tilde{\theta}(x+y)$.

If $(A,B) = (a_0, b_0)$ are constant, expanding equation~\eqref{eqn:RicciSoliton-Last-Sx} using $q$ and $S^x, S^y$ from~\eqref{eqn:sx,sy-formulas-for-x+y} shows that $c=0$, otherwise equation~\eqref{eqn:Last-Sx-x+y} cannot be satisfied due to the additional terms involving $c$ that depend only on $x$.
If $c = 0$, then also $C_0 = 0$ and $(\theta, \Theta)$ only need to satisfy~\eqref{eqn:Last-Sx-x+y}.
Differentiating by $\px - \py$ then produces the equation
\begin{equation}\label{eqn:theta-ODE-1}\tag{$\star \star_{\lambda}$}
0 = \theta^2 \theta' \theta^{(3)} - \theta^2 (\theta'')^2 + \theta \left( \frac{\lambda}{a_0 + b_0} + (\theta')^2 \right) \theta'' - 3 (\theta')^4 - \frac{\lambda}{a_0 + b_0} (\theta')^2. 
\end{equation} 
For non-constant $A,B$, equation~\eqref{eqn:A''B''-PDE-q(x+y)} is non-trivial. 
We also note that for functions $\phi, \psi$ of $t$, and $\theta, \Theta$ as defined at the beginning of the proof, we may rearrange
\begin{equation}\label{eqn:theta-phi-psi}
\theta^2 - 2 \Theta \phi + \psi = 0 \implies \theta' = \frac{1}{2 \phi} \left( \theta \pm \sqrt{\theta^2 (1 - 2 \phi') + 2 (\psi' \phi - \psi \phi')} \right)
\end{equation}
by differentiation and the quadratic formula.

If one of $A,B$ is linear, then applying $(\px - \py)$ to~\eqref{eqn:A''B''-PDE-q(x+y)} shows that either $c=0$, or $\theta$ is linear, so $g$ is the \pbnski metric~\eqref{metric:pbnski}.
For $c=0$,~\eqref{eqn:A''B''-PDE-q(x+y)} gives $(A,B) = (x,y)$ and $C_i = 0$ and we can solve~\eqref{eqn:Last-Sx-x+y} for $\Theta$, provided that $\theta(t) \neq C \sqrt{t}$. 
If $\theta(t) = \sqrt{t}$, the equality~\eqref{eqn:Last-Sx-x+y} holds for $\lambda = \frac{1}{2}$, yielding the metric
\begin{equation}\label{eqn:new-soliton}\tag{\metricQIsSqrtXPlusY}
    g = \frac{1}{x+y} \left( \frac{1}{x} dx^2 + \frac{1}{y} dy^2 + x \, ds^2 + y \, dt^2 \right), \qquad V = \left( \frac{1}{2} \log(x+y) - 1 \right) (x \px + y \py ).
\end{equation}
This corresponds to a conformally flat shrinking soliton ($\lambda = \frac{1}{2}$) with ends at the axes. 
At any point along the axes, for example $(0,y)$, the length of the horizontal curve from $(1,y)$ is $\int_0^1 \frac{dx}{\sqrt{x^2 + xy}} \asymp y^{-\frac12}\int_0^1 \frac{dx}{\sqrt{x}} < \infty$, so for $y \neq 0$, these points are singularities.

When $(A,B) = (x,y)$ and $\theta \neq C \sqrt{t}$, solving~\eqref{eqn:Last-Sx-x+y} for $\Theta$ and enforcing $\Theta' = (\theta')^2$ amounts to the ODE
\begin{equation}\label{eqn:theta-ODE-2}\tag{$\overline{\star \star}_{\lambda}$}
\begin{split}
   0 &= t \theta^2 \left( \theta - 2 t \theta' \right) \theta^{(3)} + 2 \, t^2 \theta^2 (\theta'')^2 + \theta \left( 3 \theta^2 - t \theta \theta' - 2 \, t^2 (\theta')^2  - 2  \lambda  t \right) \theta'' + 6 (\theta')^2 ( \theta - \theta' t)^2 - 2 \lambda \theta' (\theta - \theta' t) .
\end{split}
\end{equation}
In all other cases, $A'' - B'' \neq 0$ in~\eqref{eqn:A''B''-PDE-q(x+y)}.
If one of $A,B$ is quadratic or cubic, of the form $A = \sum_i a_i x^i, B = \sum_j b_j y^j$, differentiating equation~\eqref{eqn:A''B''-PDE} by $(\px - \py)^{\deg B}$ shows that either $c=0$, or $\theta'' = 0$, or one of $a_3 x^2 + b_3 y^2$ (in the cubic case) or $a_2 x - b_2y$ (in the quadratic case) is a function of $x+y$.
        Since the latter property cannot hold, we deduce that $\theta$ is linear. 
        If $c=0$, then~\eqref{eqn:A''B''-PDE-q(x+y)} reduces to an equation of the form~\eqref{eqn:theta-phi-psi} that yields $\theta' = \frac{\theta}{t}$, so $\theta$ is linear and again $g$ is the \pbnski metric~\eqref{metric:pbnski}.
        
        In all other cases, we now assume $A^{(4)}, B^{(4)} \neq 0$. 
Note that
\begin{equation}\label{eqn:sx-sy-a-b-derivatives}
    (\px - \py)(S^x A + S^y B) = 2 c^2 A - C_0(A+B) + (\theta + cx)^2 (A'' - B'')
\end{equation}
by using the expressions~\eqref{eqn:sx,sy-formulas-for-x+y} and~\eqref{eqn:A''B''-PDE-q(x+y)}.
Applying $\py$ to~\eqref{eqn:A''B''-PDE-q(x+y)} then gives
\begin{equation}\label{eqn:b-triple-prime-sub}
- (\theta + cx)^2 B''' + S^y B'' = 2 (\theta + cx) \theta' B'' - 2\theta'^2 B' - 2 (\theta + cx) \theta' A'' + (2 \theta'^2 + 4 c \theta' + C_0) A'. 
\end{equation}
Applying $(\px - \py)$ to~\eqref{eqn:Last-Sx-x+y} and using~\eqref{eqn:b-triple-prime-sub} as well as $S^y = 2 \Theta - C_0 x - C_2$ from~\eqref{eqn:sx,sy-formulas-for-x+y}, we obtain 
\begin{align*}
    c \Theta B' &= \lambda c + \frac{1}{2} c (C_0 x + C_2) B' + \frac{1}{2} (\theta + cx) ( (C_0 - 2c^2) A' + C_0 B') - \frac{1}{2} (\theta + cx)^2 ( 2 \theta' A''- (2 \theta'+c) B'') \\
    & \; \; \; + c^2 (3c + 4 \theta') A  + c \left( (\theta')^2 - 2 (\theta + cx) \theta'' \right) (A+B) + (\theta + cx) ( 2 (\theta')^2 + 2 c \theta' - (\theta + cx) \theta'') (A' - B') .
\end{align*}
First, if $c \neq 0$, we can divide the right-hand side by $cB'$ to solve for $\Theta$.
Setting $\beta(y) := \log B(y)$ here, since $B > 0$, we have $\frac{B}{B'} = \frac{1}{\beta'}$ and $\frac{B''}{B'} = \frac{\beta''}{\beta'} + \beta'$.
Applying $\py \Theta = (\theta')^2$ and combining it with the equation for $\Theta$ derived from~\eqref{eqn:A''B''-PDE-q(x+y)} produces a system of equations as functions of $x+y$, which can be related via $\Theta' = (\theta')^2 = \theta^2 \cdot \left( \frac{1}{2} \frac{(\theta^2)'}{\theta^2} \right)^2$.
Solving for $\theta'$ by the formula~\eqref{eqn:theta-phi-psi}, we see that the resulting system admits no solutions for $c \neq 0$, due to the additional terms involving $c$ that only depend on $x$.

Next, if $c=0$, the above equation involving $c \Theta B'$ simplifies to
\begin{equation}\label{eqn:remaining-equation-c=0}
C_0(A'+B') - 2 \theta \theta' (A''- B'') + 2 (2 \theta'^2 - \theta \theta'')(A' - B') = 0. 
\end{equation}
Applying $(\px-\py)$ to equation~\eqref{eqn:A''B''-PDE-q(x+y)} and combining it with~\eqref{eqn:remaining-equation-c=0}, we deduce that $C_0 = 0$, and so equation~\eqref{eqn:remaining-equation-c=0} implies that $\frac{A'' - B''}{A' - B'} = \phi(x+y)$ for some $\phi$. 
Applying $(\px - \py)$ to this relation, we obtain
\[
\frac{A'' - B''}{A' - B'} = \phi = \frac{A^{(3)} + B^{(3)}}{A'' + B''} \implies (A'')^2 - (B'')^2 = (A^{(3)} + B^{(3)}) (A' - B').
\]
Differentiating this relation in $x$ gives $A'' A^{(3)} = - B' A^{(4)} + A'' B^{(3)} + A' A^{(4)}$, which can be viewed as an ODE in $B(y)$, of the form $c_1(x) B^{(3)} - c_2(x) B' = c_3(x)$.
This ODE requires $B(y) = P(y) + b_{\pm} \exp \left( \pm \sqrt{c_2/c_1} y \right)$, for $P$ a cubic polynomial; by symmetry, the same holds for $A$.
Substituting into~\eqref{eqn:A''B''-PDE-q(x+y)} shows that one of the summands is trivial; up to rescaling, we may consider $A,B$ to be exponential, trigonometric, or hyperbolic.

For $(A,B) = (\pm \sin x, \pm \sin y)$, the sum-to-product formulas reduce~\eqref{eqn:A''B''-PDE-q(x+y)} to an equation of the form~\eqref{eqn:theta-phi-psi} with $\psi = 0$ and $\phi = - \frac{1}{\tan(t/2)}$.
Here, $1-2 \phi' < 0$ except at $t= (2k+1)\pi$, so~\eqref{eqn:theta-phi-psi} cannot be satisfied; the computations involving $\cos(-)$ are analogous.
For hyperbolic functions, the same argument allows us to complete the square in the region $\cosh^2 t \geq 2$ and thus compute $\theta$; substituting all the terms again produces no solution.
In the exponential case,~\eqref{eqn:A''B''-PDE-q(x+y)} yields $\theta(t) = e^t$, so $q = e^{x+y}$, and $C_0 = C_1 = 0$, whereby $A(x) = k_{A,1} e^x - cx + k_{A,0}$, likewise for $B$.
Computing all the terms of~\eqref{eqn:RicciSoliton-Last-Sx} shows that this is equation satisfied if and only if $c=\lambda=0$ and $k_{A,0} = - k_{B,0} = k_0$, so upon rescaling, 
\begin{equation}\label{eqn:Soliton-q=exp(x+y)-metric}\tag{\metricConfProdCigars}
\begin{split}
    g &= e^{-2(x+y)} \left( \dfrac{1}{k_1 e^{x} + k_0} dx^2 + \dfrac{1}{k_2 e^y - k_0} dy^2 + \left( k_1 e^x + k_0 \right) ds^2 + \left( k_2 e^y - k_0 \right) dt^2  \right), \\
    V &= - e^{-2 (x+y)} \left( (k_1 e^x + k_0) \px + (k_2 e^y - k_0) \py \right).
    \end{split}
    \end{equation}
This soliton is conformal to~\eqref{eqn:q=1-soliton} (where $q=1$), specialized to $\lambda =0$.
As noted in Proposition~\ref{prop:d2q=0}, the latter metric is the product of two steady cigar solitons.
The metric~\eqref{eqn:Soliton-q=exp(x+y)-metric} has maximal domain $(x,y) \in \bR \times \left( \log (k_0/k_2),\infty \right)$.
As $x \to +\infty$, it is approximated by $g \approx \hat{g} := e^{-2y}\left(k_1e^{-3x}\, dx^2 + k_1e^{-x}\,ds^2\right) + e^{-2x}\left(\dfrac{1}{k_2 e^y - k_0} \,dy^2+ \left( k_2 e^y - k_0 \right)\, dt^2 \right)$.
Taking $x \to + \infty$ for fixed $y_0$ shows that $\hat{g}$, and so $g$, is incomplete, as the $(y,t)$ directions collapse rapidly.
\end{proof}

\subsection{Conformally flat solitons}\label{subsect:conformally-flat}

When $A,B$ in~\eqref{eqn:PaperMetric} are constants, we may rescale to express $g = q^{-2} g_{\on{Euc}}$.
By the work of Cao-Chen \cite{cao-chen}, a complete (non-compact) locally conformally flat shrinking or steady gradient Ricci soliton is one of: Einstein (including the Gaussian shrinkers), cylindrical ($\bS^{n-1} \times \bR$), the Bryant soliton, or a quotient thereof.
Proposition~\ref{prop:d2q=0} recovers some solitons of this form, namely the Einstein or Gaussian soliton structures on $\bR^4$, when $q=1$, and the degeneration of the \pbnski metric, when $q=x+y$.
On the other hand, the result of \cite{cao-chen} does not rule out the existence of expanding Ricci solitons; indeed, our analysis in the following lemma obtains an explicit complete Ricci expander. 

It may be possible to obtain other complete expanding Ricci solitons from abstract existence results for the PDEs presented below.
The flexibility of conformally flat Ricci soliton metrics is expected due to case (d) of Proposition~\ref{prop:d2q=0}, where the flat geometry in the $y$-direction led to great flexibility in the $x$-direction.
Taking $A=1$ in Proposition~\hyperref[prop:d2q=0]{\ref{prop:d2q=0}(d)} implies $B=1$ and reduces the condition to a third-order equation for $h$ coming from~\eqref{eqn:coupled-2}, with many explicit solutions. 
We generalize this class of solutions in the following lemma.
\begin{lemma}\label{lemma:conformally-flat}
    Suppose that $D^2 q \neq 0$.
    The metric
    \begin{equation}\label{eqn:conformally-flat-metric}\tag{\metricCfCf}
        g = q^{-2} g_{\on{Euc}} = \frac{1}{q(x,y)^2} (dx^2 + dy^2 + ds^2 + dt^2)
    \end{equation}
    is a Ricci soliton if and only if one of the following holds:
    \begin{enumerate}[(i)]
        \item it is an expanding soliton with $q = C \sqrt{x^2 + y^2}$ and $\lambda = - 2 C$;
        \item $q = \theta(x+y)$, where $\theta$ satisfies the third-order ODE~\eqref{eqn:theta-ODE-1};
        \item $q$ satisfies $\det D^2 q = 0$ together with the pair of fourth-order differential equations given by 
    \begin{align}
            \px \tilde{S} &= 2 q_x^2, \label{eqn:px-s-tilde} \tag{$\mathbf{CF}_1$} \\
            \tag{$\mathbf{CF}_2$} \begin{split} 
    q_x q_y \py \tilde{S} &=  (q_x q_{yy} - q_y q_{xy}) \tilde{S} + q^2 q_y (q_{xxy} + q_{yyy}) - 2 q q_y ( q_x q_{xy} - q_y q_{xx}) - q_y^2 (q_x^2 + 3 q_y^2 + \lambda) \\
    & \; \; \; + q q_{yy} \left( - q (q_{xx} + q_{yy}) + q_x^2 + q_y^2 \right) - \lambda q q_{yy}, \label{eqn:py-s-tilde}  
\end{split}
    \end{align}
    where we define
    \begin{equation}\label{eqn:S-tilde-x}
\begin{split}
\tilde{S} &:= q q_{xy} \frac{q (q_{xx} + q_{yy}) - (q_x^2 + q_y^2 + \lambda)}{q_x q_{xy} - q_y q_{xx}} - 2 q q_y \frac{q_x q_{xy} - q_y q_{xx}}{q_x q_{xx} + q_y q_{xy}} \\ 
& \;\;\quad - q_y q_{xx} \frac{q^2 \left( q_x (q_{xxx} + q_{xyy}) + q_y (q_{xxy} + q_{yyy}) \right) - 3 (q_x^2 + q_y^2)^2 - \lambda (q_x^2 + q_y^2)}{(q_x q_{xx} + q_y q_{xy}) (q_x q_{xy} - q_y q_{xx})}.
\end{split}
\end{equation}
\end{enumerate}
\end{lemma}
\begin{proof}
The conditions of Corollary~\ref{corollary:exist-Sx-Sy} simplify to the differential system~\eqref{eqn:pySxpxSy} --~\eqref{eqn:pySy} together with
\begin{equation}\label{eqn:conformally-flat-last-sx}
    q_xS^x + q_y S^y = q \gamma,
\end{equation}
where $\gamma := \gamma(q,1,1,\lambda) = q (q_{xx} + q_{yy}) - (q_x^2 + q_y^2 + \lambda)$ is the function defined in~\eqref{eqn:gamma-A,B,q}, when $A = B = 1$.
Equation~\eqref{eqn:conformally-flat-last-sx} is obtained from~\eqref{eqn:Last-Sx-Sy-gamma} upon setting $A=B=1$.
The equality~\eqref{eqn:conformally-flat-last-sx} implies that
\begin{equation}\label{eqn:setup-system-qxsx+qysy}
q_x \px \left( q_x S^x + q_y S^y \right) + q_y \py \left( q_x S^x + q_y S^y \right) = (q_x^2 + q_y^2) \gamma + q (q_x \gamma_x +  q_y \gamma_y)
\end{equation}
which, upon using~\eqref{eqn:pySxpxSy} --~\eqref{eqn:pySy}, takes the form
\begin{equation}\label{eqn:sx-sy-second-equation}
\begin{split}
& q_{xx} S^x + q_{xy} S^y \\
&= \frac{q^2 q_{xx} ( q_x (q_{xxx} + q_{xyy}) + q_y (q_{xxy} + q_{yyy})) - q_{xx} (3 (q_x^2 + q_y^2)^2 + \lambda (q_x^2 + q_y^2)) + 2q (q_x q_{xy} - q_y q_{xx})^2}{q_x q_{xx} + q_y q_{xy}}
\end{split}
\end{equation}
upon applying~\eqref{eqn:HessianCompatibility}.
We may view the pair of equations~\eqref{eqn:conformally-flat-last-sx} and~\eqref{eqn:sx-sy-second-equation} as a $2 \times 2$ system with unknowns $S^x, S^y$, which has unique solution $(S^x, S^y) = \left(\tilde{S}, \frac{q \gamma}{q_y} - \frac{q_x}{q_y} \tilde{S} \right)$ for $\tilde{S}$ given by~\eqref{eqn:S-tilde-x}.
The expression for $\tilde{S}$ is well-defined unless
\[
q_x q_{xy} - q_y q_{xx} = 0 \qquad \text{or} \qquad q_x q_{xx} + q_y q_{xy} = 0 .
\]
The first case implies that $\frac{q_x}{q_y} = a(x)$, while the second one is equivalent to$q_x^2 + q_y^2 = 2 b(y)$, for single-variable functions $a,b$.
In the first case, the argument of Theorem~\ref{thm:solutions-to-monge-ampere} (an application of Lemma~\ref{lemma:level-set-composition} following~\ref{eqn:homo-first-step}) shows that, in fact, $\frac{q_x}{q_y} = C_2$ and $q = \theta(x+y)$ up to rescaling and translating $x,y$.
This possibility was treated in Proposition~\ref{prop:metrics-s2-through-s5} and amounts to equation~\eqref{eqn:theta-ODE-1}.
In the second case, we may use $D^2 q \neq 0$ to write
\[
q_x q_{xx} + q_y q_{xy} = 0, \qquad \frac{q_{xx}}{q_{xy}} = \frac{q_{xy}}{q_{yy}}, \qquad \implies b'(y) = \frac{1}{2} \partial_y (q_x^2 + q_y^2) = q_x q_{xy} + q_y q_{yy} = 0,
\]
whereby $q$ satisfies the eikonal equation $q_x^2 + q_y^2 =C_1$.
We may scale $q$ to take $C_1 = 1$ and obtain the equalities
\begin{equation}\label{eqn:eikonal-relations}
q_{xy} = - \frac{q_x}{q_y} q_{xx}, \qquad q_{yy} = - \frac{q_x}{q_y} q_{xy} =  \frac{q_x^2}{q_y^2} q_{xx}, \qquad \frac{q_x}{q_y} q_{xxx} + q_{xxy} = -\frac{q_{xx}^2}{q_y^3},
\end{equation}
where the last one is derived by differentiating the first two.
In this situation, the coefficients of $S^x$ and $S^y$ in equation~\eqref{eqn:setup-system-qxsx+qysy} vanish, while the eikonal relations~\eqref{eqn:eikonal-relations} transform the equation~\eqref{eqn:setup-system-qxsx+qysy} into 
\[
2 q q_y^2 q_{xx} - q^2 q_{xx}^2 = (1 - (\alpha - 1)^2) q_y^4, \qquad \text{where } \; \lambda = - (\alpha - 1)^2 - 2. 
\]
Rearranging the terms in this equation implies 
\[
\alpha^{-1} q q_{xx} = q_y^2, \quad \implies \frac{q_{xy}}{q_y} = - \alpha \frac{q_x}{q}, \quad \implies \px \log q_y = - \alpha \px \log q, 
\]
by using the first relation from~\eqref{eqn:eikonal-relations}. This implies that the function $q^{\alpha}q_y$ is constant in $x$, so $\py (q^{\alpha+1}) = G'_2(y)$ and $q^{\alpha +1} = G_1(x) + G_2(y)$.
The eikonal equation for $q$ therefore takes the form
\[
1 = q_x^2 + q_y^2 = \frac{1}{(\alpha + 1)^2} (G'_1(x)^2 + G'_2(y)^2) (G_1(x) + G_2(y))^{- \frac{2 \alpha}{\alpha + 1}}
\]
which for $\frac{2 \alpha}{\alpha +1 } \not\in \{ 0,1 \}$ has only the trivial solution with $G_i$ constant.
When $\alpha = 0$, the $G_i$ must be linear; when $\alpha = 1$, we obtain $\lambda = - 2$ and $G_1(x) + G_2(y) = x^2 + y^2$, so $g = \frac{1}{x^2 + y^2} g_{\on{Euc}}$.

In all other cases, when the expression $\tilde{S}$ given by~\eqref{eqn:S-tilde-x} is well-defined, we claim that the soliton conditions of~\ref{corollary:exist-Sx-Sy} become equivalent to the pair of equations~\eqref{eqn:px-s-tilde} and~\eqref{eqn:py-s-tilde}, i.e.,~\eqref{eqn:pySxpxSy} holds automatically.
The first direction follows since the $(S^x, S^y) = \left(\tilde{S}, \frac{q \gamma}{q_y} - \frac{q_x}{q_y} \tilde{S} \right)$ are determined uniquely and satisfy~\eqref{eqn:pxSx},~\eqref{eqn:pySy}.
Conversely, defining $(S^x, S^y) = (S^x, S^y) = \left(\tilde{S}, \frac{q \gamma}{q_y} - \frac{q_x}{q_y} \tilde{S} \right)$ in this manner ensures that they satisfy~\eqref{eqn:conformally-flat-last-sx} and~\eqref{eqn:sx-sy-second-equation}.
Differentiating~\eqref{eqn:conformally-flat-last-sx} with $\px, \py$ and matching coefficients shows that
\[
q_xq_y(\py S^x + \px S^y) = 4 q_x^2 q_y^2, \quad \implies~\eqref{eqn:pySxpxSy}
\]
since $\tilde{S}^x, \tilde{S}^y$ are constructed to satisfy~\eqref{eqn:conformally-flat-last-sx}.
\end{proof}

Combining the results presented in this section recovers all the metrics introduced in Theorem~\hyperref[thm:secondary-thm]{\ref{thm:secondary-thm}(i)}.
This will be crucial in proving Theorem \ref{thm:secondary-thm}, upon invoking the classification Theorem \ref{thm:solutions-to-monge-ampere} for local solutions $q$ of the Monge-Amp\`ere equation.
Namely, any $q$ as in Theorem~\hyperref[thm:solutions-to-monge-ampere]{\ref{thm:solutions-to-monge-ampere}(i)} produces the metrics of Proposition \ref{prop:d2q=0}, while any $q$ as in Theorem~\hyperref[thm:solutions-to-monge-ampere]{\ref{thm:solutions-to-monge-ampere}(ii)} has $q = \theta(x+y) + cx$ and the resulting metrics
are classified in Proposition \ref{prop:metrics-s2-through-s5}.
The remaining class of solutions introduced in Theorem~\hyperref[thm:solutions-to-monge-ampere]{\ref{thm:solutions-to-monge-ampere}(iii)} is, in some sense, too general to classify completely.
The collection of such functions contains the homogeneous conformal factors $q$, whose rich geometry we now investigate.
In our proof of Theorem~\hyperref[thm:secondary-thm]{\ref{thm:secondary-thm}(ii)}, we will show that $q$ must have this form when one of the local surface factors has constant curvature.

\section{Conformally cylindrical base}\label{sec:homogeneous} 

We specialize to the conformally cylindrical ansatz where $q$ is homogeneous, i.e., $q(\tau x,\tau y) = \tau q(x,y)$, which forces a geometric volume collapse at infinity and removes the Monge-Amp\`ere obstruction to solutions.
We call this metric property conformally cylindrical because it makes the base metric in the $(x,y)$-plane asymptotically cylindrical near the origin.
The base has domain $\Omega \ni (x,y)$ over which the total space is a torus fibration.
The conformally cylindrical property reduces the soliton PDE to an ODE, yielding explicit solutions and geometries of cohomogeneity two Ricci solitons as well as abstract existence results.
All previously known Einstein metrics and explicit solitons of the form~\eqref{eqn:PaperMetric} have conformally cylindrical base.

If $q$ is homogeneous of degree $1$, it satisfies the Hessian compatibility relation~\eqref{eqn:HessianCompatibility}.
By Euler's theorem, homogeneity is equivalent to $x q_x + y q_y = q$.
Applying $\py$ (respectively, $\px$) to this relation implies that
\begin{equation}\label{eqn:homogeneous-partials}
        x q_{xy} + y q_{yy} =0 \qquad \text{and} \qquad x q_{xx} + y q_{xy} =0.
\end{equation}
These equalities imply $q_{xx}q_{yy} = ( - \frac{y}{x} ) q_{xy} ( - \frac{x}{y} ) q_{xy} = q_{xy}^2$, so $q$ satisfies~\eqref{eqn:HessianCompatibility} and $x^2 q_{xx} = y^2 q_{yy}$.

In the cohomogeneity one case, Ricci solitons can be constructed by reducing the soliton equation to ODEs which can be well-described analytically and numerically, cf.~\cites{buzano-dancer-wang, dancer-hall-wang, dancer-wang}.
Recently, Nienhaus and Wink \cite{nienhaus-wink-4d} extended this analysis to the ODE system resulting from a higher-dimensional warped product metric to produce asymptotically conical expanding Ricci solitons with positive scalar curvature on $\bR^3 \times \bS^1$.
In our setting, the high degree of symmetry imposed on the metric~\eqref{eqn:PaperMetric} together with the conformally cylindrical base condition requires different techniques and existence arguments.

In general, $q$ being homogeneous means that the base metric is asymptotically cylindrical near the origin.
When $A,B$ are constants, the base metric is fully cylindrical. 
Near the origin, as $(A,B) \approx (A(0), B(0))$, the metric converges to a cylinder.
Since $q$ vanishes at the origin, we can describe the two-dimensional asymptotic cylinder as the cylinder over a level set of $q$.
For $A,B$ constant, the level sets of $q$ are all isometric because the domain increases linearly with the radius $r = \sqrt{x^2 + y^2}$ while the conformal factor scales inversely, demonstrating the cylindrical structure. 
Letting $N = \{(x,y) : q(x,y) = 1\}$ with the induced metric, the manifold near the origin will be $\bR \times N$ with the cylinder metric $dr^2 + g_N$.
Similarly, as $x$ or $y$ diverges to infinity, if $A(x)$ and $B(y)$ approach constant values along a direction to infinity, the metric will likewise converge to a cylinder.
As $(x,y) \to (0,0)$, the size of the $s$ or $t$ circle fibers grows linearly, so along a straight line to the origin, this fibration forms a cone with Euclidean volume growth.
When either $x$ or $y$ diverges to infinity, one of the circles in the $s$ or $t$ direction must collapse to 0.

We will prove that the resolvents $S^x, S^y$ inherit homogeneity properties from $q$.
We define $S := x S^x + y S^y$ and use the relations~\eqref{eqn:pySxpxSy} --~\eqref{eqn:pySy} to compute
\begin{align}
    \partial_i \partial_j (S - 2q^2) &= 4 (\partial_j \partial_j q)(x q_x + y q_y - q), & & \text{for } \; i,j \in \{ x, y \}, \label{eqn:pipjSh2q2} \\
    x \partial_x (S - 2q^2) + y \partial_y (S - 2q^2) &= (S - 2q^2) + 2 (x q_x + y q_y - q)^2. \label{eqn:xpx+ypySh2q2}
\end{align}
\begin{lemma}\label{lemma:Cx=Cy=0}
If $q$ is homogeneous and $q_{xy} \neq 0$, then $S^x, S^y$ are homogeneous of degree 1, and further satisfy
\begin{equation}\label{eqn:xSx+ySy=2q2}
x S^x + y S^y = 2q^2.
\end{equation}
Conversely, equation~\eqref{eqn:xSx+ySy=2q2} implies $q$ is homogeneous.
Additionally, if there exits a soliton metric \eqref{eqn:PaperMetric} with $q = \tilde{q} + c$ for $\tilde{q}$ a homogeneous function and $c$ a constant, then $q$ is homogeneous up to translating $x,y$.
\end{lemma}
\begin{proof}
First, if condition~\eqref{eqn:xSx+ySy=2q2} holds, then applying $\px \py$ to it and using using~\eqref{eqn:pipjSh2q2} shows that $x q_x + y q_y = q$, so $q$ is homogeneous. 
Conversely, suppose that $q = \tilde{q} + c$ as described above.
We first show that 
\begin{equation}\label{eqn:xSx-Cx+ySy-Cy=2q2}
x (S^x - C^x) + y (S^y - C^y) = 2 \tilde{q}^2, \qquad \text{for some constants } \; C^x, C^y,
\end{equation}
and then show that these constants vanish, so $c = C^x = C^y = 0$.
This will mean that $q = \tilde{q}, S^x$, and $S^y$ are homogeneous of degree $1$, as desired.
To obtain the expression~\eqref{eqn:xSx-Cx+ySy-Cy=2q2}, we use the homogeneity and relation~\eqref{eqn:pipjSh2q2}, which together imply that $S - 2 \tilde{q}^2$ is the sum of two single-variable functions in $x$ and $y$.
Equation~\eqref{eqn:xpx+ypySh2q2} also shows that $S - 2 \tilde{q}^2$ is homogeneous of degree $1$.
These two properties allow us to express 
\[ 
S - 2 \tilde{q}^2 := x S^x + yS^y - 2 \tilde{q}^2 = C^x x + C^y y, \quad \text{for some } \; C^x, C^y,
\]
which is~\eqref{eqn:xSx-Cx+ySy-Cy=2q2}. 
Finally, we differentiate the relation~\eqref{eqn:xSx-Cx+ySy-Cy=2q2} with $\py$; multiply by $y$; and apply equations~\eqref{eqn:xSx-Cx+ySy-Cy=2q2}, \eqref{eqn:pxSx}, \eqref{eqn:pySy}, as well as the homogeneity properties of $\tilde{q}$ (since $\partial_i q = \partial_i \tilde{q}$) to obtain
\[
xy \py (S^x - C^x) - x (S^x - C^x) + x^2 \px (S^x - C^x) = 0, \qquad \text{by } \; \px (S^x - C^x) = 2 \tilde{q}_x^2.
\]
This shows that $\tilde{S}^x := S^x - C^x$ is homogeneous of degree $1$; by symmetry, $\tilde{S}^y := S^y - C^y$ is as well.

To show that $C^x = C^y = 0$, we consider equation~\eqref{eqn:A''B''-PDE}. 
This fits into the framework of Corollary~\ref{cor:FGsystem} with $(F,G) = (A', B')$ and $u = (\tilde{q}+c)^{-2} \tilde{S}^x + C^x (\tilde{q}+c)^{-2}$ and $v = (\tilde{q}+c)^{-2} \tilde{S}^y + C^y (\tilde{q}+c)^{-2}$.
Note that
\begin{align*}
\partial^{\alpha} ( (\tilde{q} + c)^{-2} \tilde{S}^x) &= \sum_{\beta \leq \alpha} c_{\alpha, \beta} \partial^{\beta} ((\tilde{q}+c)^{-2}) \partial^{\alpha - \beta} \tilde{S}^x \\ 
&= \sum_{\beta \leq \alpha} \sum_{\gamma \leq \beta} c_{\alpha, \beta, \gamma, \delta} (\tilde{q} + c)^{-2 - |\beta| + |\gamma|} \partial^{\alpha - \beta} \tilde{S}^x \prod_{\delta_1 + \dots + \delta_k = \gamma} \partial^{\delta_i} \tilde{q}
\end{align*}
where each term $\partial^{\delta_i} \tilde{q}$ is homogeneous of degree $1 - |\delta_i|$ and $\partial^{\alpha - \beta} \tilde{S}^x$ is homogeneous of degree $1 - |\alpha| + |\beta|$.
An analogous formula holds for $\partial^{\alpha} ( C^x (\tilde{q} + c)^{-2})$.

We use these computations to deduce that $C^x = C^y = 0$, so $q = \tilde{q}, S^x = \tilde{S}^x$ and $S^y = \tilde{S}^y$ are homogeneous of degree one.
Corollary~\ref{cor:FGsystem} ensures that the solvability of equation~\eqref{eqn:A''B''-PDE} is equivalent to $u,v$ satisfying a triple of differential equations with appropriately matched coefficients so that the top-order terms contribute towards summands of the same homogeneity.
This is because in the notation of Corollary~\ref{cor:FGsystem}, if $u,v$ were homogeneous of degree $-1$, then the corresponding coefficient $\Pi_{r=1}^k \partial_x^{p_r} \partial^{q_r}_y u \cdot \Pi_{s=1}^{\ell} \partial_x^{p'_s} \partial_y^{q'_s} v$ would be homogeneous of degree $-N$, where $N := \sum_r (p_r + q_r) + \sum_s (p'_s + q'_s) + (k+\ell)$ is constant.
The terms $C^i (\tilde{q}+c)^{-2}$ contribute higher negative powers, so by the above, the largest power is $- (N + k+\ell)$.
Explicitly computing these expressions in terms of $\tilde{S}^i, \tilde{q}, C^i, c$, matching the homogeneous terms of each degree for each of them, and using the fact that $\partial^{\alpha} q \neq 0$ for any $\alpha$, we deduce that $c=0$.
The remaining terms then simplify to show that $C^x = C^y = 0$ as well, whereby the resolvents $S^x$ and $S^y$ are homogeneous of degree one.
\end{proof}
\begin{corollary}\label{corollary:homogeneous-SxSy-equivalent}
    If $q$ is homogeneous of degree $1$, then the properties~\eqref{eqn:pySxpxSy} -- \eqref{eqn:pySy} for $S^x, S^y$ are equivalent.
\end{corollary}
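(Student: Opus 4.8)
The plan is to treat the three relations \eqref{eqn:pySxpxSy}, \eqref{eqn:pxSx}, \eqref{eqn:pySy} as the three components of the single symmetric-tensor identity $\tfrac12(\partial_i S^j + \partial_j S^i) = 2\,q_i q_j$, and to show that once the structural consequences of homogeneity are in force, any one component determines the other two. The point is that \eqref{eqn:pxSx} and \eqref{eqn:pySy} constrain \emph{different} functions ($S^x$ versus $S^y$) while \eqref{eqn:pySxpxSy} is purely off-diagonal, so no bridge between $S^x$ and $S^y$ comes from homogeneity of $q$ alone. The bridge I would use is supplied by Lemma \ref{lemma:Cx=Cy=0}: since $q$ is homogeneous of degree $1$, the resolvents $S^x, S^y$ are themselves homogeneous of degree $1$ and obey the quadratic relation \eqref{eqn:xSx+ySy=2q2}, $xS^x + yS^y = 2q^2$. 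These are precisely the ingredients that couple $S^x$ to $S^y$.

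First I would record the Euler identities to be fed in. Homogeneity of $q$ gives $x q_x + y q_y = q$, hence $4qq_x = 4xq_x^2 + 4yq_xq_y$ and $4qq_y = 4xq_xq_y + 4yq_y^2$, while the second derivatives are controlled by \eqref{eqn:homogeneous-partials}; homogeneity of the resolvents gives $x\,\partial_x S^x + y\,\partial_y S^x = S^x$ and $x\,\partial_x S^y + y\,\partial_y S^y = S^y$. The core step is then a short computation. Differentiating \eqref{eqn:xSx+ySy=2q2} in $x$ yields $S^x + x\,\partial_x S^x + y\,\partial_x S^y = 4qq_x$; assuming \eqref{eqn:pxSx}, i.e.\ $\partial_x S^x = 2q_x^2$, and substituting the Euler identity for $q$, this collapses to $y\,\partial_x S^y = 2xq_x^2 - S^x + 4yq_xq_y$, and replacing $2xq_x^2 - S^x$ by $-\,y\,\partial_y S^x$ via the Euler identity for $S^x$ produces exactly \eqref{eqn:pySxpxSy}, $\partial_y S^x + \partial_x S^y = 4q_xq_y$. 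Differentiating \eqref{eqn:xSx+ySy=2q2} in $y$ and feeding in \eqref{eqn:pySxpxSy} together with the Euler identity for $S^y$ then isolates $\partial_y S^y = 2q_y^2$, which is \eqref{eqn:pySy}. Thus \eqref{eqn:pxSx} implies the other two relations; by the symmetry $x \leftrightsquigarrow y$, $S^x \leftrightsquigarrow S^y$, \eqref{eqn:pySy} does as well, and reversing the first computation (using the $x$-Euler identity for $S^x$ to pass from \eqref{eqn:pySxpxSy} back to $\partial_x S^x = 2q_x^2$) shows \eqref{eqn:pySxpxSy} recovers \eqref{eqn:pxSx}. This closes the cycle of implications.

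The main obstacle is conceptual rather than computational: the equivalence is genuinely powered by \eqref{eqn:xSx+ySy=2q2} and the degree-one homogeneity of $S^x, S^y$, and one can check that dropping these makes the three relations inequivalent — for instance, \eqref{eqn:xSx+ySy=2q2} together with \eqref{eqn:pySxpxSy} admits extra solutions (adding $S^x \mapsto S^x + y\,h(xy)$, $S^y \mapsto S^y - x\,h(xy)$ preserves both but destroys \eqref{eqn:pxSx} unless $h$ is constant). The crux is therefore to justify importing the homogeneity of the resolvents and the relation \eqref{eqn:xSx+ySy=2q2} from Lemma \ref{lemma:Cx=Cy=0} as standing structural facts, and then to track carefully which Euler identity cancels which term so that each implication above is reversible.
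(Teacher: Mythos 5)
Your proposal is correct and follows essentially the same route as the paper: both import the homogeneity of $S^x,S^y$ and the relation \eqref{eqn:xSx+ySy=2q2} from Lemma \ref{lemma:Cx=Cy=0}, differentiate \eqref{eqn:xSx+ySy=2q2} in $x$ and $y$, and apply Euler's theorem to $q, S^x, S^y$; the paper's two displayed identities $2x(\px S^x) + y(\py S^x + \px S^y) = 4xq_x^2 + 4yq_xq_y$ and $x(\py S^x + \px S^y) + 2y(\py S^y) = 4xq_xq_y + 4yq_y^2$ are exactly the relations your implication-by-implication computation assembles and then reads off. Your closing remark correctly identifies the same structural point the paper relies on implicitly, namely that the equivalence is powered by the degree-one homogeneity of the resolvents together with \eqref{eqn:xSx+ySy=2q2}, not by the homogeneity of $q$ alone.
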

\begin{proof}
For homogeneous $q$, Lemma~\ref{lemma:Cx=Cy=0} proves that $S^x, S^y$ must also be homogeneous and satisfy~\eqref{eqn:xSx+ySy=2q2}.
Differentiating~\eqref{eqn:xSx+ySy=2q2} in $x,y$ and using Euler's theorem for the homogeneous functions $q, S^x, S^y$ yields
\[
    2 x (\px S^x) + y (\py S^x + \px S^y) = 4 xq_x^2 + 4 y q_xq_y \quad \text{ and }\quad 
    x (\py S^x + \px S^y) + 2 y (\py S^y) = 4 x q_xq_y + 4 yq_y^2.
\]
The equivalence of~\eqref{eqn:pySxpxSy} --~\eqref{eqn:pySy} follows from these relations.
\end{proof}
\begin{proposition}\label{prop:transform-A-B-c}
Given a homogeneous conformal factor $q$ and a pair $(A,B)$, the expressions~\eqref{eqn:Sx-system-solution-general} for $S^x, S^y$ are invariant under the transformation $(\tilde{A}, \tilde{B}) := (A(x) + cx^2, B(y) - cy^2)$ for any $c \in \bR$.
In particular, the triple $(q, A,B)$ satisfies the soliton equations if and only if $(q, \tilde{A}, \tilde{B})$ does. 
\end{proposition}
\begin{proof}
The property~\eqref{eqn:xSx+ySy=2q2} shows that the pair $(\tilde{A}, \tilde{B})$ satisfies equation~\eqref{eqn:A''B''-PDE}, with the same $S^x, S^y$, if $(A,B)$ does.
For the soliton equation~\eqref{eqn:RicciSoliton-Last-Sx}, we use the fact that $x^2 q_{xx} = y^2 q_{yy}$ by the homogeneity,~\eqref{eqn:homogeneous-partials}, to see that $q_{xx} A + q_{yy} B$ remains invariant.
Using Euler's theorem and~\eqref{eqn:xSx+ySy=2q2}, we then obtain
\[
     \left[ S^x q_x \tilde{A} + S^y q_y \tilde{B} \right] - \left[ S^x q_x A + S^y q_y B \right]  = c \left( x^2 S^x q_x - y^2 S^y q_y \right) = c \left( 2x q_x q^2 + x S^x q - 2q^3 \right).
\]
Subtracting the expressions for $(A,B)$ and $(\tilde{A},\tilde{B})$ shows that the right-hand side of~\eqref{eqn:RicciSoliton-Last-Sx} is invariant, so the equation is still satisfied for $(\tilde{A}, \tilde{B})$.
\end{proof}

\subsection{A family of singular steady solitons} \label{subsection:singular-solitons}
We produce a family of singular steady soliton metrics of type~\eqref{eqn:PaperMetric} with conformally cylindrical base, for which $q(x,y) = x^{\alpha} y^{1-\alpha}$.
We show that any local Ricci soliton metric satisfying the assumptions~\eqref{eqn:metric-assumptions} with conformally cylindrical base either has a surface factor $\Sigma_i$ of constant curvature, or must belong to this family. 
\begin{proposition}\label{prop:Fack(S7)}
All Ricci soliton metrics~\eqref{eqn:PaperMetric} with $q(x,y) = x^{\alpha} y^{1-\alpha}$ for $\alpha \neq \frac{1}{2}$ belong to a three-parameter family of steady singular solitons given by
\begin{equation}\label{eqn:singular-soliton-metrics}
\tag{\metricFaCK}
    \begin{split}
    g &= x^{-2 \alpha} y^{-2 (1-\alpha)} \left( \dfrac{dx^2}{F_{\alpha,c,k_1}(x)} +  \dfrac{dy^2}{F_{1-\alpha,-c,k_2}(y)} + F_{\alpha,c,k_1}(x) \, ds^2 + F_{1-\alpha,-c,k_2}(y) \, dt^2 \right), \\
    & \text{where } \; F_{\alpha,c,k}(t) := c t^2 + k t^{\mu_{\alpha} +1}, \qquad \mu_{\alpha} := 2 \alpha^2 / (2 \alpha - 1). 
    \end{split}
\end{equation}
These solitons specialize to the Ricci-flat Schwarzschild metrics from Example~\ref{ex:Einstein} when $\alpha = 0, 1$.
\end{proposition}\label{prop:singular-solitons}
\begin{proof}
    The cases $\alpha \in \{ 0,1 \}$ are treated in Proposition~\ref{prop:d2q=0}, so suppose $\alpha \neq 0,1$.
    Integrating~\eqref{eqn:pySxpxSy} --~\eqref{eqn:pySy} under these assumptions and using their homogeneity by Lemma~\ref{lemma:Cx=Cy=0}, we obtain $S^x = \mu_{\alpha} x^{-1} q^2 + C_0 y$ and $S^y = \mu_{1-\alpha} y^{-1} q^2 - C_0 x$, where we denoted $\mu_{\alpha} := 2 \alpha^2 / (2\alpha - 1)$ as above.
    In this case, equation~\eqref{eqn:A''B''-PDE} becomes
\begin{equation}\label{eqn:singular-PDE}
    (A'' - \mu_{\alpha} x^{-1} A') - (B'' - \mu_{1-\alpha} y^{-1} B') = C_0 x^{- 2 \alpha} y^{- 2(1-\alpha)} (A' y + B' x).
\end{equation}
If $C_0 \neq 0$, denote $\tilde{A}(x) := x^{- 2 \alpha} A'(x)$ and $\tilde{B}(y) := y^{-2(1-\alpha)} B'(y)$.
Taking $\px \py$ in~\eqref{eqn:singular-PDE} implies $x^{2 \alpha} \tilde{A}' = y^{2(1-\alpha)} \tilde{B}' = 2(2 \alpha -1)k$ for some constant $k$, so integrating this gives $A,B$ as
\[
A(x) = - k x^2 + c_1 x^{2 \alpha + 1} + k_{A,0}, \qquad B(y) = k y^2 + c_2 y^{2(1 -\alpha) + 1} + k_{B,0}.
\]
Using this in~\eqref{eqn:singular-PDE} gives a contradiction, since $c_1 c_2 \neq 0$ unless $A,B$ are both quadratic; thus, $C_0 = 0$.
Equation~\eqref{eqn:singular-PDE} therefore splits into ODEs with constant term $2c(1-\mu_{\alpha}) = - 2c(1-\mu_{1-\alpha})$, by $\mu_{\alpha} + \mu_{1-\alpha} = 2$.
These ODEs have solution $A(x) = F_{\alpha,c,k_1}(x) + k_{A,0}$ from~\eqref{eqn:singular-soliton-metrics} and likewise for $B$.
Equation~\eqref{eqn:RicciSoliton-Last-Sx} then has only single-variable terms except $\lambda q^{-2}$, so $\lambda = 0$.
Also, $k_{A,0} (\mu_{\alpha} + 1) = k_{B,0} (\mu_{1-\alpha} + 1) = 0$ and the vanishing is equivalent, since $\mu_{\alpha} +1 = 0$ makes $t^{\mu_{\alpha} +1} = 1$ in $F_{\alpha,c,k_1}(t)$.

Aside from the smooth Ricci-flat Schwarzschild metrics when $\alpha \in \{ 0,1 \}$, the soliton metrics given in equation~\eqref{eqn:singular-soliton-metrics} have singular cone angles when $A$ or $B$ vanish. 
Due to the symmetry of $g$ from~\eqref{eqn:singular-soliton-metrics}, we may assume that $\alpha > \frac{1}{2}$ and examine $A$ as $x \to 0^+$. 
Since $\mu_{\alpha} \geq 2$ for $\alpha > \frac{1}{2}$, the term $F_{\alpha, c, k_1} (x) = cx^2 + O(x^3)$ near $x = 0$ is locally quadratic, so the metric along the surface slice of constant $(y,t)$ is approximated by $g \approx \hat{g} = y^{-2(1-\alpha)}\left(c^{-1}x^{-2 -2\alpha}\,dx^2 + c x^{2 - 2\alpha}\, ds^2\right)$.
To write this $\hat{g}$ in the standard form~\eqref{eqn:generalized-cylinder}, we set $r = \int x^{-1-\alpha}\, dx$ and express $\hat{g} = y^{-2(1-\alpha)} \left( C_1'dr_1^2 + C_1''r_1^{2-\frac{2}{\alpha}}\,ds^2\right)$.
This metric $\hat{g}$ can only be smooth when $\alpha = 1$.
Otherwise, setting $\rho_1 = 1- r_1$ realizes $\hat{g}$ as equivalent to $d\rho^2 + (1 - \rho_1)^{2-\frac{2}{\alpha}} ds^2$, which has boundary singularities resembling those of the (\hyperref[cor:SexBuffet]{\metricSexBuffet}) metric, as illustrated in Figure~\ref{fig:geometricBoundary}.

For the asymptotics of the metric on the surface slice of constant $(x,s)$, as $y \to 0^+$, the term $y^{\mu_{1-\alpha} + 1} = y^{3 - \mu_{\alpha}}$ dominates in $F_{1-\alpha, -c, k_2}(y)$.
An analogous change of coordinates realizes the metric as asymptotic to $x^{-2\alpha}(C_2'd r_2^2 + C_2'' r_2^{\frac{6 (2 \alpha^2 - 1)}{2(3 \alpha - 2)^2 + 1}}dt^2)$, which likewise has boundary singularities for $\alpha < \frac{1}{\sqrt{2}}$, edge singularities for $\alpha \in ( \frac{1}{\sqrt{2}}, \infty) \setminus \{ 1 \}$ since the exponent of $r_2$ is positive $\neq 2$, and closes up smoothly when $\alpha = \frac{1}{\sqrt{2}}$ or $\alpha = 1$, the latter being the Schwarzschild metric.
The singular behavior as $(x,y) \to (0,0)$ is more complicated as these boundary singularities collide at the origin, the particular limiting behavior depending on the path taken.
\end{proof}
\begin{claim}
    When $q(x,y) = \sqrt{xy}$, there are no admissible soliton metrics of the form~\eqref{eqn:PaperMetric}.
\end{claim}
\begin{proof}
Clearing denominators by $q^2 = xy$ in~\eqref{eqn:A''B''-PDE}, applying $\px^2 \py^2$, and using~\eqref{eqn:p2ySx} gives $x A^{(3)} = y B^{(3)}$, so $A(x) = kx^2 \log x + Q_1(x), B(y) = ky^2 \log y + Q_2(y)$ for quadratics $Q_1(x) = \sum_{j=0}^2 a_j x^j$ and $Q_2$.
    We also obtain $S^x, S^y$ by integrating~\eqref{eqn:pySxpxSy} --~\eqref{eqn:pySy}, as
    \[
2 S^x = \left(  y \log(x/y) + \left( C_0 + 3 \right) y + C_2 \right), \qquad 2 S^y = \left( x \log (y/x) - \left( C_0 - 1 \right) x - C_1 \right).
\]
This results in a term $- kxy( \log x)^2$ in the expression~\eqref{eqn:A''B''-PDE} that does not cancel unless $k=0$.
The appearance of terms $x \log (x/y)$ and $y \log(x/y)$ in~\eqref{eqn:A''B''-PDE} forces $a_1 = b_1 = 0$; $b_2 = - a_2 = \ell$; and $C_1 = C_2 = 0$ or $\ell = 0$.
In either case, the equation resulting from~\eqref{eqn:RicciSoliton-Last-Sx} has a term $\left( a_0 x^{-2} -  b_0 y^{-2} \right) \log (x/y)$ not canceled by any other terms, unless $(a_0,b_0) = (0,0)$.
This is impossible for $\ell=0$ (then $A,B \equiv 0$), and for $\ell \neq 0$, there is another unmatched term $- \frac{1}{2} \ell xy(xy-1) \log(x/y)$ implying there are no solutions.
\end{proof}

\begin{proposition}\label{prop:homogeneous-rigidity}
    If $q$ is homogeneous with $D^2 q \neq 0$ and $A,B$ are not both quadratic, then $q = x^{\beta} y^{1-\beta}$ for some $\beta$ and the corresponding metric $g$ is~\eqref{eqn:singular-soliton-metrics}, given by Proposition~\ref{prop:Fack(S7)}. 
\end{proposition}
\begin{proof}
By Lemma~\ref{lemma:Cx=Cy=0}, $S^x, S^y$ are homogeneous of degree $1$, so the functions $\frac{S^x}{q^2}, \frac{S^y}{q^2}$ are homogeneous of degree $-1$.
First, we show that if neither of $A,B$ is a quadratic, they satisfy
    \begin{equation}\label{eqn:A-B-mu}
    \dfrac{A^{(4)} x}{\px (A'/x)} = \mu = \dfrac{B^{(4)}y}{\py (B'/y)}, \qquad \text{for some } \; \mu \neq 0.
    \end{equation}
Furthermore, if one of $A$ or $B$ is quadratic, then either both are, or $q = x^{\beta} y^{1-\beta}$ for some $\beta$ as in Proposition~\ref{prop:singular-solitons}.
The equations~\eqref{eqn:A''B''-PDE} and~\eqref{eqn:xSx+ySy=2q2} give a $2 \times 2$ system for the functions $\frac{S^x}{q^2}, \frac{S^y}{q^2}$.
\begin{equation}\label{eqn:Homogeneous-SolveForSxSyq2}
        \dfrac{S^x}{q^2} = \dfrac{2B' + y(A'' - B'')}{A' y + B' x}, \qquad \dfrac{S^y}{q^2} = \dfrac{2A' - x(A'' - B'')}{A' y + B'x}.
\end{equation}
If $A,B$ are not quadratics, the denominator does not vanish and neither do $\px (A'/x), \py (B'/y)$.
Euler's formula shows that if $u/v$ is homogeneous of degree $-1$, then
\begin{equation}\label{eqn:uvhomdeg-1}
        u \left( x \px v + y \py v \right) - \left( x \px u + y \py u \right) v = uv.
\end{equation}
Applying this to the expression for $S^y/q^2$ from~\eqref{eqn:Homogeneous-SolveForSxSyq2} shows that 
\begin{equation}\label{eqn:Syq2-after-Euler}
    \dfrac{S^y}{q^2} = \dfrac{1}{y} - \dfrac{1}{y} \dfrac{xA^{(3)} - yB^{(3)}}{A'' + B''},
\end{equation}
whereby equating the expressions~\eqref{eqn:Homogeneous-SolveForSxSyq2} and~\eqref{eqn:Syq2-after-Euler} for $S^y/q^2$ implies
\begin{equation}\label{eqn:Homogeneity-A''-B''}
    (A'' + B'') \left[ (A'y - B' x) - xy (A'' - B'') \right] + (A' A^{(3)} - B' B^{(3)}) xy + A^{(3)} B' x^2 - A' B^{(3)} y^2 = 0.
\end{equation}
Denote this expression by $\cL$.
The computation $\px \py \left( \cL / (xy) \right) = 0$ is equivalent to
\[
A^{(4)} x \py (B'/y) = \px (A'/x) B^{(4)} y,
\]
which implies~\eqref{eqn:A-B-mu} provided that $\px (A'/x), \py(B'/y)$ are not $0$. 
In this case, $\mu \neq 0$; otherwise $A^{(4)} = B^{(4)} = 0$ would require $A,B$ to be cubics $\sum a_i x^i, \sum b_j y^j$. 
Equation~\eqref{eqn:Syq2-after-Euler} would then imply $a_2 + b_2 = 0$ and $S^y = 2 q^2/(\alpha x + y)$. 
The relation $\py S^y = 2 q_y^2$ forces $q_y = q/(\alpha x + y)$, and likewise for $q_x$.
This requires $(q,A,B)$ to correspond to the \pbnski metric~\eqref{metric:pbnski}, ruled out by $D^2 q \neq 0$.

If $\px (A'/x) \py (B'/x) = 0$, by symmetry we can write $A(x) = c x^2 + a_0$, so either $B(y) = - cy^2 + b_0$ or $\dfrac{S^y}{q^2} = \dfrac{2 c +B''}{2 c y+B'}$ is defined.
Equation~\eqref{eqn:Homogeneity-A''-B''} simplifies to a second-order ODE for $B'(y)$ with solution $B'(y) = - 2cy + \tilde{c}_2 y^{\tilde{c}_1}$ for some $\tilde{c}_i$.
This gives $S^y = \tilde{c}_3 q^2/y$, so $S^x = (2 - \tilde{c}_3) q^2/x$.
By~\eqref{eqn:pySxpxSy} --~\eqref{eqn:pySy} and the homogeneity relations, this implies that $q = x^{\beta} y^{1-\beta}$, as treated in Proposition~\ref{prop:singular-solitons}.

If neither $A$ nor $B$ is quadratic, equation~\eqref{eqn:A-B-mu} must hold for $\mu \neq 0$.
The equations~\eqref{eqn:pxSx} and~\eqref{eqn:pySy} yield
\begin{equation}\label{eqn:Sxq-quadratic}
    \left( \dfrac{S^i}{q^2} \right)^2 + 2 \, \partial_i \left( \dfrac{S^i}{q^2} \right) = \left( \dfrac{S^i}{q} - 2 \dfrac{q_i}{q} \right)^2, \qquad i \in \{ x,y\},
\end{equation}
so $q_x/q, q_y/q$ can be computed using the quadratic formulas.
Using~\eqref{eqn:Syq2-after-Euler}, define
\begin{equation}\label{eqn:define-F-x,y}
F(x,y) := \dfrac{xA^{(3)} - yB^{(3)}}{A'' + B''}, \quad \text{so } \quad \; \dfrac{S^x}{q^2} = \dfrac{1+F}{x}, \quad \dfrac{S^y}{q^2} = \dfrac{1-F}{y}.
\end{equation}
Solving~\eqref{eqn:Sxq-quadratic} for $q_x/q$ and $q_y/q$ gives, for some sign $\epsilon = \pm 1$,
\begin{equation}\label{eqn:syq2-u-soln}
    \dfrac{q_x}{q} = \dfrac{1}{2x} \left( 1 + F + \epsilon \sqrt{F^2 + 2 x \px F - 1} \right), \quad \dfrac{q_y}{q} = \dfrac{1}{2y} \left( 1 - F - \epsilon \sqrt{F^2 + 2 x \px F - 1} \right).
\end{equation}
It follows from these formulas that $F^2 + 2 x \px F - 1 \geq 0$.

Equations~\eqref{eqn:A-B-mu} for $A, B$ can be integrated explicitly, with solutions
\begin{equation}\label{eqn:A''''-solutions}
    A(x) = a_0 + a_2 x^2 + a_+ x^{2 + \alpha} + a_- x^{2 - \alpha}, \quad \alpha^2 = \mu + 1, \qquad \text{for } \; \mu \neq - 1
\end{equation}
and likewise for $B(y)$.
For $\mu = 3$, the solutions specialize to $A(x) = a_0 + a_2 x^2 + a_+ x^4 + a_- \log x$; for $\mu = -1$, they specialize to $A(x) = a_0 + x^2( a_2 + a_+ (\log x)^2 + a_- \log x)$.
For $\mu + 1 <0$, $\alpha = i \tilde{\alpha}$ is purely imaginary, so $x^{2 \pm \alpha} = x^2 \left[ \cos (\tilde{\alpha} \log x) \pm i \sin (\tilde{\alpha} \log x) \right]$ still satisfies the power rule for derivatives.
In the case $\mu = - 1$, computing~\eqref{eqn:Syq2-after-Euler} and taking $(x,y) \mapsto (tx,ty)$ requires the $(\log t)^2$ term to vanish and the expression to equal its limit as $t \to + \infty$, whereby $a_+ = - b_+ = k$ and $4k(a_2 + b_2) = a_-^2 - b_-^2$.
Then, $F$ from~\eqref{eqn:define-F-x,y} is given as $F(x,y) = \frac{4k}{a_- + b_- + 2k \log(x/y)}$, so $F^2 + 2 x \px F = 0$, contradicting~\eqref{eqn:syq2-u-soln}.
Therefore, $\mu = -1$ admits no solutions.

In all other cases, the formula~\eqref{eqn:A''''-solutions} holds and may be differentiated to yield
\begin{equation}\label{eqn:A''-A'''-homogeneous}
A''(x) = 2 a_2 + \tilde{a}_+ x^{\alpha} + \tilde{a}_- x^{- \alpha}, \qquad A^{(3)}(x) = \alpha \left( \tilde{a}_+ x^{\alpha - 1} - \tilde{a}_- x^{-\alpha - 1} \right).
\end{equation}
where $\tilde{a}_{\pm} := (2 \pm \alpha)(1 \pm \alpha) a_{\pm}$, same for $\tilde{b}_{\pm}$.
For $\mu = 3$, we instead obtain $\tilde{a}_- = - a_-$, so $\tilde{a}_i \neq 0$ if $a_i \neq 0$ still.
If $A,B$ are not both quadratic, we have that $(a_+,b_+)$ and $(a_-,b_-)$ in~\eqref{eqn:A''''-solutions} are not both $(0,0)$; by the symmetry in $\pm \alpha$, let $(a_+,b_+) \neq (0,0)$.
Since $S^y/q^2$ is homogeneous of degree $-1$, the equality~\eqref{eqn:Syq2-after-Euler} implies that $(xA^{(3)} - yB^{(3)})/(A'' + B'')$ is homogeneous of degree $0$.
We can rewrite this expression using~\eqref{eqn:A''-A'''-homogeneous}; taking $(x,y) \mapsto (tx,ty)$ here and examining the powers of $t^{\pm \alpha}$ gives $\tilde{a}_+ \tilde{a}_- = \tilde{b}_+ \tilde{b}_-$, so $a_+ a_- = b_+ b_- = k$ for some $k$.
Comparing the constant terms gives $a_2 = -b_2 =: c$.

For the remainder of the proof, we will define $X := a_+ x^{\alpha}, Y := b_+ y^{\alpha}$, so $\px X = \frac{\alpha X}{x}, \py Y = \frac{\alpha Y}{y}$.
Using $a_2 + b_2 = 0$, $a_+ a_- = b_+ b_- = k$, and~\eqref{eqn:A''-A'''-homogeneous}, we rearrange $F$ into
\begin{align*}
    F(x,y) &= \dfrac{x A^{(3)} - yB^{(3)}}{A'' + B''} = \alpha \dfrac{(X - kX^{-1}) - (Y - k Y^{-1})}{X + k X^{-1} + Y + k Y^{-1}} = \alpha \dfrac{X-Y}{X+Y}, \\
    & \implies F^2 + 2 x \px F -1 = \alpha^2 \dfrac{(X-Y)^2}{(X+Y)^2} + \alpha^2 \dfrac{4XY}{(X+Y)^2} - 1 = \alpha^2 - 1
\end{align*}
This implies that $|\alpha| \geq 1$ and integrating~\eqref{eqn:syq2-u-soln} shows
\begin{equation}\label{eqn:q-general-solution}
    q = \sqrt{xy} (x/y)^{\frac{1}{2} \epsilon \sqrt{\alpha^2 - 1}} (xy)^{- \alpha / 2} \left(a_+ x^{\alpha} + b_+ y^{\alpha} \right), \qquad \epsilon = \pm 1.
\end{equation}
The last term is simply $X+Y \neq 0$, by $(a_+,b_+) \neq (0,0)$.
For $\alpha = \pm 1$, the function $q$ from~\eqref{eqn:q-general-solution} becomes linear, giving the \pbnski solution, so $|\alpha| > 1$ for $D^2 q \neq 0$.
Observe that $q$ specializes to $x^{\beta} y^{1-\beta}$ as in Proposition~\ref{prop:singular-solitons} when $a_+ b_+ = 0$ or $a_- b_- = 0$, so $k=0$ above.
We show that this the only solution.
For any other solution, we have $XY \neq 0$, so~\eqref{eqn:A''''-solutions} becomes
\begin{equation}\label{eqn:A,B-homogeneous-form}
    A(x) = a_0 + x^2 \left( c + X + k X^{-1} \right), \qquad B(y) = b_0 + y^2 \left( - c + Y + k Y^{-1} \right)
\end{equation}
and by~\ref{prop:transform-A-B-c}, we may take $c=0$.
In terms of $X,Y$,~\eqref{eqn:q-general-solution} gives $q^{-2} = (xy)^{-1} (y/x)^{\epsilon \sqrt{\alpha^2 - 1}} \frac{XY}{(X+Y)^2}.$
Having computed $q,A,B$ in~\eqref{eqn:q-general-solution} and~\eqref{eqn:A,B-homogeneous-form} as well as $S^x, S^y$ using~\eqref{eqn:Syq2-after-Euler} and $F = \alpha \frac{X-Y}{X+Y}$, we can explicitly write out the last soliton equation~\eqref{eqn:RicciSoliton-Last-Sx}.
Clearing the denominator $(X+Y)^2$ and using $\alpha \geq 1$, the resulting equation has single-variable monomials of highest degree $\alpha(2+\alpha)(X^2 + Y^2)$ which are not matched, so the equation has no solution for $XY \neq 0$. 
This means that $A,B$ are both quadratics, or $q$ is given by~\eqref{eqn:singular-soliton-metrics}.
\end{proof}

\subsection{Conformally flat and scalar flat solitons}\label{subsect:solitons-conformal-S2H2}
The conformally cylindrical framework is motivated by studying metrics conformal to a product of two surfaces of constant curvature.
We show that the curvatures of the two surfaces must be opposite, so the metric is locally conformally flat.
This furthermore implies that the conformal factor $q$ is homogeneous of degree $1$, and the Ricci soliton equation is equivalent to a third-order ODE for $f$ with $q(x,y) = yf(x/y)$.
For steady and shrinking solitons, such examples must have boundary or be singular, by the classification of \cite{catino-mantegazza-mazzieri}.
We produce such soliton metrics, as well as complete locally conformally flat expanding solitons not covered by the above classification.

The metric~\eqref{eqn:PaperMetric} in this context is expressed with $A$ and $B$ quadratics. 
Up to shifting, we can assume the linear term vanishes so $A = ax^2 + a_0$ and $B = by^2 + b_0$ where $-2a$ and $-2b$ are the Gauss curvatures of the surfaces.
Equation~\eqref{eqn:A''B''-PDE} explicitly yields
\begin{equation}\label{eqn:abSxSy-quadratics}
(a - b)q^2 = a xS^x - b y S^y
\end{equation}
and differentiating twice by $\px^2$ and $\py^2$ and using equations~\eqref{eqn:pySxpxSy} --~\eqref{eqn:p2ySx} further shows that 
\begin{equation}\label{eqn:CPSF-two-cases}
2 \left( a x q_x - by q_y \right) - (a-b) q =
- (a+b) \dfrac{q_x^2}{q_{xx}} =  (a+b) \dfrac{q_y^2}{q_{yy}}
\end{equation}
after rearranging the terms and dividing by $q_{xx}$ and $q_{yy}$.
This requires either $a + b = 0$ or $\frac{q_{xx}}{q_x^2} = -\frac{q_{yy}}{q_y^2}$.
By~\eqref{eqn:HessianCompatibility}, the latter case implies that $D^2q$ has zeroes, which is handled in Proposition~\ref{prop:d2q=0}.

When $a + b = 0$, the local model of the product without the conformal factor is either $\bR^4$, for $a = b = 0$, or $\bS^2 \times \bH^2$, by rescaling according to expression~\eqref{eqn:scale-rescale-translate}, so that the Gauss curvature of each factor is $\pm 1$. 
If a conformal factor $q(x,y)$ is admissible for $\{ a, b\} = \{ -1 , 1 \}$, then we may apply Proposition~\ref{prop:transform-A-B-c} and rescale $x,y$ to obtain conformally flat metrics with the same $q$,
\begin{equation}\label{eqn:ConformallyFlat}
    g = \frac{1}{q(x,y)^2}\left(dx^2 + dy^2 + ds^2 + dt^2 \right).
\end{equation}
This situation, corresponding to $a = b = 0$ was previously analyzed in Section~\ref{subsect:conformally-flat}; we now focus on $\{ a, b \} = \{ -1, 1 \}$, so the resulting metric is conformally scalar flat and given by
\begin{equation}\label{eqn:CPSFMetric}
    g = \frac{1}{q(x,y)^2}\left(\frac{dx^2}{a_0 - x^2} + (a_0 - x^2 )\, ds^2 +\frac{dy^2}{y^2 + b_0} + (y^2 + b_0)\, dt^2 \right)
\end{equation}
Here, equation~\eqref{eqn:abSxSy-quadratics} gives $xS^x + yS^y = 2q^2$, whereby $S^x, S^y$ are homogeneous by Lemma~\ref{lemma:Cx=Cy=0}.
By Corollary~\ref{corollary:homogeneous-SxSy-equivalent}, the conditions~\eqref{eqn:pySxpxSy} --~\eqref{eqn:pySy} are all equivalent to each other, so it suffices to ensure $\px S^x = 2 q_x^2$.
By Proposition~\ref{prop:transform-A-B-c}, we may consider $(A,B) = (a_0 - cx^2, b_0 + cy^2)$, where $c$ is arbitrary.
We can solve equations~\eqref{eqn:A''B''-PDE} and~\eqref{eqn:RicciSoliton-Last-Sx} as a $2 \times 2$ system in $S^x, S^y$, unless they are linearly dependent.
As computed in~\eqref{eqn:Sx-system-solution-general}, linear dependence is equivalent to $AB' q_x + A'B q_y - \frac{1}{2} A' B' q = 0$.
Substituting $A$ and $B$ and using $x q_x + y q_y = q$ transforms this equality into $\frac{q_x}{q_y} = \frac{b_0 x}{a_0 y}$, so $q = \sqrt{b_0 x^2 + a_0 y^2}$.
In this case, equation~\eqref{eqn:RicciSoliton-Last-Sx} becomes an identity with $\lambda = - 2a_0 b_0$, due to $x S^x + y S^y = 2q^2$.
We therefore obtain the soliton metric
\begin{equation}\label{metric:q=sqrtx2+y2}
\tag{\metricQIsSqrtYSqXSq}
g = \frac{1}{a_0y^2 + b_0 x^2} \left( \frac{dx^2}{a_0-x^2} + (a_0-x^2) \, ds^2 + \frac{dy^2}{y^2 + b_0} + (y^2 + b_0) \, dt^2 \right).
\end{equation}
Since $a_0 > 0$ and $c \geq 0$, this metric corresponds to a complete expanding soliton for $b_0 > 0$; a steady Riemannian Schwarzschild metric with surface factors degenerating to quadratics for $b_0 = 0$ (i.e., $q = \sqrt{a_0} y$); and a shrinking soliton for $b_0 < 0$.
By~\eqref{eqn:SxSy}, the soliton equation is satisfied by $V$ with
\[
V = \left( \tilde{c} - 2 \epsilon \sqrt{a_0 |b_0|} \cdot \arctan \left( \frac{\sqrt{|b_0|} x}{\sqrt{a_0} y} \right) \right) V_0, \qquad V_0 = (a_0 - cx^2) y \, \px - (b_0 + cy^2) x \, \py
\]
for $\epsilon = \on{sgn}(b_0)$.
Note that the vector field $V_0$ is a Killing field, so changing $\tilde{c}$ does not alter $\cL_V g$.

Otherwise, Corollary~\ref{corollary:homogeneous-SxSy-equivalent} guarantees that conditions~\eqref{eqn:pySxpxSy} --~\eqref{eqn:RicciSoliton-Last-Sx} for $q$ are reduced to $\px S^x = 2 q_x^2$, where $S^x$ is obtained by solving the $2 \times 2$ system~\eqref{eqn:A''B''-PDE} and~\eqref{eqn:RicciSoliton-Last-Sx} in $S^x, S^y$. 
Solving the resulting $2 \times 2$ system from equation~\eqref{eqn:Sx-system-solution-general}, we compute
\[
S^x = \dfrac{2 b_0 q_y q^2 + y \left( \lambda q + b_0 q q_y^2 - b_0 q^2 q_{yy} + a_0 q q_x^2 - a_0 q^2 q_{xx}  \right)}{b_0 x q_y - a_0 y q_x}
\]
The condition $\px S^x = 2 q_x^2$ therefore amounts to $\cF (q, Dq, D^2 q, D^3 q; a_0, b_0, \lambda) = 0$, where we define
\begin{equation}\label{eqn:f-q-dq-d2q-d3q-a,b,l}
\begin{split}
    \cF (q,Dq, D^2 q, D^3 q; a_0, b_0, \lambda) &:= - 3 y^2 \left( a_0 q_x^2 + b_0 q_y^2 \right)^2 - \lambda y \left[ y \left( a_0 q_x^2 + b_0 q_y^2 \right) + x^{-1} (a_0 y^2 + b_0 x^2) q q_{xy} \right] \\
    & \; \; \; - 5 a_0 b_0 x^{-1} y q^3 q_{xy} + x^{-2} ( a_0 y^2 + b_0 x^2 )^2 q q_{xy} \left( q_x q_y - q q_{xy} \right) \\
    & \; \; \; + x^{-1} (a_0 y^2 + b_0 x^2 ) \left( b_0 x q_y - a_0 y q_x \right) q^2 q_{xxy}.
\end{split}
\end{equation}
Since $q$ is homogeneous, we may write $q = yf( \frac{x}{y})$ and express the derivatives of $q$ in terms of $z := \frac{x}{y}$, where 
\begin{equation}\label{eqn:q-derivatives-z-f}
\px z = \frac{1}{y}, \quad \py z = - \frac{z}{y}, \qquad \partial^{\alpha}_x \partial^{\beta}_y q = y^{1 - (\alpha + \beta)} \sum_{k=0}^{\beta} c_k z^k f^{(k+\alpha)}(z).
\end{equation}
The condition $\cF(D^{0 \leq i \leq 3}q; a_0, b_0, \lambda) = 0$ is therefore transformed into the ODE
\begin{equation}\label{eqn:resulting-ODE}\tag{$\star_{a_0,b_0,\lambda}$}
\begin{split}
    0= & - 3 \left( a_0 (f')^2 + b_0(f-zf')^2 \right)^2 + \lambda \left[ - \left( a_0 (f')^2 + b_0 (f - zf')^2 \right) + (a_0 + b_0 z^2) f f'' \right] \\
    & + (4a_0 -b_0 z^2) b_0 f^3 f'' + \left( a_0 + b_0 z^2 \right)^2 f f'' \left( (f')^2 - f f'' \right) - b_0 z ( b_0 z f - (a_0 + b_0 z^2) f') f^2 f'' \\
    & - \left( a_0 + b_0 z^2 \right)
    \left( b_0 zf - (a_0 + b_0 z^2) f' \right) f^2 f^{(3)}.
\end{split}
\end{equation}
A solution $f$ gives a homogeneous factor $q = y f(x/y)$ such that the associated metric and vector field satisfy the soliton equation.
The function $f(x) = q(x,1)$ is the stereographic projection of the homogeneous function onto a line.
In order for $q$ to be well-defined on the boundary points, we need the limits $\lim_{z \to \pm \infty }\frac{f(z)}{z}$ to exist and be equal, which corresponds to rescaling the line to the circle and approaching the poles.
The equation is symmetric in $(x,y) \leftrightsquigarrow (-x,-y)$.
If the limits $\lim_{z \to \pm \infty} \frac{f(z)}{z}$ agree, the function $q = yf(x/y)$ is well-defined on all of $\bR^2$ since the points $(1,0)$ and $(-1,0)$ correspond to these limits. 

By expressing the ODE as a system of three first-order ODEs, we can use the Picard-Lindel\"of theorem to show that a unique solution exists for short time, given initial data, and can be extended maximally. 
The coefficients are given by smooth combinations of $f,f',f''$, so as long as these functions are smooth, the coefficients of the system will be smooth, and satisfy the conditions to apply short-time existence and uniqueness.
If the third derivative $f^{(3)}$ exists, there is no obstruction to local existence and uniqueness, so we know that $f$ is smooth where $f^{(3)}$ exists.

When $f$ solves~\eqref{eqn:resulting-ODE}, we therefore obtain Ricci soliton metrics of the form
\begin{equation}\label{metric:SStar}\tag{S$\bigstar$}
    g = \frac{1}{y^2f(x/y)^2}\left( \frac{dx^2}{a_0-x^2} + (a_0-x^2) \, ds^2 + \frac{dy^2}{y^2 + b_0} + (y^2 + b_0) \, dt^2 \right)
\end{equation}
In the following section, we analyze particular instances of~\eqref{metric:SStar} and exhibit families of complete expanding solitons as well as shrinking and steady solitons with boundary that specialize to known examples. 
The analytic properties of $f$ solving~\eqref{eqn:resulting-ODE} determine various geometric behaviors of the resulting solitons.
The soliton metric obtained in~\eqref{metric:q=sqrtx2+y2} is equivalent to the explicit solution of the ODE~\eqref{eqn:resulting-ODE} given by $f(z) = \sqrt{a_0 + b_0 z^2}$ and $\lambda = - 2 a_0 b_0$.

The above results can be collected into the following statement:
\begin{corollary}\label{corollary:q-a-b-quadratics}
    If $q$ is homogeneous and $A, B$ are both quadratic, then up to translating $x,y$, they have the form $(a_0 - cx^2, b_0 + cy^2)$.
    The conformal factor $q$ is locally expressible as $q = y f(x/y)$, where $f$ solves the equation~\eqref{eqn:resulting-ODE}.
    The existence of such $q$ is independent of the leading coefficient $c$ of $A,B$.
\end{corollary}

The study of homogeneous factors $q$ also produces a family of conformally Euclidean metrics $g = q^{-2} g_{\on{Euc}}$ satisfying the conditions of Lemma~\hyperref[lemma:conformally-flat]{\ref{lemma:conformally-flat}(iii)}.
This follows from taking $a_0 = b_0 = 1$ and $c=0$ in the above result, whereby for $f(z)$ any solution of the ODE (\hyperref[eqn:resulting-ODE]{$\star_{1,1,\lambda}$}) and arbitrary $x_0, y_0$, defining $q = (y-y_0) f \left( \frac{x-x_0}{y-y_0} \right)$ produces a valid conformal factor for a Ricci soliton metric $g = q^{-2} g_{\on{Euc}}$.

\subsection{Shrinking and steady solitons with boundary}\label{subsec:shrinkingBoundarySolitons}

By choosing appropriate initial conditions for $f$, we can produce examples of shrinking and steady solitons with boundary by solving the ODE~\eqref{eqn:resulting-ODE} to ensure that the solution $f$ has two zeroes.
These solitons correspond to Figure~\hyperref[fig:solitonsDomainGeometryEx]{\ref{fig:solitonsDomainGeometryEx}(b)} or~\hyperref[fig:solitonsDomainGeometryEx]{(c)}.
Figure~\ref{fig:geometricBoundary} illustrates their geometric behavior near the boundary, where the curvature blows up.

Ricci solitons with boundary can be viewed as self-similar solutions of the Ricci flow on manifolds with boundary. 
A key difficulty of this problem is prescribing meaningful boundary conditions for the Ricci flow.
The work of Shen \cite{shen} yields convergence results when the metric is rotationally symmetric or has positive Ricci curvature, and the boundary satisfies additional conditions.
The general case of surfaces was studied by Brendle \cite{brendle-boundary}.
Kunikawa-Sakurai \cite{kunikawa-sakurai} recently obtained analogs of classification results for Ricci solitons with boundary.
Higher-dimensional Ricci flow with boundary has been studied by Chow \cite{chow} and Gianniotis \cites{gianniotis-1, gianniotis-2}.
In our case, the creation of a boundary along which the curvature blows up is similar to the behavior studied in \cite{singular-solitons}.
This metric singularity of the curvature demonstrates that the boundary encloses a maximal domain on which the soliton equation can be solved.
The Riemannian metric on the interior does not descend to the topological boundary, but the metric completion topologically agrees with the boundary charts and gives the boundary its own metric, as described in Corollary~\ref{cor:completeness/Singularity}.

\begin{proposition}\label{prop:SexBuffet}
    For $a_0, b_0 > 0$ and $\lambda \geq 0$, there exist positive solutions $f$ to the ODE~\eqref{eqn:resulting-ODE} and some $L_1 < 0 < L_2$ such that $f \in C^\infty(L_1,L_2) \cap C^{0}[L_1, L_2]$ and $f(L_1) = f(L_2) = 0$.
\end{proposition}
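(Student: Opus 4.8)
The plan is to construct $f$ by a shooting argument for the first-order system equivalent to \eqref{eqn:resulting-ODE}, using the monotone resolvent coming from $S^x$ both to desingularize the ODE and to drive the solution down to a zero. Since $q = y f(z)$ with $z = x/y$ is homogeneous of degree one and $q_{xy}\neq 0$, Lemma \ref{lemma:Cx=Cy=0} and Corollary \ref{corollary:homogeneous-SxSy-equivalent} show $S^x, S^y$ are homogeneous of degree one; write $S^x = y\,\sigma(z)$. Then $\partial_x S^x = \sigma'(z)$, so \eqref{eqn:pxSx} reads $\sigma' = 2(f')^2 \ge 0$, i.e.\ $\sigma$ is nondecreasing, and \eqref{eqn:xSx+ySy=2q2} fixes $S^y = y(2f^2 - z\sigma)$. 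Working in the conformally flat normalization $A=a_0$, $B=b_0$ (permissible by Claim \ref{claim:transform-A-B-c}), equation \eqref{eqn:RicciSoliton-Last-Sx} is linear in both $f''$ (through $q_{xx},q_{yy}$) and $\sigma$ (through $S^x,S^y$), and solving it for $f''$ gives $f'' = G(z,f,f',\sigma)$ with denominator $(a_0 + b_0 z^2)f^2$. Thus $(f,f',\sigma)$ solves a first-order system that is smooth on $\{f>0\}$, so Picard--Lindel\"of applies there. This is the crucial point: the apparent singularity of \eqref{eqn:resulting-ODE}, where its leading coefficient $(a_0+b_0z^2)f^2\big((a_0+b_0z^2)f' - b_0 z f\big)$ vanishes, disappears in the $(f,f',\sigma)$ formulation, the two roots of the $z=0$ constraint corresponding to our branch and to the characteristic family $f\propto\sqrt{a_0+b_0z^2}$ of \eqref{metric:q=sqrtx2+y2}.

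By the $z\mapsto -z$ symmetry of \eqref{eqn:resulting-ODE} I look for an even solution and solve on $z\ge 0$ with data $f(0)=\beta>0$, $f'(0)=0$, $\sigma(0)=\sigma_0$; the relation then forces $f''(0) = (3b_0\beta^2 + \lambda)/(a_0\beta) > 0$, so $z=0$ is a strict local minimum and $f$ initially increases. I use $\sigma_0$ as the shooting parameter. For $\sigma_0$ small the solution should behave like the complete expander, increasing without a zero; for $\sigma_0$ large the term $\sigma\big((a_0+b_0z^2)f'-b_0zf\big)$ in $G$ becomes strongly negative once $z>0$, forcing $f''<0$, so $f$ attains an interior maximum and then decreases. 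The aim is to show that for $\sigma_0$ large enough the decreasing branch reaches $0$ at a finite $z=L_2$, using $\sigma'=2(f')^2\ge 0$ together with the sign $-3Q^2-\lambda Q\le 0$, valid since $\lambda\ge 0$, where $Q=a_0(f')^2+b_0(f-zf')^2$, to prevent $f$ from leveling off at a positive value.

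Near a first zero $z=L_2$ the dominant balance $f''\sim \sigma D/\big((a_0+b_0z^2)f^2\big)$ with $D=(a_0+b_0z^2)f'-b_0zf\to(a_0+b_0L_2^2)f'(L_2^-)$ forces the self-similar profile $f\sim C\sqrt{L_2-z}$. Hence $L_2<\infty$, $f\to 0$, and $f'\to-\infty$, so $f\in C^\infty(0,L_2)\cap C^0[0,L_2]$ but is not $C^1$ up to the endpoint, matching the curvature blow-up at the boundary. Reflecting evenly gives $L_1=-L_2<0<L_2$ with $f(L_1)=f(L_2)=0$ and $f>0$ on $(L_1,L_2)$, producing the \eqref{metric:SStar} metrics of Figure \ref{fig:solitonsDomainGeometryEx}(b)--(c).

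The hard part will be the global control in the shooting step: establishing a priori bounds on $(f,f',\sigma)$ over compact $z$-intervals, proving that the large-$\sigma_0$ regime genuinely produces an interior maximum followed by descent all the way to $0$ rather than asymptotic leveling or re-ascent, and that the small-$\sigma_0$ regime stays positive, so that some $\sigma_0$ yielding $0<L_2<\infty$ exists. This requires a monotonicity or barrier argument tailored to $\lambda\ge 0$, since for $\lambda<0$ (the expander regime of \eqref{metric:q=sqrtx2+y2}) the solution need not return to zero.
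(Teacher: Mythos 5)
Your reformulation of \eqref{eqn:resulting-ODE} as the first-order system $f''=G(z,f,f',\sigma)$, $\sigma'=2(f')^2$ is correct as far as it goes (I verified the forced value $f''(0)=(3b_0\beta^2+\lambda)/(a_0\beta)$ and your identification of the two roots of the $z=0$ constraint), but the proposal does not prove Proposition \ref{prop:SexBuffet}: the assertion that some solution actually descends to zero at finite $L_1<0<L_2$ \emph{is} the proposition, and that is precisely the step you leave open as ``the hard part.'' Local well-posedness on $\{f>0\}$, even reflection, and the formal $\sqrt{L_2-z}$ balance at a zero are scaffolding; none of them produces $L_2$. Worse, the heuristic that is supposed to drive the shooting has the wrong sign exactly where you invoke it: on your branch, $D:=(a_0+b_0z^2)f'-b_0zf$ satisfies $D(0)=0$ and $D'(0)=a_0f''(0)-b_0\beta=(2b_0\beta^2+\lambda)/\beta>0$, so $D>0$ for small $z>0$, and a large positive $\sigma_0$ makes the term $\sigma D$ in $G$ strongly \emph{positive}, reinforcing $f''>0$ rather than forcing $f''<0$; since large $f''$ in turn keeps $D$ positive, this feedback could just as well prevent the interior maximum from ever forming. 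So the dichotomy ``small $\sigma_0$ stays positive / large $\sigma_0$ descends'' is not only unproven but directionally dubious, and no intermediate-value or barrier structure is identified that would close it.

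For comparison, the paper's proof avoids shooting altogether: it prescribes concave data $f(0)=C_1>0$, $f'(0)=0$, $f''(0)=-C_2<0$ for the third-order ODE, shows that strict concavity propagates by a sign analysis of the expression \eqref{eqn:f'''} for $f^{(3)}$ --- at a putative first zero $\zeta$ of $f''$ the quantity $(a_0+b_0\zeta^2)f'(\zeta)-b_0\zeta f(\zeta)$ would have to have sign $\on{sgn}\zeta$, while $\on{sgn} f'=-\on{sgn} z$ forces its sign to be $-\on{sgn}\zeta$; this is where $\lambda\ge 0$ enters --- and shows that $f^{(3)}$ cannot blow up while $f>0$, so the solution extends until $f$ vanishes; strict concavity with $f'(0)=0$ then yields two finite zeroes. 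Incidentally, your forced-value computation is in genuine tension with that choice of data: at $z=0$ with $f'(0)=0$ the coefficient of $f^{(3)}$ in \eqref{eqn:resulting-ODE} vanishes, and the equation degenerates to an algebraic constraint whose two roots for $f''(0)$ are $(3b_0 f(0)^2+\lambda)/(a_0 f(0))$ and $b_0 f(0)/a_0$, both positive, so no solution with $f'(0)=0$ can have $f''(0)<0$ there. That observation --- a byproduct of your resolvent formulation --- is worth raising with the authors, but it does not repair your own argument: as it stands, the decisive global step of the proposal is missing.
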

\begin{corollary}[Shrinking and steady solitons with boundary]\label{cor:SexBuffet}
    For $a_0, b_0 > 0$, there exist shrinking and steady solitons with boundary $\mr{(\metricSexBuffet)}$ given by~\eqref{metric:SStar} for $f$ as in Proposition~\ref{prop:SexBuffet} defined in the domain $\{|x| < a_0\} \cap \{y > L_1x\} \cap \{y > L_2x\}$.
    Using Proposition~\ref{prop:transform-A-B-c}, we similarly have shrinking and steady solitons with boundary on the domain $\{\tilde{y} > \tilde{L_1}\tilde{x}\} \cap \{\tilde{y} > \tilde{L_2}\tilde{x}\}$ where  $(\tilde{x},\tilde{y},\tilde{s},\tilde{t}) = \left(\frac{x}{\sqrt{a_0}}, \frac{y}{\sqrt{b_0}},\sqrt{a_0}s, \sqrt{b_0}t\right)$ and $\tilde{L}_i = L_i \sqrt{a_0/b_0}$.
    The former are the conformally scalar flat examples, and the latter are conformally flat.
\end{corollary}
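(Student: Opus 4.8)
The plan is to derive Corollary~\ref{cor:SexBuffet} from Proposition~\ref{prop:SexBuffet} together with the reduction already carried out in Section~\ref{subsect:solitons-conformal-S2H2}, so that the only genuinely analytic input is the existence of the boundary profile $f$. Fixing $a_0,b_0>0$ and $\lambda\ge 0$, I would take the function $f$ furnished by Proposition~\ref{prop:SexBuffet} and set $q=yf(x/y)$. Since $f$ solves \eqref{eqn:resulting-ODE} on $(L_1,L_2)$ and $q$ is homogeneous of degree $1$, the argument of Section~\ref{subsect:solitons-conformal-S2H2} applies verbatim: equation \eqref{eqn:abSxSy-quadratics} gives $xS^x+yS^y=2q^2$, Lemma~\ref{lemma:Cx=Cy=0} makes $S^x,S^y$ homogeneous, and Corollary~\ref{corollary:homogeneous-SxSy-equivalent} collapses the system \eqref{eqn:pySxpxSy}--\eqref{eqn:pySy} to the single identity $\px S^x=2q_x^2$, which is exactly what \eqref{eqn:resulting-ODE} encodes. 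Hence by Corollary~\ref{corollary:exist-Sx-Sy} the metric \eqref{metric:SStar} with this $q$, together with the vector field recovered from \eqref{eqn:SxSy}, satisfies $\on{Ric}+\tfrac12\cL_V g=\lambda g$ on the open chart; as $\lambda\ge 0$, it is steady when $\lambda=0$ and shrinking when $\lambda>0$.

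Next I would identify the domain. Here $B=y^2+b_0>0$ everywhere, while $A=a_0-x^2>0$ restricts $x$ to a bounded interval, and $q=yf(x/y)$ vanishes exactly where $f(x/y)=0$, i.e. along the two rays through the origin on which $x/y\in\{L_1,L_2\}$, with $f>0$ on the open sector $L_1<x/y<L_2$ between them. Thus the connected component $\Omega$ of $\{A>0\}\cap\{B>0\}\cap\{q\ne0\}$ carrying the construction is the sector cut out by these rays and the constraint $A>0$, which is the domain recorded in the statement. On the loci $\{A=0\}$ the $\bS^2$-factor closes up smoothly by \cite{verdiani-ziller}*{Theorem 2}, exactly as in the analysis of \eqref{metric:q=sqrtx2+y2}, so these contribute no boundary. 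Along the rays $\{q=0\}$, however, the local behavior of $f$ at its endpoints (vanishing while $f'\to\infty$, so that $\nabla q$ and the curvature blow up) places these points at finite distance and makes them a genuine metric boundary rather than an end; this is the content of Corollary~\ref{cor:completeness/Singularity} and is the source of the ``boundary at $q=0$'' in the $c\ne0$ case. This yields the conformally scalar flat solitons with boundary.

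For the conformally flat family I would apply Claim~\ref{claim:transform-A-B-c} with $c=1$, which preserves both the homogeneity of $q$ and the soliton equations while sending $(A,B)=(a_0-x^2,\,y^2+b_0)\mapsto(a_0,\,b_0)$. The resulting metric is $q^{-2}\big(a_0^{-1}dx^2+a_0\,ds^2+b_0^{-1}dy^2+b_0\,dt^2\big)$, and rescaling $(\tilde x,\tilde y,\tilde s,\tilde t)=(x/\sqrt{a_0},\,y/\sqrt{b_0},\,\sqrt{a_0}\,s,\,\sqrt{b_0}\,t)$ brings it to the standard form \eqref{eqn:ConformallyFlat}, with the bounding rays carried to $\tilde y=\tilde L_i\tilde x$ for $\tilde L_i=L_i\sqrt{a_0/b_0}$ as in the statement. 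Since $A,B$ are now constant there is no $\{A=0\}$ locus, so the domain is the full infinite cone between the two rays, and this conformally flat soliton is precisely the asymptotic model of the $c\ne0$ case, completing the statement.

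The main obstacle is not in this bookkeeping but in the assumed Proposition~\ref{prop:SexBuffet}: producing a positive $f$ on a finite interval with prescribed vanishing at both ends. I would obtain it by a shooting argument. The ODE \eqref{eqn:resulting-ODE} is invariant under $z\mapsto -z$, so I would look for even solutions, imposing $f(0)>0$, $f'(0)=0$, and treating the concavity $f''(0)$ as the shooting parameter (normalizing $f(0)=1$, and when $\lambda>0$ using the metric and coordinate rescalings to fix the remaining freedom). A local analysis at a prospective zero $L$ shows that the admissible behavior is $f\sim c\,(L-z)^{p}$ with $2p^2+2p-1=0$, i.e. $p=(\sqrt3-1)/2\in(0,1)$, giving $f\in C^0$ with $f'\to\infty$; one must then show that this singular endpoint behavior is actually reached by the interior solution, which is a connection problem. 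The delicate points are controlling the global flow of this degenerate third-order system as $f''(0)$ varies---ruling out blow-up of $f$ and vanishing of the coefficient $b_0zf-(a_0+b_0z^2)f'$ before $f$ reaches zero---so that an intermediate-value argument between the ``stays positive'' and ``reaches zero'' regimes yields the desired profile, with $\lambda\ge0$ being exactly the sign condition that drives $f$ into the latter regime, consistent with the incompleteness forced by \cite{catino-mantegazza-mazzieri}.
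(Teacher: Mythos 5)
Your proof of the corollary itself is correct and follows essentially the same route as the paper: the soliton property comes from the Section~\ref{subsect:solitons-conformal-S2H2} reduction (homogeneous $q=yf(x/y)$ with $f$ solving \eqref{eqn:resulting-ODE}, via Lemma~\ref{lemma:Cx=Cy=0} and Corollary~\ref{corollary:homogeneous-SxSy-equivalent}), the domain is the sector cut out by $\{A>0\}$ and the two rays $\{q=0\}$ with the $\{A=0\}$ loci closing up smoothly by \cite{verdiani-ziller}, the boundary/curvature-blowup claim is exactly the content of Corollary~\ref{cor:completeness/Singularity}, and the conformally flat family is obtained, as you say, by applying Claim~\ref{claim:transform-A-B-c} with $c=1$ and rescaling coordinates. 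The one place you genuinely depart from the paper is your supplementary sketch of Proposition~\ref{prop:SexBuffet}: the paper does not use a shooting or connection argument, but instead fixes the initial data $f(0)>0$, $f'(0)=0$, $f''(0)<0$ once and for all and shows the solution remains strictly concave, by assuming a first inflection point $\zeta$ and deriving a sign contradiction in $f^{(3)}(\zeta)$ from \eqref{eqn:f'''} using $\lambda\geq 0$ and $\on{sgn} f'=-\on{sgn} z$; the same sign observation shows the coefficient $b_0 z f-(a_0+b_0z^2)f'$ cannot vanish while $f>0$, so $f^{(3)}$ never blows up and $f$ extends until it hits zero on both sides. Your shooting sketch leaves precisely these global-control issues open (you flag them yourself as the "delicate points"), so as a replacement proof of the proposition it would be incomplete; however, since the corollary explicitly takes $f$ from Proposition~\ref{prop:SexBuffet} as given, this does not create a gap in your proof of the corollary. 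As a side remark, your local computation of the vanishing exponent, $f\sim c(L-z)^p$ with $2p^2+2p-1=0$, i.e. $p=(\sqrt3-1)/2\in(0,1)$, is correct and consistent with the paper's numerics (Figure~\ref{fig:numericalSB}) and with the rapid-decay argument in Corollary~\ref{cor:completeness/Singularity}, though the paper never needs the precise exponent.
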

\begin{proof}[Proof of Proposition~\ref{prop:SexBuffet}]
    We will show that the solution $f$ to ODE~\eqref{eqn:resulting-ODE} with initial condition $f(0) = C_1 > 0, f'(0) = 0$, and $f''(0) = -C_2 < 0$ is strictly concave ($f'' < 0$) on the maximal domain of definition.
    Since $f'(0) = 0$, $f$ attains an interior maximum, so by concavity, it has two zeroes $L_1, L_2$ with $L_1 < 0 < L_2$.

    Assume that $f$ is not strictly concave, so there exists some point $\zeta$ of minimal absolute value $|\zeta| \neq 0$ where $f''(\zeta) = 0$. 
    By minimality, $f''(z) < 0$ for all $|z| < |\zeta|$.
    In this region, we know that $f$ is strictly positive; otherwise $f$ must have crossed zero and we would be done. 
    Since $f'' < 0$, $f'$ is decreasing, so $f'(z) < 0$ for $z > 0$ and $f'(z) > 0$ for $-| \zeta| < z < 0$.
    Notably, in the region  $|z| \leq |\zeta|$, $z$ and $f'$ have opposite, non-zero signs:
    \begin{equation}\label{eqn:signOff'=sgnz}
         \on{sgn} f'(z) = - \on{sgn} z, \qquad |z| < |\zeta|.
    \end{equation}  
    
    We first claim that $f^{(3)}(z)$ exists for $|z| < |\zeta|$, so $f$ exists up to $\zeta$ and is smooth.
    $f^{(3)}$ is given by
    \begin{equation}\label{eqn:f'''}
    \begin{split}
            f^{(3)}(z) = &\frac{1}{(a_0 + b_0 z^2)(b_0z f - (a_0 + b_0z^2)f')f^2}\bigg[ - 3 \left( a_0 (f')^2 + b_0(f-zf')^2 \right)^2 - \lambda \left( a_0 (f')^2 + b_0(f-zf')^2 \right) \\
    & + (4a_0 -b_0 z^2) b_0 f^3 f'' + \left( a_0 + b_0 z^2 \right)^2ff''((f')^2- ff'') + \lambda \left( a_0 + b_0 z^2 \right) f f'' \bigg] -\frac{b_0z}{a_0 + b_0z^2}f'',
    \end{split}
    \end{equation}
    so while $f, f', f''$ exist, $f^{(3)}$ blows up only if $b_0zf = (a_0 + b_0z^2)f'$, since the other terms in the denominator are positive.
    This cannot occur for $a_0,b_0 > 0$ because $f > 0$ and $\on{sgn} z = - \on{sgn} f'$ by the previous observation. 

    At $\zeta$, equation~\eqref{eqn:f'''} reduces to 
    \begin{equation}
         f^{(3)}(\zeta) = \frac{1}{(a_0 + b_0 \zeta^2)(b_0\zeta f - (a_0 + b_0\zeta^2)f')f^2}\bigg[ - 3 \left( a_0 (f')^2 + b_0(f-\zeta f')^2 \right)^2 - \lambda \left( a_0 (f')^2 + b_0(f-\zeta f')^2 \right)\bigg]
    \end{equation}
    and since $\lambda \geq 0$, then the bracketed term is strictly negative. 
    Therefore,
    \begin{equation}\label{eqn:signs}
    \on{sgn} f^{(3)}(\zeta) = \on{sgn} \left(  (a_0 + b_0\zeta^2)f'(\zeta) -b_0 \zeta f(\zeta)  \right).
    \end{equation}
    If $\zeta > 0$, then $f''(z) <0$ for $z < \zeta$, so $f^{(3)}(\zeta) \geq 0$.
    Conversely, if $\zeta < 0$, then $f''(z) <0$ for $z > \zeta$, so $f^{(3)}(\zeta) \leq 0$. 
    From equation~\eqref{eqn:signs}, we can rewrite the above two inequalities as 
    \[
    \on{sgn} \left( (a_0 + b_0 \zeta^2)f' - b_0 \zeta f \right) = \on{sgn} \zeta.
    \] 
    This sign equation violates the sign inequality from equation~\eqref{eqn:signOff'=sgnz}, contradicting the vanishing of $f''$.
    Therefore, $f$ is strictly concave.
    
    It remains to be shown that $f$ extends from $(-\ve,\ve)$ to $[L_1, L_2]$, for some $L_1 < 0 < L_2$ with $f(L_i) = 0$.
    This can only fail if one of the derivatives of $f$ blows up before $f$ vanishes.
    We show that $f^{(3)}$ does not blow up, whereby $f',f''$ do not either.
    This comes from equation~\eqref{eqn:f'''} for $f^{(3)}$: while $f \neq 0$ is still finite, this would mean that $b_0 z f = (a_0 + b_0 z^2)f'$, which cannot occur while $f > 0$.
    Therefore, $f^{(3)}$ exists while $f$ is positive, so $f$ continues to exist until it reaches zero on both sides of the origin, at points $L_1 < 0 < L_2$. 
\end{proof}
\begin{figure}
    \centering
    \includegraphics[scale = 0.25]{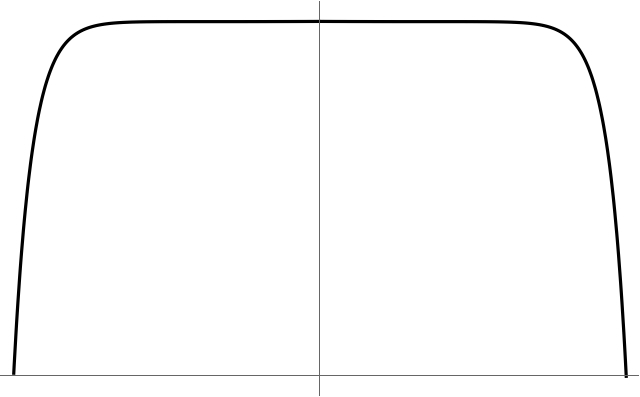}
    \caption{A numerically approximated solution $f$ to ODE (\hyperref[eqn:resulting-ODE]{$\star_{1,1,3}$}) with initial data $f'(0) = 0$ and $f''(0) = -1$ as given in Proposition~\ref{prop:SexBuffet}, showing the behavior of such solutions. 
    At the boundary points where $f = 0$, the derivatives of $f$ blow up.}
    \label{fig:numericalSB}
\end{figure}

\subsection{Completeness, boundary, singularities, and asymptotic geometry}\label{subsec:singularityGeometry}
The soliton metrics~\eqref{metric:SStar} with conformally cylindrical base, where $q = y f(x/y)$ for $f$ a solution of equation~\eqref{eqn:resulting-ODE}, are a priori only given on the chart described by the coordinates $(x,y,s,t)$ where~\eqref{eqn:CPSFMetric} is well-defined.
Such a metric may degenerate, determined by where $q,A,B$ vanish or blow up.
The points where $q$ vanishes can be at either finite or infinite distance away, representing incompleteness or a geometric end, respectively.
This dichotomy depends on the rate of vanishing of $q$ at such points.
By homogeneity, $q$ must vanish at the origin, so such a soliton cannot be compact. 
On the other hand, if $q$ blows up approaching a certain point, then there exist points at the boundary of the domain of definition $\Omega$ within finite distance, so the manifold is incomplete.

The domain of definition $\Omega$ of the metric $g$ in the $(x,y)$ plane is a maximal connected region where $A,B$ are positive and $q$ is non-zero: it is a connected component of $\{A(x) > 0\} \cap \{B(y) > 0\} \cap \{q(x,y) \neq 0\}$.
To understand the geometry of the soliton, we analyze the metric degeneration occurring along $\p \Omega$, including $(x,y) \to \infty$. 
The homogeneity of $q$ makes the base metric asymptotically cylindrical when $A(0) B(0) \neq 0$. 
The maximal domain $\Omega$ will be the conical region in the $(x,y)$ plane bounded by the rays where $q$ vanishes. 
Figure~\ref{fig:solitonsDomainGeometryEx} shows the possible configurations for $q$ and the resulting geometry of the constructed solitons:
\begin{figure}
    \centering
    \begin{tabular}[b]{c}
        \begin{tikzpicture}[scale = 0.55]
        \begin{axis}[clip = false,
        hide x axis, hide y axis,
        axis equal
        ]
        \addplot[color = black, samples = 10, domain = -5:5,dotted]{ 0};
        \addplot[color = gray,samples=100,domain = -3:3]{sqrt(9-(x)^2)};
        \addplot[color = gray,samples=100,domain = -3:3]{-sqrt(9-(x)^2)};
    
        \draw (0,0) node {$\bullet$};
        \addplot[color = black, dashed]coordinates
        {(-3,-4) (-3,6)};
        \addplot[color = black, dashed]coordinates
        {(3,-4) (3,6)};
        \addplot[color = black, dotted]coordinates
        {(0,-4) (0,6)};

        \addplot[name path = upper, samples = 10, domain = -3:3, color = white]{6    };
        \addplot[name path = lower, samples = 10, domain = -3:3, color = white]{-4    };
    
        \addplot [
                thick,
                color=black,
                fill=gray, 
                fill opacity=0.25
            ]
            fill between[
                of=lower and upper,
                soft clip={domain=-3:3},
            ];
        
        \end{axis}
    \end{tikzpicture} \\
        \small (a)
    \end{tabular}
    \begin{tabular}[b]{c}
        \begin{tikzpicture}[scale = 0.55]
        \begin{axis}[clip = false,
        hide x axis, hide y axis,
        axis equal
        ]

        \addplot[name path = upper, samples = 10, domain = -3:3, color = white]{6    };
        \addplot[name path = lowerL, samples = 10, domain = -3:0, color = white]{-x    };
        \addplot[name path = lowerR, samples = 10, domain = -3:3, color = white]{x    };
        \addplot [
                thick,
                color=black,
                fill=gray, 
                fill opacity=0.25
            ]
            fill between[
                of=lowerL and upper,
                soft clip={domain=-3:0},
            ];
            \addplot [
                thick,
                color=black,
                fill=gray, 
                fill opacity=0.25
            ]
            fill between[
                of=lowerR and upper,
                soft clip={domain=0:3},
            ];
            \addplot[color = black, samples = 10, domain = -5:5,dotted]{ 0};
        \addplot[color = gray,samples=100,domain = -3:3]{sqrt(9-(x)^2)};
        \addplot[color = gray,samples=100,domain = -3:3]{-sqrt(9-(x)^2)};
    
        \addplot[color = black, dashed]coordinates
        {(-3,-4) (-3,6)};
        \addplot[color = black, dashed]coordinates
        {(3,-4) (3,6)};
        \addplot[color = black,dotted]coordinates
        {(0,-4) (0,6)};
    
        \addplot[samples = 10, domain = -4:5, color = black]{x};
        \addplot[samples = 10, domain = -5:4, color = black]{-x};
        \end{axis}
    \end{tikzpicture} \\
        \small (b)
    \end{tabular}
    \begin{tabular}[b]{c}
        \begin{tikzpicture}[scale = 0.55]
        \begin{axis}[clip = false,
        hide x axis, hide y axis,
        axis equal
        ]

        \addplot[name path = upper, samples = 10, domain = 0:3, color = white]{x+x    };
        \addplot[name path = lower, samples = 10, domain = 0:3, color = white]{x    };
    
        \addplot [
                thick,
                color=black,
                fill=gray, 
                fill opacity=0.25
            ]
            fill between[
                of=lower and upper,
                soft clip={domain=0:3},
            ];
    
        \addplot[color = black, samples = 10, domain = -5:5,dotted]{ 0};
        \addplot[color = gray,samples=100,domain = -3:3]{sqrt(9-(x)^2)};
        \addplot[color = gray,samples=100,domain = -3:3]{-sqrt(9-(x)^2)};
    
        \addplot[color = black, dashed]coordinates
        {(-3,-4) (-3,6)};
        \addplot[color = black, dashed]coordinates
        {(3,-4) (3,6)};
        \addplot[color = black,dotted]coordinates
        {(0,-4) (0,6)};
    
        \addplot[samples = 10, domain = -4:5, color = black]{x};
        \addplot[samples = 10, domain = -2:3, color = black]{x+x};
        \end{axis}
    \end{tikzpicture} \\
        \small (c)
    \end{tabular}
    \begin{tabular}[b]{c}
        \begin{tikzpicture}[scale = 0.55]
        \begin{axis}[clip = false,
        hide x axis, hide y axis,
        axis equal
        ]

        \addplot[name path = upper, samples = 10, domain = -3:3, color = white]{6    };

        \addplot[name path = lower, samples = 10, domain = -3:3, color = black]{x    };

            \addplot [
                thick,
                color=black,
                fill=gray, 
                fill opacity=0.25
            ]
            fill between[
                of=lower and upper,
                soft clip={domain=-3:3},
            ];
            \addplot[color = black, samples = 10, domain = -5:5,dotted]{ 0};
        \addplot[color = gray,samples=100,domain = -3:3]{sqrt(9-(x)^2)};
        \addplot[color = gray,samples=100,domain = -3:3]{-sqrt(9-(x)^2)};
    
        \addplot[color = black, dashed]coordinates
        {(-3,-4) (-3,6)};
        \addplot[color = black, dashed]coordinates
        {(3,-4) (3,6)};
        \addplot[color = black,dotted]coordinates
        {(0,-4) (0,6)};
    
        \addplot[samples = 10, domain = -4:5, color = black]{x};
        \end{axis}
    \end{tikzpicture} \\
        \small (d)
    \end{tabular}
    \caption{The shaded regions above represent $\Omega$, the possible domains of definition for the~\eqref{metric:SStar} metrics with $b_0 > 0$.
    The conformally flat domains ($c = 0$) would be the same, without the constraint $|x| < \sqrt{a_0 / c}$ coming from $A > 0$.
    The origin bullet represents an asymptotically cylindrical end of the base metric.
    The solid lines denote $\{q = 0\}$, which are either ends or boundary points if the derivatives of $q$ are finite or infinite, respectively.
    If $b_0 < 0$, then the regions would be intersected with the half plane where $y > \sqrt{b_0}$. 
    }
    \label{fig:solitonsDomainGeometryEx}
\end{figure}
\begin{enumerate}[(a)]
\item $f \neq 0$ satisfying the closing condition $\lim_{z \to \pm \infty} \frac{f(z)}{z} \neq 0$; the origin is the only end of the manifold. 
    This represents the domain for the asymptotically cylindrical expanding soliton in example~\eqref{metric:q=sqrtx2+y2}.
    
    \item $f$ has two zeroes, so $q$ vanishes on the rays emanating from the origin, for example the (\hyperref[cor:SexBuffet]{\metricSexBuffet}) metrics for $c = 0$ (conformally flat) as well as $c \neq 0$ (conformally scalar flat).
    The boundary behavior where $q = 0$ is illustrated in Figure~\ref{fig:geometricBoundary}.

    \item same as (b), but $f(z)$ has two positive zeroes, so $y$ remains bounded.
    Solutions have been observed numerically, with the same boundary behavior as (\hyperref[cor:SexBuffet]{\metricSexBuffet}) metrics.
    The domain can also cross the $x$-axis, exemplified by the metric~\eqref{metric:q=sqrtx2+y2} for $b_0 < 0$, where the domain is bounded between the rays $y = \pm \sqrt{|b_0| / a_0} x$. 
   \item this case occurs for (\hyperref[cor:SexBuffet]{\metricSexBuffet}) metrics obtained as perturbations of~\eqref{metric:pbnski}.
   It can also occur for a solution $f$ to~\eqref{eqn:resulting-ODE} with a single zero and linear growth. 
   A critical case occurs when $f(0) = 0$ and the boundary becomes the $y$-axis, so the boundary line is vertical. 
   Another critical case is when $f \neq 0$ has sub-linear growth, so $q$ vanishes along the $x$-axis, creating an end of the manifold where the boundary line is horizontal.
   The latter case includes the Schwarzschild metric.

\end{enumerate}

To understand the metric near the boundary of $\Omega$, we need to consider the behavior of the metric $g$ along paths in the $(x,y)$ plane approaching $\p\Omega$. 
The study of such paths along the base $\Omega$, meaning fixing $s$ and $t$ to be constant, is sufficient because the base is totally geodesic since $g$ is toric.

When approaching the origin along a ray where $q \neq 0$, we may approximate $A,B$ by their values at $0$, so the base metric is approximated by the conformally flat one $\mr{(}$\hyperref[cor:SexBuffet]{$\mr{\metricSexBuffet}$}$\mr{)}$ with $c = 0$.
For homogeneous $q$, this metric is cylindrical.
The origin therefore represents an asymptotically cylindrical end in the base metric.
If $A$ and $B$ are constant, then the base metric is the genuine cylinder metric $\bR \times \bS^1$. 
For the $(s,t)$ directions of the torus fibration, the size of each torus corresponds to $1/q$, which grows linearly along the ray to the origin. 
The end at the origin is therefore asymptotic to the cone over a torus.
The radial direction in the cone corresponds to the cylindrical end $(x,y) \to (0,0)$ in the base metric.

We now examine the path as $y \to \infty$.
Any solitons in this category satisfying $f(0) \neq 0$ have domains where $y$ is unbounded, including the soliton metrics $\mr{(}$\hyperref[cor:SexBuffet]{$\mr{\metricSexBuffet}$}$\mr{)}$ of Corollary~\ref{cor:SexBuffet}. 
\begin{claim}\label{claim:length-of-paths}
The path along $y \to \infty$ has finite length when $A,B$ are quadratics, as in 
~\eqref{metric:q=sqrtx2+y2} and $\mr{(}$\hyperref[cor:SexBuffet]{$\mr{\metricSexBuffet}$}$\mr{)}$ with $c \neq 0$, and infinite length in the conformally flat case of~\eqref{metric:q=sqrtx2+y2} and $\mr{(}$\hyperref[cor:SexBuffet]{$\mr{\metricSexBuffet}$}$\mr{)}$ when $c = 0$. 
\end{claim}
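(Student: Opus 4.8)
The plan is to read off the arclength of a \emph{vertical} path — one with $x,s,t$ held fixed and $y$ ranging over $[y_0,\infty)$ — directly from the metric, and then to distinguish the two regimes by a power count of the integrand as $y\to\infty$. Along such a path the only relevant coefficient is $g_{yy}$, so the length is
\[
\int_{y_0}^{\infty} \sqrt{g_{yy}}\,dy = \int_{y_0}^{\infty} \frac{dy}{|y|\,|f(x/y)|\,\sqrt{B(y)}},
\]
since for the quadratic metrics $(\metricQIsSqrtYSqXSq)$ and $(\metricSexBuffet)$ one has $q=yf(x/y)$ and $B(y)=y^2+b_0$, while in the conformally flat reduction ($c=0$) of $(\metricSexBuffet)$ the warping coefficient $B$ is a positive constant, so $\sqrt{B(y)}$ is replaced by a constant. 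The entire claim is therefore a comparison of two elementary improper integrals, with $\sqrt{B}$ providing the distinguishing power of $y$.

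The key observation is that as $y\to\infty$ with $x$ fixed, the argument $z=x/y\to 0$, so $f(x/y)\to f(0)$. In both families $f(0)>0$: for $(\metricQIsSqrtYSqXSq)$ the explicit solution gives $f(0)=\sqrt{a_0}$, and for $(\metricSexBuffet)$ the solution from Proposition \ref{prop:SexBuffet} has $f(0)=C_1>0$. Since $f$ is continuous, I would fix $y_1$ and constants $0<m\le M$ with $m\le f(x/y)\le M$ for all $y\ge y_1$, and use these two-sided bounds to sandwich the integrand between constant multiples of its leading asymptotic. This reduces convergence or divergence of the length to that of an explicit integral in $y$ alone.

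In the quadratic case, $\sqrt{B(y)}=\sqrt{y^2+b_0}\sim y$, so the integrand is comparable to $y^{-2}$, and $\int^{\infty} y^{-2}\,dy$ converges, yielding finite length; this recovers the earlier bound $\lesssim \int_1^{\infty} y^{-2}\,dy<+\infty$ recorded for $(\metricQIsSqrtYSqXSq)$. In the conformally flat case $c=0$, the factor $\sqrt{B}$ is constant, so the integrand is comparable to $y^{-1}$, and $\int^{\infty} y^{-1}\,dy$ diverges, yielding infinite length. The dichotomy is thus produced entirely by the extra power of $y$ contributed by the quadratic $B$.

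I do not anticipate any deep obstacle. The only point demanding care is the uniform control of $f(x/y)$ away from zero as $y\to\infty$, which is precisely where the hypothesis $f(0)>0$ is used and which guarantees that the conformal factor does not degenerate in a way that could overturn the power count. Once that bound is secured, the statement follows from the comparison of $\int^{\infty}y^{-2}\,dy$ and $\int^{\infty}y^{-1}\,dy$.
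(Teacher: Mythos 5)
Your proof is correct and follows essentially the same route as the paper's: both reduce the claim to the length integral $\int \sqrt{g_{yy}}\,dy$ of a vertical path, use $q = y f(x/y)$ with $f(x/y)\to f(0)>0$ to control the conformal factor, and conclude by comparing $\int^{\infty} y^{-2}\,dy$ (quadratic $B$, finite) with $\int^{\infty} y^{-1}\,dy$ (constant $B$, infinite — what the paper phrases as comparability to the hyperbolic metric in $(x_0,y)$). The paper merely adds geometric commentary beyond the stated claim (the warped-product/$\bS^2$ closing-up picture in the quadratic case and the observation that the maximizing vertical line is a geodesic), none of which your power count needs.
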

\begin{proof}
We consider a sequence of points $(x_i,y_i)$ where $y_i \to \infty$ and write $q(x_i, y_i) = y_i q(x_i/y_i, 1)$.
For $A = a_0 - x^2$, $|x| \leq \sqrt{a_0}$, so $q(x_i/y_i, 1) \to L := q(0,1)$ and we can approximate the metric by $g \approx \frac{1}{y^2L^2} \left({d\tilde{x}^2} +  d\tilde{s}^2 + {d\tilde{y}^2} + d\tilde{t}^2\right)$.
The conformal factor is independent of $(x,s)$, so letting $w :=\log y$ exhibits this as a warped product metric with $\bS^2$, $g \approx  dt^2 + dw^2 + \frac{w^2}{L^2}g_{\bS^2}$.
Therefore, as $y \to \infty$, the new coordinate $t \to 0$ evidently has finite length.

When the pair $(A,B) = (a_0 - cx^2, b_0 + cy^2)$ become constants $(a_0, b_0)$ upon taking $c \downarrow 0$, the metric is of the form $g = \frac{1}{y^2 f(x/y)^2} (dx^2 + dy^2 + ds^2 + dt^2)$.
For this metric, curves of the form $(x_0,y)$ as $y \to \infty$ have infinite length. 
At a point $x_0$ where $q(x_0,1)$ is maximized, the path $\gamma(\tau) = (x_0,e^\tau,s_0,t_0)$ is a geodesic of infinite length, as this metric is comparable to the hyperbolic metric in $(x_0,y)$.
\end{proof}
This result shows that the conformally flat $\eqref{metric:SStar}$ metrics have another end where $(x,y) \to \infty$ where the tori shrink, so the total metric on this end is asymptotic to the base metric, which is genuinely cylindrical. 
For example, the metric~\eqref{metric:q=sqrtx2+y2} with $c = 0$ is a complete expanding Ricci soliton interpolating between two ends, one asymptotic to the cylinder $\bR \times \bS^1$, and the other asymptotic to $C(\bT^2)$, the cone over the two-torus. 
\begin{figure}
    \centering
    \begin{tabular}[b]{c}
        \begin{tikzpicture}[scale = 0.42]
        \begin{axis}[clip = false,
        hide x axis, hide y axis,
        axis equal
        ]

        \addplot[name path = upper, samples = 10, domain = -3:3, color = white]{6    };
        \addplot[name path = lowerL, samples = 10, domain = -3:0, color = white]{-x    };
        \addplot[name path = lowerR, samples = 10, domain = -3:3, color = white]{x    };
        \addplot [
                thick,
                color=black,
                fill=gray, 
                fill opacity=0.25
            ]
            fill between[
                of=lowerL and upper,
                soft clip={domain=-3:0},
            ];
            \addplot [
                thick,
                color=black,
                fill=gray, 
                fill opacity=0.25
            ]
            fill between[
                of=lowerR and upper,
                soft clip={domain=0:3},
            ];
        \addplot[color = black, samples = 10, domain = -5:5,dotted]{ 0};
        \addplot[color = gray,samples=100,domain = -3:3]{sqrt(9-(x)^2)};
        \addplot[color = gray,samples=100,domain = -3:3]{-sqrt(9-(x)^2)};
    
        \addplot[color = black, dashed]coordinates
        {(-3,-3) (-3,6)};
        \addplot[color = black, dashed]coordinates
        {(3,-3) (3,6)};
        \addplot[color = black,dotted]coordinates
        {(0,-3) (0,6)};

        \addplot[samples = 10, domain = -3:5, color = black]{x};
        \addplot[samples = 10, domain = -5:3, color = black]{-x};

        \addplot[color = blue, ->]coordinates{(1,2) (1,6)};
        
        \end{axis}
    \end{tikzpicture} \\
        \small (a)
    \end{tabular}
    \begin{tabular}[b]{c}
           \begin{tikzpicture}[scale = 0.42]
        \begin{axis}[clip = false,
        hide x axis, hide y axis,
        axis equal
        ]

        \addplot[name path = upper, samples = 10, domain = -5:5, color = white]{6    };
        \addplot[name path = lowerL, samples = 10, domain = -5:0, color = white]{-x    };
        \addplot[name path = lowerR, samples = 10, domain = 0:5, color = white]{x    };
        \addplot [
                thick,
                color=black,
                fill=gray, 
                fill opacity=0.25
            ]
            fill between[
                of=lowerL and upper,
                soft clip={domain=-5:0},
            ];
            \addplot [
                thick,
                color=black,
                fill=gray, 
                fill opacity=0.25
            ]
            fill between[
                of=lowerR and upper,
                soft clip={domain=0:5},
            ];
            \addplot[color = black, samples = 10, domain = -5:5,dotted]{ 0};
        \addplot[color = gray,samples=100,domain = -3:3]{sqrt(9-(x)^2)};
        \addplot[color = gray,samples=100,domain = -3:3]{-sqrt(9-(x)^2)};
    
        
        \addplot[color = black,dotted]coordinates
        {(0,-3) (0,6)};

        \addplot[samples = 10, domain = -3:5, color = black]{x};
        \addplot[samples = 10, domain = -5:3, color = black]{-x};

        \addplot[color = blue, ->]coordinates{(1,2) (1,6)};
        
        \end{axis}
    \end{tikzpicture}\\
         \small (b) 
    \end{tabular}
        \begin{tabular}[b]{c}
    \begin{tikzpicture}[scale = 0.42]
        \begin{axis}[clip = false,
        hide x axis, hide y axis,
        axis equal
        ]

        \addplot[name path = upper, samples = 10, domain = -3:3, color = white]{6    };
        \addplot[name path = lowerL, samples = 10, domain = -3:0, color = white]{-x    };
        \addplot[name path = lowerR, samples = 10, domain = -3:3, color = white]{x    };
        \addplot [
                thick,
                color=black,
                fill=gray, 
                fill opacity=0.25
            ]
            fill between[
                of=lowerL and upper,
                soft clip={domain=-3:0},
            ];
            \addplot [
                thick,
                color=black,
                fill=gray, 
                fill opacity=0.25
            ]
            fill between[
                of=lowerR and upper,
                soft clip={domain=0:3},
            ];
            \addplot[color = black, samples = 10, domain = -5:5,dotted]{ 0};
        \addplot[color = gray,samples=100,domain = -3:3]{sqrt(9-(x)^2)};
        \addplot[color = gray,samples=100,domain = -3:3]{-sqrt(9-(x)^2)};
    
        \addplot[color = black, dashed]coordinates
        {(-3,-3) (-3,6)};
        \addplot[color = black, dashed]coordinates
        {(3,-3) (3,6)};
        \addplot[color = black,dotted]coordinates
        {(0,-3) (0,6)};

        \addplot[samples = 10, domain = -3:5, color = black]{x};
        \addplot[samples = 10, domain = -5:3, color = black]{-x};

        \addplot[color = blue,->]coordinates
        {(1.3,4) (1.3,1.7)};
        \addplot[color = blue,samples=100, ->,domain = -0.5:1.3]{1.5};
        
        \end{axis}
    \end{tikzpicture}\\
    \small (c)
    \end{tabular}
    \begin{tabular}[b]{c}
    \begin{tikzpicture}[scale = 0.42]
        \begin{axis}[clip = false,
        hide x axis, hide y axis,
        axis equal
        ]

        \addplot[name path = upper, samples = 10, domain = -3:3, color = white]{6    };
        \addplot[name path = lowerL, samples = 10, domain = -3:0, color = white]{-x    };
        \addplot[name path = lowerR, samples = 10, domain = -3:3, color = white]{x    };
        \addplot [
                thick,
                color=black,
                fill=gray, 
                fill opacity=0.25
            ]
            fill between[
                of=lowerL and upper,
                soft clip={domain=-3:0},
            ];
            \addplot [
                thick,
                color=black,
                fill=gray, 
                fill opacity=0.25
            ]
            fill between[
                of=lowerR and upper,
                soft clip={domain=0:3},
            ];
            \addplot[color = black, samples = 10, domain = -5:5,dotted]{ 0};
        \addplot[color = gray,samples=100,domain = -3:3]{sqrt(9-(x)^2)};
        \addplot[color = gray,samples=100,domain = -3:3]{-sqrt(9-(x)^2)};
    
        \addplot[color = black, dashed]coordinates
        {(-3,-3) (-3,6)};
        \addplot[color = black, dashed]coordinates
        {(3,-3) (3,6)};
        \addplot[color = black,dotted]coordinates
        {(0,-3) (0,6)};

        \addplot[samples = 10, domain = -3:5, color = black]{x};
        \addplot[samples = 10, domain = -5:3, color = black]{-x};

        \addplot[color = blue,samples=100, ->,domain = 1:3]{4};
        
        \end{axis}
    \end{tikzpicture}\\
    \small (d)
    \end{tabular}
    \begin{tabular}[b]{c}
    \begin{tikzpicture}[scale = 0.42]
        \begin{axis}[clip = false,
        hide x axis, hide y axis,
        axis equal
        ]

        \addplot[name path = upper, samples = 10, domain = -3:3, color = white]{6    };
        \addplot[name path = lowerL, samples = 10, domain = -3:0, color = white]{-x    };
        \addplot[name path = lowerR, samples = 10, domain = -3:3, color = white]{x    };
        \addplot [
                thick,
                color=black,
                fill=gray, 
                fill opacity=0.25
            ]
            fill between[
                of=lowerL and upper,
                soft clip={domain=-3:0},
            ];
            \addplot [
                thick,
                color=black,
                fill=gray, 
                fill opacity=0.25
            ]
            fill between[
                of=lowerR and upper,
                soft clip={domain=0:3},
            ];
            \addplot[color = black, samples = 10, domain = -5:5,dotted]{ 0};
        \addplot[color = gray,samples=100,domain = -3:3]{sqrt(9-(x)^2)};
        \addplot[color = gray,samples=100,domain = -3:3]{-sqrt(9-(x)^2)};
    
        \addplot[color = black, dashed]coordinates
        {(-3,-3) (-3,6)};
        \addplot[color = black, dashed]coordinates
        {(3,-3) (3,6)};
        \addplot[color = black,dotted]coordinates
        {(0,-3) (0,6)};

        \addplot[samples = 10, domain = -3:5, color = black]{x};
        \addplot[samples = 10, domain = -5:3, color = black]{-x};

        \addplot[color = blue,samples=100, ->,domain = 1:2.8]{3};
        
        \end{axis}
    \end{tikzpicture}\\
    \small (e)
    \end{tabular}
    \begin{tabular}[b]{c}
    \begin{tikzpicture}[scale = 0.42]
        \begin{axis}[clip = false,
        hide x axis, hide y axis,
        axis equal
        ]

        \addplot[name path = upper, samples = 10, domain = -3:3, color = white]{6    };
        \addplot[name path = lowerL, samples = 10, domain = -3:0, color = white]{-x    };
        \addplot[name path = lowerR, samples = 10, domain = -3:3, color = white]{x    };
        \addplot[name path = bottom, samples = 10, domain = -2:2, color = white]{2    };
        \addplot[samples = 10, domain = -4:4, color = black]{2    };
        \addplot [
                thick,
                color=black,
                fill=gray, 
                fill opacity=0.25
            ]
            fill between[
                of=lowerL and upper,
                soft clip={domain=-3:-2},
            ];
            \addplot [
                thick,
                color=black,
                fill=gray, 
                fill opacity=0.25
            ]
            fill between[
                of=lowerR and upper,
                soft clip={domain=2:3},
            ];
            \addplot [
                thick,
                color=black,
                fill=gray, 
                fill opacity=0.25
            ]
            fill between[
                of=bottom and upper,
                soft clip={domain=-2:2},
            ];
            \addplot[color = black, samples = 10, domain = -5:5,dotted]{ 0};
        \addplot[color = gray,samples=100,domain = -3:3]{sqrt(9-(x)^2)};
        \addplot[color = gray,samples=100,domain = -3:3]{-sqrt(9-(x)^2)};
    
        \addplot[color = black, dashed]coordinates
        {(-3,-3) (-3,6)};
        \addplot[color = black, dashed]coordinates
        {(3,-3) (3,6)};
        \addplot[color = black,dotted]coordinates
        {(0,-3) (0,6)};

        \addplot[samples = 10, domain = -3:5, color = black]{x};
        \addplot[samples = 10, domain = -5:3, color = black]{-x};

        \addplot[color = blue,->]coordinates
        {(0,5) (0,2.1)};
        
        \end{axis}
    \end{tikzpicture}\\
    \small (f)
    \end{tabular}
    \caption{Figure (a) shows a conformally scalar flat soliton, (e.g.~$\mr{(}$\hyperref[cor:SexBuffet]{$\mr{\metricSexBuffet}$}$\mr{)}$ with $c \neq 0$) where the path $y \to \infty$ has finite length.
    In the conformally flat case (b), (e.g.~$\mr{(}$\hyperref[cor:SexBuffet]{$\mr{\metricSexBuffet}$}$\mr{)}$ with $c = 0$), this length is infinite.
    Figure (c) shows paths of either finite or infinite length depending on the value of $f'$ near the boundary.
    For metrics where $f$ has bounded derivative where it vanishes, these lengths are infinite, so the boundary line is an end. 
    For metrics $\mr{(}$\hyperref[cor:SexBuffet]{$\mr{\metricSexBuffet}$}$\mr{)}$ when $c = 0$ or $c \neq 0$, these paths have finite length approaching the boundary where the curvature blows up.
    Figure (d) shows a path of finite length where the manifold closes up spherically, smoothly if $a_0 = 1$ or with a conical singularity otherwise.
    We can also approach a point where $A,q$ both vanish as shown in Figure (e), which can either be finite or infinite length.
    Figure (f) shows a path along with the metric closes up as in~\eqref{metric:q=sqrtx2+y2} solitons with $b_0 < 0$.
    The $(y,t)$ surface is hyperbolic as shown above in equation~\eqref{eqn:metric:x2-b0}, smoothly if and only if $b_0 = -1$, otherwise this is a conical singularity.}
    \label{fig:offToRadialy=infty}
\end{figure}

We can also characterize the case when vertical paths of finite length close up smoothly.
\begin{corollary}\label{cor:closingUpSmoothingAty=infty}
    For $g$ a metric of the form~\eqref{metric:SStar}, if $a_0 = 1$, then the metric in the region $y \to \pm \infty$ within $\Omega$ closes up smoothly.
\end{corollary}
\begin{proof}
    It suffices to consider $f = \sqrt{a_0 + b_0 z^2}$, as in the case of~\eqref{metric:q=sqrtx2+y2} metrics, since by rescaling, any metric $g$ with homogeneous $q$ is arbitrarily well approximated by this model metric in the region $\{x^2 \leq a_0\} \cap \{y^2 \geq L\}$ for $L$ sufficiently large.
    To study vertical paths of constant $x = x_0 \in (- \sqrt{a_0}, \sqrt{a_0})$ and $|y| \to + \infty$, let $\hat{g}$ be the restriction of $g$ to the surface slice of fixed $(x,s) = (x_0,s_0)$. 
    We reparametrize this using $\rho_{x_0}(y)$, defined as $\int \frac{dy}{\sqrt{(y^2 + b_0) (y^2 + b_0 x_0^2)}} = i b_0^{-1/2} F \left( i \sinh^{-1} \frac{y}{\sqrt{b_0} |x_0|} ; x_0 \right)$, where $F(x; k)$ denotes the incomplete elliptic integral of the first kind with modulus $k$, cf.~\cite{elliptic}*{\S 3.1}.
    It follows from \cite{elliptic}*{\S 3.10} that $\rho_{x_0}(y)$ depends smoothly on $(x_0,y)$, and $y \to \infty$ corresponds to $\rho \to K(x_0)$, the complete elliptic integral with modulus $x_0$.
    We can write $\hat{g} = \frac{1}{a_0} (d \rho^2 + f(\rho)^2 dt^2)$ with $f(\rho) = \sqrt{ (y^2 + b_0) / (y^2 + b_0 x_0^2)}$, so $f(\rho) \to 1$ in $C^k$ uniformly in $x_0$ as $y \to \infty$.
    If $a_0 = 1$, $\Sigma_1$ is the round unit sphere as described in equation~\eqref{eqn:metric:a0-x2}.
    Thus, as $y \to \infty$, $g$ converges to $(dr^2 + r^2 g_{\tilde{\bS}^2} + dt^2)$ in $C^k$, for $r = y^{-1}$, where $g_{\tilde{\bS}^2}$ is the spherical metric in $(x,s)$-coordinates.
    By \cite{verdiani-ziller}*{Theorem 2} and~\eqref{eqn:metric:a0-x2}, the limiting metric of $g$ is smooth since the cone angle near $r=0$ is $2 \pi$, provided that $a_0 = 1$.
    At $r=0$, the topology is given locally as $\bR^3 \times \bS^1$, so $g$ closes up smoothly.  
\end{proof}

We can also consider a path approaching a point $(x_0,y_0)$ with $q(x_0,y_0) = 0$ and $A(x_0), B(y_0) \neq 0$, as in Figure~\hyperref[fig:offToRadialy=infty]{\ref{fig:offToRadialy=infty}(c)}.
Depending on the decay of $q$, the length of this path may be infinite, so $(x_0,y_0)$ represents an end of the manifold, or finite, so the metric is incomplete with a boundary or singular. 
Let us approximate the base metric along this path as $\frac{1}{q(x,y)^2}\left(\frac{dx^2}{A(x_0)} + \frac{dy^2}{B(y_0)}\right)$.
We denote $r(x,y) = d((x,y),(x_0,y_0))$ and consider a path that ends at $(x_0,y_0)$; this has length uniformly equivalent to $\int_\gamma q^{-1}$ as $q \to 0$. 
The path integral converges if $q = O(r^{1+\ve})$ for some $\ve > 0$, and diverges for the metrics~\eqref{metric:pbnski} where $q$ is linear.
\begin{figure}
    \centering
    \begin{tabular}[b]{c}
    \includegraphics[scale=0.2]{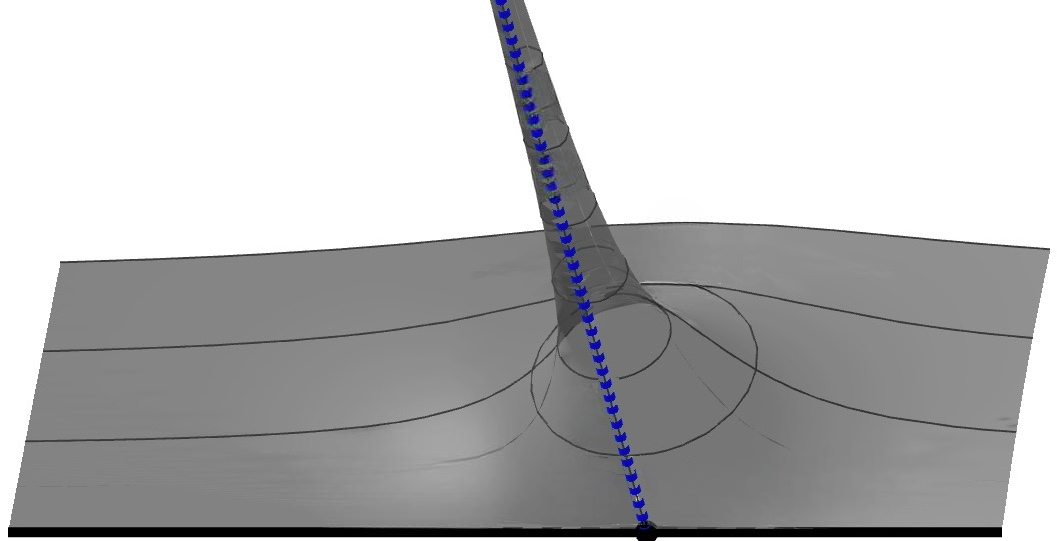}\\
    (a)
    \end{tabular}
    \begin{tabular}[b]{c}
        \includegraphics[scale = 0.08]{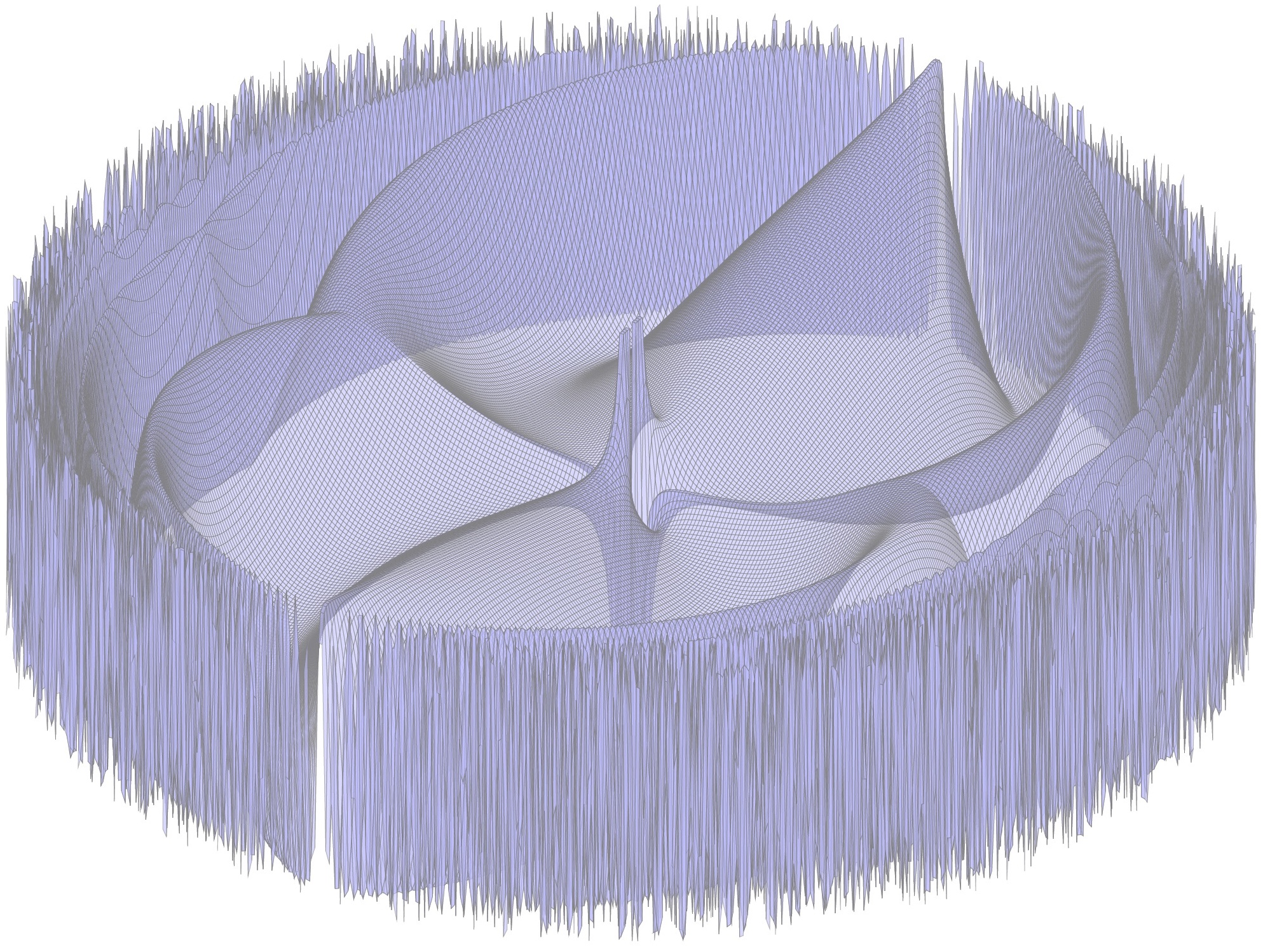}
        \\
    (b)
    \end{tabular}
    \caption{Two geometric realizations of the boundary for the $\mr{(}$\hyperref[cor:SexBuffet]{$\mr{\metricSexBuffet}$}$\mr{)}$ metrics, viewed (a) from the boundary and (b) from the interior.
    These visualizations exhibit the boundary of the cylinder as the limit of neighboring circles growing to infinity.
    In Figure (a), the neighboring circles expand by stereographic projection; the blue line represents a curve as in~\hyperref[fig:offToRadialy=infty]{\ref{fig:offToRadialy=infty}(c)} of finite length.
    In Figure (b), the concentric circles expand by increasingly condensed corrugation near the boundary to preserve the metric in the radial direction.
    This behavior is similar to the singularity in the Ooguri-Vafa metric where the circular fibers blow up in finite radius.
    Our metric singularities can therefore be seen as a cohomogeneity two analogue. 
    }
    \label{fig:geometricBoundary}
\end{figure}

\begin{proposition}\label{prop:newCompleteExpanders}
    The expanding solitons~\eqref{metric:q=sqrtx2+y2} with $a_0 = 1$ and $b_0 > 0$ are complete. 
    The origin $(x,y) = (0,0)$ represents and end asymptotic to the cone over a torus $C(\bT^2)$.
    If $(A,B) = (1 - x^2, y^2 + b_0)$, the origin is the only end. 
    If $(A,B) = (1,b_0)$, there is an additional end at $x^2 + b_0y^2 \to \infty$, asymptotic to a cylinder $\bR \times \bS^1$.
\end{proposition}
\begin{proof}
    Since $b_0 > 0$, $q$ has no zeroes except for the origin, so the domain $\Omega$ of the base metric is either $\bR^2\setminus \{(0,0)\}$ when $A,B$ are constants, or $\bR^2\setminus \{(0,0)\} \cap \{|x| \leq \sqrt{a_0} \}$ when $(A,B)$ are quadratics; the latter case is exhibited in Figure~\hyperref[fig:solitonsDomainGeometryEx]{\ref{fig:solitonsDomainGeometryEx}(a)}.

    First, suppose $(A,B) = (a_0, b_0)$ are constant, so the base metric is the cylinder over a level set $\{a_0 x^2 + b_0y^2 = 1\}$, as seen by the coordinate transformation $r^2 = a_0 x^2 + b_0 y^2$ and $\theta = \tan^{-1}(a_0y/b_0x)$. 
    The tori expand (resp.~contract) linearly along the two ends of the cylinder, as $q \to 0$ (resp. $q \to \infty$).
    As $q \to \infty$, the tori shrink, creating an overall cylindrical end $\bR \times \bS^1$. 
    At the other end, where $q \to 0$, the tori expand linearly, so metrically this end is $C(\bT^2)$, asymptotic to the cone over a torus. 

    In the case $(A,B) = (a_0-x^2, y^2 + b_0)$, we have $(A,B) \approx (a_0, b_0)$ near the origin, so this end is asymptotic to $C(\bT^2)$, as in the above case.
    Since $|x| \leq \sqrt{a_0}$, there are two behaviors to analyze: where $x \to \sqrt{a_0}$ with $y$ bounded, and $y \to \infty$. 
    First, at points where $A$ vanishes and $x^2 = a_0$ for $a_0=1$, the metric closes up spherically to a smooth point by \cite{verdiani-ziller}*{Theorem 2} and \eqref{eqn:metric:a0-x2}.
    The length of a path with $(x,s,t)$ fixed and $y \to \infty$, as illustrated in Figure~\hyperref[fig:offToRadialy=infty]{\ref{fig:offToRadialy=infty}(a)}, is $\lesssim \int_1^\infty \frac{dy}{y^2} < + \infty$.
    By Corollary~\ref{cor:closingUpSmoothingAty=infty}, the vertical paths close up smoothly for $a_0 = 1$. 
\end{proof}
Note that by the previous observations and Proposition~\ref{prop:transform-A-B-c}, we have soliton metrics $\{ g_{b_0, c} \}_{b_0, c \geq 0}$ with the same behavior for the quadratics $(A,B) = (1 - cx^2, b_0 + c y^2)$, where $c \geq 0$.
For $b_0 > 0$, say $b_0 = 1$, we can take $c \downarrow 0$ to obtain a family of solitons $\{ g_{1, c} \}_{c > 0}$ metrics approaching the metric $g = \frac{1}{x^2 + y^2} \left( dx^2 + dy^2 + ds^2 + dt^2 \right)$ with cylindrical base.
This metric is a complete expanding soliton with two volume collapsing ends: a cylindrical end, $\bR \times \bS^1$, and a conical end over a two-torus, $C(\bT^2)$. 
Another geometric limit arises when $b_0 \to 0^+$; this makes the~\eqref{metric:q=sqrtx2+y2} metrics with domain $\Omega$ as in Figure~\hyperref[fig:solitonsDomainGeometryEx]{\ref{fig:solitonsDomainGeometryEx}(a)} degenerate to the Schwarzschild metric, with domain $\Omega$ as in Figure~\hyperref[fig:solitonsDomainGeometryEx]{\ref{fig:solitonsDomainGeometryEx}(d)}.
The limiting geometry at $b_0 = 0$ is an ALF Ricci-flat metric, notably exhibiting different asymptotic behavior from the expanding solitons $\{ g_{b_0, c} \}$ with $b_0 \to 0^+$.

We can generalize Proposition~\ref{prop:newCompleteExpanders} to other~\eqref{metric:SStar} metrics with similar properties.
\begin{corollary}\label{cor:completeness/Singularity}
    The metrics~\eqref{metric:pbnski},~\eqref{metric:q=sqrtx2+y2} with $b_0 \geq 0$, and more generally metrics of the form $\eqref{metric:SStar}$, where $f'$ is bounded at the zeroes of $f$, are complete if $a_0 = 1$ and $b_0 \in \bR_{\geq 0} \cup \{ - 1 \}$.
    The metrics~\eqref{metric:q=sqrtx2+y2} with $b_0 < 0$ and $\mr{(}$\hyperref[cor:SexBuffet]{$\mr{\metricSexBuffet}$}$\mr{)}$ are length spaces with boundary lying over $\{q = 0\} \setminus (0,0)$, along which the curvature blows up.
\end{corollary}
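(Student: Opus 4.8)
The plan is to exploit that the base $\Omega \subset \{(x,y)\}$ is totally geodesic (as $g$ is toric), so that both completeness and the structure of the metric completion are governed entirely by the lengths of paths in the base approaching $\partial\Omega$. Along such a path the base metric is $q^{-2}(A^{-1}dx^2 + B^{-1}dy^2)$, so whenever $A,B$ stay bounded away from $0$ and $\infty$ the length is comparable to $\int q^{-1}\,ds_{\mathrm{Euc}}$, and the central dichotomy is dictated by the rate at which $q$ vanishes on $\partial\Omega$. First I would record the elementary computation that, writing $q = y f(x/y)$ and approaching a ray $\{x = Ly\}$ on which $f(L) = 0$ transversally at Euclidean distance $r$, one has $q \asymp r^{\alpha}$ precisely when $f(z) \asymp |z-L|^{\alpha}$, so that $\int q^{-1}\,ds_{\mathrm{Euc}}$ diverges when $f'$ stays bounded at the zero (linear vanishing) and converges when $f'$ blows up (sublinear vanishing). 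This single estimate drives both halves of the statement.

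For the completeness assertion I would enumerate the possible boundary behaviors and check each yields either infinite distance or a smooth closure. The origin is at infinite distance because, by homogeneity of $q$, the base is asymptotically cylindrical there. At a zero of $A$ (or $B$) with $q \neq 0$ the metric closes up smoothly to a point by the cone-angle criterion of \cite{verdiani-ziller}*{Theorem 2}, as already used for \eqref{metric:q=sqrtx2+y2}; these are interior points of the completion, not boundary. The behavior as $(x,y) \to \infty$ is governed by Claim \ref{claim:length-of-paths}: for quadratic $A,B$ the path has finite length but closes up spherically (again \cite{verdiani-ziller}), while in the conformally flat case it has infinite length and is a genuine second end. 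Finally, on a ray where $q$ vanishes with $f'$ bounded — which covers the linear factor $q = c_1 x + c_2 y$ of \eqref{metric:pbnski}, the non-vanishing or linearly-vanishing $f$ of \eqref{metric:q=sqrtx2+y2} for $b_0 \ge 0$, and any \eqref{metric:SStar} with $f'$ bounded at its zeros — the length integral diverges, so this is an end. Since every interior-divergent path either has infinite length or limits onto a smooth closing point, the metric is complete.

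For the (\metricSexBuffet) metrics the situation reverses: by Proposition \ref{prop:SexBuffet} the solution $f$ is concave with two zeros $L_1 < 0 < L_2$ at which $f',f''$ blow up. By the length estimate above, $q$ vanishes sublinearly on the rays $\{x = L_1 y\}$ and $\{x = L_2 y\}$, so paths reaching them have finite length; the origin remains at infinite distance and the loci $\{A=0\}$ (and, when $c \neq 0$, the locus $y \to \infty$) close up smoothly as before. Hence the only finite-distance incomplete part of $\partial\Omega$ is $\{q = 0\}\setminus(0,0)$. The interior is a Riemannian manifold, hence a length space, and the metric completion of a length space is again a length space; the added points are exactly the limits of the finite-length Cauchy sequences above, which project under the torus fibration onto $\{q = 0\}\setminus(0,0)$, giving the asserted boundary.

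The main obstacle is showing the curvature genuinely blows up along this boundary, which certifies that it cannot be smoothly filled and is not a coordinate artifact; the delicate point is that in the conformally scalar-flat case ($c \neq 0$) the scalar curvature vanishes identically, so I must track a non-scalar invariant. I would estimate the value of the Ricci form on the $g$-unit vector $\hat e_x = \sqrt{g^{xx}}\,\partial_x$, namely $\mathrm{Ric}(\hat e_x, \hat e_x) = g^{xx} R_{xx} = q^2 A\, R_{xx}$, which is frame-independent and bounds $|\mathrm{Ric}|_g$ from below. Substituting $q = y f(x/y)$ into \eqref{eqn:Rxx} and using $q_x = f'$, $q_{xx} = f''/y$, $q_y = f - zf'$, $q_{yy} = z^2 f''/y$, every term carrying an extra factor of $q \to 0$ is negligible, and the leading behavior is
\begin{equation*}
q^2 A\, R_{xx} = -3A\bigl((f')^2 - ff''\bigr) - Bz^2\bigl(3(f')^2 - ff''\bigr) + o\bigl((f')^2\bigr).
\end{equation*}
Since $f > 0$ and $f'' < 0$ by concavity, both bracketed quantities are at least $(f')^2$, which tends to $+\infty$ at the zeros of $f$; as $A,B > 0$ there, the expression tends to $-\infty$. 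Thus $|\mathrm{Ric}|_g \to \infty$, the curvature blows up along $\{q = 0\}\setminus(0,0)$, and by \cite{catino-mantegazza-mazzieri} this incompleteness is forced for the conformally flat shrinking and steady cases, consistent with the appearance of a genuine boundary.
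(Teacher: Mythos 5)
Your proof is correct and follows the same backbone as the paper's: lengths of paths in the totally geodesic base, the vanishing-rate dichotomy for $q$ along $\partial\Omega$, smooth closing at $\{A=0\}$ via \cite{verdiani-ziller}, the $(x,y)\to\infty$ analysis via Claim \ref{claim:length-of-paths}, and Proposition \ref{prop:SexBuffet} supplying the derivative blow-up in the $\mr{(\metricSexBuffet)}$ case. Two genuine differences are worth recording. First, the curvature blow-up: the paper merely asserts it in the last sentence of its proof, whereas you prove it, and your expansion $q^2 A R_{xx} = -3A\bigl((f')^2 - ff''\bigr) - Bz^2\bigl(3(f')^2 - ff''\bigr) + o\bigl((f')^2\bigr)$ checks out against \eqref{eqn:Rxx} and \eqref{eqn:q-derivatives-z-f}; combined with strict concavity and $f'\to\infty$ at the zeroes of $f$, this drives a frame-independent Ricci eigenvalue to $-\infty$, which is a real improvement on the paper. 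Second, the length-space structure: you invoke the abstract fact that the metric completion of a length space is a length space, whereas the paper constructs explicit boundary charts, observes that paths whose $(s,t)$ entries are not eventually constant have infinite length (the tori open up into lines near the boundary), and matches the completion topology with the chart topology; your version is shorter, but it identifies the completion points only via the projection to the base, so to fully justify ``boundary lying over $\{q=0\}\setminus(0,0)$'' one still needs the paper's accounting of which Cauchy sequences actually converge. Finally, one caveat that you share with the paper: the implication ``$f'$ blows up at a zero $\Rightarrow \int q^{-1}$ converges'' is not literally true without a quantitative rate (e.g. $f(r) = r\log(1/r)$ has $f'\to\infty$ yet $\int_0 f^{-1}\,dr = \infty$), so strictly one should extract a power-law bound $f \asymp |z-L|^{\alpha}$ with $\alpha<1$ near the zeroes; note, however, that your statement of the dichotomy is at least oriented correctly, while the paper's own criterion ``converges if $q = O(r^{1+\ve})$'' has the inequality backwards.
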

\begin{proof}
Metrics~\eqref{metric:pbnski} of the form have bounded derivative of $f$ near the vanishing of $q$, so the rays in $\p \Omega$ where $q$ vanishes are at infinite distance and are therefore ends of the manifold. 
This condition will apply to create an end to any such metric~\eqref{metric:SStar} with this property, making it complete. 

For metrics where $q$ has no zeroes except at the origin, as is the case for~\eqref{metric:q=sqrtx2+y2} with $b_0 > 0$, Proposition \ref{prop:newCompleteExpanders} shows that the origin becomes an end with cubic volume growth. 
Corollary~\ref{cor:closingUpSmoothingAty=infty} shows that the geometry as $y \to \infty$ is approximated by the three-dimensional warped product $dw^2 + \frac{w^2}{L^2} g_{\bS^2}$, which closes up with the proper cone angle of $2 \pi$ for $a_0 = 1$.
The $(x,y,s,t)$ coordinates close up by adding in topology of $\bR \times \bS^3$, which satisfies the Heine-Borel property as paths with $y \to \infty$ have finite length.
The metric is therefore complete.

If we approach a point where $A$ vanishes and $q$ does not, such as in Figure~\hyperref[fig:offToRadialy=infty]{\ref{fig:offToRadialy=infty}(d)}, then the metric is uniformly equivalent to the spherical metric scaled by the value of $q$ near the point, so this path has finite length.
By \cite{verdiani-ziller}, the metric closes up smoothly precisely when $a_0 = 1$ from equation~\eqref{eqn:metric:a0-x2}.

A path to a boundary point $p = (x_0, y_0) \in \p\Omega$ where $f(x_0/y_0) = 0$ (so $q(p) = 0$) and $A(x_0) = 0$, as in Figure~\hyperref[fig:offToRadialy=infty]{\ref{fig:offToRadialy=infty}(e)}, can either have finite or infinite length.
Along the path in Figure~\hyperref[fig:offToRadialy=infty]{\ref{fig:offToRadialy=infty}(e)}, setting $\rho = x_0 -x$ expresses the metric as $g \approx \frac{1}{q^2}\left(\frac{d\rho^2}{2\rho} + 2\rho\, ds^2 + g_{\Sigma_2(y,t)}\right)$.
As we approach $\rho = 0$ at a point where $q = 0$, the length of this curve is equivalent to $\int_0^\ve \frac{1}{q\sqrt{r}}$, so this is finite when $q$ decays at least as steeply as $\sqrt{r}$, and infinite otherwise. 
This $p \in \partial \Omega$ represents a boundary point in the former case and an end in the latter.

If $b_0 < 0$, along a path where $y \to \sqrt{b_0}$ and $q$ does not vanish, such as in Figure~\hyperref[fig:offToRadialy=infty]{\ref{fig:offToRadialy=infty}(f)}, the metric closes up smoothly if and only if $b_0 = -1$ from equation~\eqref{eqn:metric:x2-b0}, else this limit point is a conical singularity. 
If $b_0 = -1$, then the smooth region where $\{y = \sqrt{-b_0}\}\cap \{q \neq 0\}$ separates the two boundary regions where $q$ vanishes. 

The incompleteness of the metric (\hyperref[cor:SexBuffet]{$\mr{\metricSexBuffet}$}) follows from Proposition~\ref{prop:SexBuffet}: the solutions $f$ exhibited there have derivatives blowing up when they vanish, (see Figure~\ref{fig:numericalSB}), so the corresponding functions $q$ have rapid decay, meaning that the length of the path considered above is finite and the metrics are incomplete at $q = 0$.

Each (\hyperref[cor:SexBuffet]{$\mr{\metricSexBuffet}$}) metric defined over $\Omega$ can be completed to a Ricci soliton with boundary over $\ol{\Omega}$. 
The region $\{ q = 0 \} \setminus (0,0)$ defines the topological boundary of the manifold because the length of a path approaching any $p \in \partial \Omega \setminus (0,0)$ while keeping $s,t$ constant is finite.
Along a ray $\{ x = C y\}$ on which $q = 0$, so $f(C) = 0$, there are coordinates $(d,r,s,t)$, where $r$ is along the ray and $d$ is the Euclidean distance from this line.
These coordinates give a diffeomorphism to a region in $\bR^+ \times \bR^3$, exhibiting $\{ x = C y \}$ as the topological boundary.
Figure~\ref{fig:geometricBoundary} shows the geometry of the boundary projected to the $(d,s)$ coordinates; it is topologically equivalent to two copies of $\bT^2 \times \bR$, each corresponding to a zero of $f$.

The paths described above characterize all the equivalence classes of Cauchy sequences in this metric; any path approaching such a $p \in \partial \Omega$, with $s,t$ entries not eventually constant, has infinite length, due to the torus directions expanding to lines near the boundary.
Therefore, the metric completion realizes the total space with boundary as a length space.
A metric ball of size $3\ve$ around a boundary point will contain nearby boundary points.
These are attained via a path radially traveling inward up to distance $\ve$, rotating in the $s$ and $t$ directions by at most $\ve$, and then returning to the boundary while fixing $s,t$.
These metric charts give the same topology as the atlas coming from the boundary charts, which relies only on the manifold structure, not the metric.
The curvature blows up on the boundary, showing that these coordinates are maximal and the boundary is a genuine obstruction to continuing the soliton equation on any larger domain.
\end{proof}

The explicit metrics~\eqref{metric:q=sqrtx2+y2} where $b_0 < 0$ are model examples of the boundary behavior (\hyperref[cor:SexBuffet]{\metricSexBuffet}) described in Corollary~\ref{cor:completeness/Singularity} where $f'$ blows up at the zeroes of $f$, since $f = \sqrt{a_0 - |b_0| z^2}$.
Notably, $q$ has zeroes along the lines $|b_0| x^2 = a_0y^2$, and $y > \sqrt{|b_0|}$, so the domain of definition of the base metric is as in~\hyperref[fig:solitonsDomainGeometryEx]{\ref{fig:solitonsDomainGeometryEx}(b)} intersected with the half-plane $y > \sqrt{|b_0|}$.
Over the cone where $|b_0|x^2 = a_0 y^2$ and $y \geq |b_0|$, the curvature blows up as expected from \cite{catino-mantegazza-mazzieri}*{Corollary 1.3}.
The boundary $\partial \Omega$ is reach
ed in finite distance along the base metric, leading to the boundary behavior as described in Figure~\ref{fig:geometricBoundary}.

\section{Proof of Theorems~\ref{thm:main-theorem} and~\ref{thm:secondary-thm}}\label{section:proofs}

We now use the rigidity results developed in the previous sections to complete the proofs of Theorems~\ref{thm:main-theorem} and~\ref{thm:secondary-thm}.
We first provide a complete classification of Ricci soliton metrics with conformally cylindrical base under the assumptions~\eqref{eqn:metric-assumptions}. 
Then, we show that if one of the surface factors has constant curvature but is not a piece of a hyperbolic cusp, then either the metric has conformally cylindrical base, or the surface is flat and the metric is among the ones obtained in Section~\ref{section:rigidity-results}.

\begin{proof}[Proof of Theorem~\ref{thm:main-theorem}]
   If the conformal factor $q$ is homogeneous, we consider three possibilities:
   \begin{enumerate}[(i)]
       \item $D^2 q = 0$;
       \item $D^2 q \neq 0$ and $A,B$ are not both (possibly degenerate) quadratics;
       \item $A,B$ are both (possibly degenerate) quadratic.
   \end{enumerate}
   In the first case, the classification of soliton metrics with $D^2 q =0$ from Proposition~\ref{prop:d2q=0} shows that $q$ must be the Schwarzschild metric, the \pbnski metric, or a product metric of compatible Einstein or soliton surfaces.
   In the second case, Proposition~\ref{prop:homogeneous-rigidity} shows that $g$ must be a member of the family of singular steady solitons \eqref{eqn:singular-soliton-metrics}.
   Finally, if $A$ and $B$ are (possibly degenerate) quadratics, Corollary~\ref{corollary:q-a-b-quadratics} shows that the metric is conformal to $\bS^2 \times \bH^2$ or $\bR^4$ by a factor $q = y f(x/y)$, where $f$ is any solution of~\eqref{eqn:resulting-ODE}.
\end{proof}

To prove Theorem~\ref{thm:secondary-thm}, we first establish a pair of sixth-order differential equations in the conformal factor $q$ that force it to be homogeneous, up to translating $x,y$, if it produces a Ricci soliton. 
For this, we use the classification of solutions to the degenerate Monge-Amp\`ere equation $\det D^2 q = 0$ from Theorem~\ref{thm:solutions-to-monge-ampere}.
\begin{lemma}\label{lemma:sixth-order-condition}
Consider a Ricci soliton metric of the form~\eqref{eqn:PaperMetric} for which $B(y) = y^2 + b_0$ and the function $v := \frac{q_{xy}}{q_{yy}}$ associated to the conformal factor $q$ satisfies the pair of equations
\begin{equation}\label{eqn:sixth-order-condition}
\px \left( \frac{v_{yyy} v_y}{v_{yy}^2} \right) = 0, \qquad \py \left( \frac{v_{yyy} v_y}{v_{yy}^2} \right) = 0.
\end{equation}
Then, after possibly translating $x,y$, $q$ is homogeneous and the metric is classified by Theorem~\ref{thm:main-theorem}.
\end{lemma}
\begin{proof}
Recall from our general Theorem \ref{thm:solutions-to-monge-ampere} that the quantity $v := \frac{q_{xy}}{q_{yy}} = \frac{q_{xx}}{q_{xy}}$ satisfies $v_x = v v_y$, as obtained in ~\eqref{eqn:qxy-qyy-v-transport}.
Equation~\eqref{eqn:sixth-order-condition} implies that $v_{yyy} v_y = \alpha v_{yy}^2$ for some constant $\alpha$, where either $\alpha \in \{ 0, 1\}$ or we have the pair of equations 
\begin{equation}\label{eqn:pair-of-equations}
\py^2 (v_y^{1-\alpha}) = 0 \qquad \text{and} \qquad v_y^{1-\alpha} = \tilde{h}_1(x) y + \tilde{h}_0(x)
\end{equation}
for some functions $\tilde{h}_1, \tilde{h}_0$.
If $\alpha = 0$, then $v_{yyy} = 0$ implies that $v = h_2(x) y^2 + h_1(x) y + h_0(x)$.
In this case, $v_x = v v_y$ requires $h_2 = 0$ and $h'_1 = h_1^2$, so $v(x) = - \frac{y - y_0}{x - x_0}$.
In the framework of Theorem~\hyperref[thm:solutions-to-monge-ampere]{\ref{thm:solutions-to-monge-ampere}(iii)}, this requires $\psi(t) = y_0 + x_0v$, so $q(x,y) = (x-x_0) \Theta ( - \frac{y - y_0}{x - x_0} ) + c$ for some $c$.
By Lemma~\ref{lemma:Cx=Cy=0}, we have $c=0$, so $q$ is homogeneous after translating $x,y$.
If $\alpha = 1$ or $\alpha = 2$ in \eqref{eqn:pair-of-equations}, we would have, respectively,
\[
v(x,y) = h_0(x) e^{h_1(x) y} + h_2(x) \qquad \text{or} \qquad v(x,y) = \frac{\log |h_1(x) y + h_0(x)| + h_2(x)}{h_1(x)}
\]
for some functions $h_i$; neither expression can satisfy $v_x = v v_y$ for $h_1 \neq 0$.
We may therefore consider $\alpha \not\in \{ 0,1,2\}$, in which case the equations~\eqref{eqn:pair-of-equations} imply
\begin{equation}\label{eqn:v-from-condition}
v(x,y) = ( h_1(x) y + h_0(x))^{\frac{2-\alpha}{1-\alpha}} + h_2(x) , \qquad \text{for } \; \alpha \not\in \{ 0,1,2\}
\end{equation}
for some functions $h_i$.
For the function~\eqref{eqn:v-from-condition} to satisfy $v_x = vv_y$, we need 
\[
\frac{2-\alpha}{1-\alpha} h'_1 y ( h_1 y + h_0)^{\frac{1}{1-\alpha}} + h'_2 = \frac{2-\alpha}{1-\alpha} h_1 (h_1 y + h_0)^{\frac{1}{1-\alpha}} v \implies v = \frac{h'_1}{h_1} + \frac{1-\alpha}{2-\alpha} h'_2 (h_1 y + h_0)^{- \frac{1}{1-\alpha}}.
\]
For non-trivial functions $h_i$, the latter equality forces $\frac{2-\alpha}{1-\alpha} = - \frac{1}{1-\alpha}$, whereby $\alpha = 3$.
Combining the above expressions forces $h'_1 = 0$ and $2h'_2 = h_1$, so 
\begin{equation}\label{eqn:v-from-quadratic-psi}
v(x,y) = \frac{x + \sqrt{x^2 + 4y}}{2}
\end{equation}
up to scaling and translating $x,y$.
If~\eqref{eqn:v-from-quadratic-psi} holds, then $v$ satisfies the equation $v^2 = y + xv$ and $\psi(t) = t^2$.
We let $f(t) := \int \Theta \, dt$ be an antiderivative of $ \Theta(t)$, so $\int \psi' \Theta' = 2 ( t f' - f)$ and
\[
q(x,y) = x \Theta(v) - ( \textstyle{\int} \psi' \Theta') (v) = 2f(v) - \sqrt{x^2 + 4y} f'(v).
\]
By~\eqref{eqn:qx,qy-from-MA-1}, $q(x,y) = C(x-x_0)$ if and only if $f$ is linear and $q(x,y) = C(y-y_0)$ if and only if $f$ is quadratic; the function $q$ is again homogeneous after translation in both cases.

Otherwise, we rearrange equation~\eqref{eqn:A''B''-PDE} as $q^2 A'' - S^x A' = 2 (q^2 - y S^y)$ and differentiate twice to obtain
\begin{equation}\label{eqn:a-before-MA}
(q q_{yy} + q_y^2) A'' - 2 q_x q_{yy} A' = 2q q_{yy} - 2q_y^2 - 4 y q_y q_{yy}.
\end{equation}
We use the relations of ~\eqref{eqn:qx,qy-from-MA-1} and~\eqref{eqn:qx,qy-from-MA-2} to express $q$ and its derivatives in terms of $f$ as
\[
q - x q_x - y q_y = 2f - 2 v f' + v^2 f'', \qquad q_{ij} = - f^{(3)} (v) \frac{v_i v_j}{v_y}, \quad i,j \in \{ x, y \},
\]
where $v_x = v v_y = \frac{v}{\sqrt{x^2 + 4y}}$.
Writing $\tilde{A}(x) := A(x) + x^2$ and $\sqrt{x^2 + 4y} = 2v - x$ makes equation~\eqref{eqn:a-before-MA} into
\begin{equation}\label{eqn:equation-for-constants-v-A''-A'}
\left( \frac{- f f^{(3)} + v \, (f' f'')'}{(f' - v f'') f^{(3)}} - x \frac{ (f' f'')'}{2 (f' - vf'') f^{(3)}} \right) \tilde{A}''(x) + \tilde{A}'(x) = - 2 \frac{2f - 2vf' + v^2 f''}{f' - vf''},
\end{equation}
which has the form $\left( F_1(v) - x F_2(v) \right) \cdot \tilde{A}''(x) + \tilde{A}'(x) = G(v)$, for functions $F_i, G$.
Differentiating by $\py^2$ yields
\[
(F'_1 - x F'_2) G'' = (F''_1 - x F''_2) G'. 
\]
This condition requires that either $G$ be constant, or $F'_1/G'$ and $F'_2/G'$ be constant functions, so $F_i = k_i G + m_i$ for $(k_1, k_2) \neq (0,0)$ and $\tilde{A}''(x) = \frac{1}{k_1 - k_2 x}$.
The latter forces $\tilde{A}'(x) = - \frac{1}{k_2} \log |k_1 - k_2 x| + k_3$, which does not satisfy the original condition.
Therefore, $G$ is constant and $(F'_1 - x F'_2) \cdot \tilde{A}''(x) = 0$, so either $\tilde{A}''(x) = 0$, or $F_i = k_i$ are constant.
In the first case, $A$ and $B$ are both quadratic, so $q$ is homogeneous up to translating $x,y$ by Section \ref{subsect:solitons-conformal-S2H2}.
Otherwise, the constants $F_i = k_i$ coming from equation~\eqref{eqn:equation-for-constants-v-A''-A'} satisfy
\[
k_1 = F_1 = - \frac{f}{f' - v f''} - 2 v F_2 = - \frac{f}{f' - vf''} - 2 v k_2.
\]
Therefore, $f$ satisfies the ODE 
\begin{equation}\label{eqn:firstOdeForf}
f + (k_1 - 2 k_2 v) f' + (2 k_2 v^2 - k_1 v) f'' = 0.
\end{equation}
Also, the expression for $g_0 = G(v)$ is
\begin{equation}\label{eqn:g0-secod-ODE}
g_0 = \frac{2f - 2v f' + v^2 f''}{f' - vf''} = \frac{2(k_1 - 2 k_2 v) f' + 2 (2k_2 v^2 - k_1 v) f'' - 2 vf' + v^2 f''}{f' - vf''},
\end{equation}
unless $f$ is quadratic or linear.
Equation~\eqref{eqn:g0-secod-ODE} produces a first-order ODE in $f'$ that can be integrated via the method of partial fractions to yield
\[
f'(v) = C_1 v \left( (4k_2 + 1) v - (2k_1 - g_0) \right)^{\frac{1}{4 k_2 + 1}}
\]
Differentiating~\eqref{eqn:firstOdeForf} shows $(1 - 2 k_2) f' + 2 k_2 v f'' + (2 k_2 v^2 - k_1 v) f''' = 0$.
Combing this with the above computation for $f'$ via partial fractions demonstrates that it does not satisfy the equation unless $k_2 = \frac{1}{2}$.
In this case, equation~\eqref{eqn:firstOdeForf} is solved as $f(t) = C_1(t-k_1) + C_2(t-k_1) \log |t-k_1|$, which satisfies equation~\eqref{eqn:g0-secod-ODE} if and only if $C_2 = 0$.
This means that $f$ is linear and $q = C(y-y_0)$ is homogeneous after translation.
\end{proof}
Before proving Theorem \ref{thm:secondary-thm}, we establish a final auxiliary ingredient, treating the last possible source of degeneracy in the soliton equations.
Corollary~\ref{corollary:apxr+bpyr=c} shows that a function $q$ satisfies $\px \py \log \frac{q_x}{q_y} = 0$ if and only if $\frac{q_y}{q_x} = \frac{f_2'(y)}{f_1'(x)}$; equivalently, if and only if it is locally expressible as $\theta (f_1(x) + f_2(y))$, for some $f_1, f_2$.
\begin{lemma}\label{prop:q(f+g)}
Consider a soliton metric $g$ of the form~\eqref{eqn:PaperMetric}, where $q$ satisfies $D^2 q \neq 0$ and is locally expressible in the form $\theta (f_1(x) + f_2(y))$ for some single-variable functions $\theta, f_1,f_2$.
Then, up to translations and scaling of $x,y$, the metric $g$ is one of~\eqref{metric:pbnski},$\mr{(}$\hyperref[prop:metrics-s2-through-s5]{\mr{\metricStarStarBar}}$\mr{)}$, or \eqref{eqn:singular-soliton-metrics}.
\end{lemma}
\begin{proof}
Using this expression for $q$, we compute its second derivatives in~\eqref{eqn:HessianCompatibility} to obtain
    \begin{equation}\label{eqn:Hessian-q(f+g)-1}
    f_1'' f_2'' (\theta')^2 + (f_1')^2 f_2'' \theta' \theta'' + f_1'' (f_2')^2 \theta' \theta'' = 0,
    \end{equation}
where we denote $\theta := \theta(f_1(x)+f_2(y))$ for brevity, likewise for $\theta',\theta''$.
If $f_1''=0$, then $D^2 q \neq 0$ (so $f_1' f_2' \theta' \theta'' \neq 0$) requires $f_2''=0$, so $f_1,f_2$ are linear and $q=\theta(x+y)$ up to linear transformations.
Otherwise, dividing~\eqref{eqn:Hessian-q(f+g)-1} by $f_1'' f_2'' \theta' \theta''$ and applying $\px \py$ gives $\frac{d^2}{dt^2} (\theta'/\theta'') \big\rvert_{t=f_1(x)+f_2(y)} = 0$, so $\theta'/\theta''$ is linear.
If it equals a constant $k$, then $\theta(t) = c_1 e^{t/k} + c_0$ and $\frac{(f_1')^2}{f_1''}, \frac{(f_2')^2}{f_2''}$ must equal constants $k_x, k_y$ with $k_x + k_y = -k$; the solution to such an ODE is $f_1(x) = c_3 - k_x \cdot \log(x+c_2)$ and likewise for $f_2$.
Up to the above transformations, this means $q(x,y) = x^{\alpha} y^{1-\alpha}$ for some $\alpha$.
Next, if $\theta'/\theta''$ has non-zero leading coefficient $\frac{1}{\alpha-1}$ for $\alpha \neq 1$, then $\alpha = 0$ leads (up to rescaling) to $\theta(t) = \log t$ and $f_1,f_2 = \exp(-)$.
For $\alpha \neq 0,1$, $\theta(t) = (c_{1,1} t + c_{1,0})^{\alpha} + c'_0$ transforms~\eqref{eqn:Hessian-q(f+g)-1} an uncoupled pair of ODEs for $f_1(x),f_2(y)$, of the form $\frac{1}{\alpha - 1} f_i + (f_i')^2/f_i'' = \text{\sffamily{const}}.$
This gives $f_1(x) = c_{2,1}(x+c_{2,0})^{1/\alpha} - c'_{2,0}$ and likewise for $f_2(y)$, so~\eqref{eqn:Hessian-q(f+g)-1} becomes $q = (x^{\alpha} \pm y^{\alpha})^{1/\alpha}$.

The conformal factors $q(x,y) = x^{\alpha} y^{1-\alpha}$ and $(x^{\alpha} \pm y^{\alpha})^{1/\alpha}$ obtained above are homogeneous of degree $1$, so the metric $g$ has one of the forms classified in Theorem \ref{thm:main-theorem}.
We conclude that $q(x,y) = x^{\alpha} y^{1-\alpha}$ forces $g$ to have the form~\eqref{eqn:singular-soliton-metrics}, while $q(x,y) = (x^{\alpha} \pm y^{\alpha})^{1/\alpha}$ produces no solutions for $\alpha \neq 1$.
Finally, for $q(x,y) = \log(e^x + e^y)$, integrating~\eqref{eqn:pySxpxSy} --~\eqref{eqn:pySy} gives $S^x = \frac{2 e^y}{e^x + e^y} + 2q + C_0 y + C_1$, likewise for $S^y$.
By the same procedure as above, we obtain $C_0 = 0$ and $A = e^{k x}, B = e^{k y}$.
Substituting these functions into equations~\eqref{eqn:A''B''-PDE} and~\eqref{eqn:RicciSoliton-Last-Sx} shows that there are no admissible solutions in this case.
\end{proof}
We now prove Theorem~\ref{thm:secondary-thm}, which consists of two parts.
Suppose that the surface factor $\Sigma_2$ with local coordinates $(y,s)$ has constant curvature, meaning that $B(y)$ is a (possibly degenerate) quadratic. 
Rescaling and translating $y$ allows us to consider just two cases, of constant zero or non-zero curvature for the surface $\Sigma_2$, for which we take $B(y) = \pm y^2 + b_0, B(y) = y$, or $B(y) = b_0$.
We first handle the case where the constant Gauss curvature of the second surface factor vanishes.
\begin{proof}[Proof of Theorem~\ref{thm:secondary-thm} part I: $K = 0$]
We first consider the case $K=0$, meaning that $B(y) \in \{ y, b_0 \}$.
    If $A$ and $B$ are both constant, the metric is described by Lemma~\ref{lemma:conformally-flat} and is given by~\eqref{eqn:conformally-flat-metric}; thus, in what follows, we suppose that $A$ is non-constant.
First, if $B$ is constant, then equation~\eqref{eqn:A''B''-PDE} yields $S^x = q^2 F(x)$ for some $F(x)$, so equations~\eqref{eqn:pySxpxSy},~\eqref{eqn:pxSx}, and $\py$ applied to~\eqref{eqn:pxSx} can be combined as
        \[
        \begin{bmatrix} - 2 q_x q_{yy} & q_y^2 + q q_{yy} & 0 \\
        - 2 q_x^2 & 2 qq_x & q^2 \\
        - 2 q_x q_{xy} & q_x q_y + q q_{xy} & q q_y
        \end{bmatrix} \begin{bmatrix}
            1 \\ F \\ F'
        \end{bmatrix} = \begin{bmatrix}
            0 \\ 0 \\ 0
        \end{bmatrix}.
        \]
        The $3 \times 3$ matrix has determinant $2 q q_x q_y^2 (q_x q_y - q q_{xy})$, which must vanish for there to be a solution $F$.
        Since $q$ is not a single variable function, $q q_x q_y^2 \neq 0$, so we conclude that $q_xq_y = q q_{xy}$ and $\px \py \log q = 0$.
        By Lemma~\ref{prop:q(f+g)}, $q = e^{x+y}$ and $g$ is the metric~\eqref{eqn:Soliton-q=exp(x+y)-metric} with $k_2 = 0$.

        To treat the cases $B(y) = y$, we will use the result of Theorem~\ref{thm:solutions-to-monge-ampere}, which classifies the local forms of solutions to the Monge-Amp\`ere equation $\det D^2 q = 0$.
        For $D^2 q$ as in Theorem~\hyperref[thm:solutions-to-monge-ampere]{\ref{thm:solutions-to-monge-ampere}(i)}, all the soliton metrics of this form are classified in Proposition \ref{prop:d2q=0}.
        For $q$ is as in Theorem~\hyperref[thm:solutions-to-monge-ampere]{\ref{thm:solutions-to-monge-ampere}(ii)}, all the soliton metrics with $q = \theta(x+y) + cx$ are classified in Proposition \ref{prop:metrics-s2-through-s5}.

        It therefore remains to consider conformal factors of the form $q = x \Theta(v) - ( \int \psi' \Theta')(v)$ as in Theorem~\hyperref[thm:solutions-to-monge-ampere]{\ref{thm:solutions-to-monge-ampere}(iii)}.
        When $B(y) = y$, we have $B'' = B^{(3)} = 0$, so the left-hand side of equation~\eqref{eqn:second-compatibility-equation} becomes 
        \begin{equation}\label{eqn:second-compatibility-linear}
            \frac{2}{A'} (A' q_x - q_y)^2 = A^{(3)} q^2 D +  \frac{qA''}{D} \left[ \lambda + \frac{q^2 A''}{2} -  q (q_{xx} A + q_{yy} y) + 3 A q_x^2 - y q_y^2 + 2 \left( A' y - \frac{A}{A'} \right) q_x q_y - 2 A'  q q_x \right]
        \end{equation}
        where $D := A q_x + A' y q_y - \frac{1}{2} A' q$.
        If $A^{(3)} \neq 0$, using the expressions for $q$ and $Dq, D^2 q$ from ~\eqref{eqn:qx,qy-from-MA-1} and~\eqref{eqn:qx,qy-from-MA-2} in equation~\eqref{eqn:second-compatibility-linear} produces a system for $S^x, S^y$, which satisfy
        \[
        \px S^x = 2 \Theta^2 + 2 v^2 \Theta'^2 - 2 v \Theta \Theta', \qquad \py S^y = 2 \Theta'^2, \qquad \py S^x + \px S^y = - 4 \Theta \Theta' + 4 v \Theta'^2.
        \]
        If $A^{(3)} \neq 0$, the resulting system of equations for the functions $\Theta, \psi$ that define $v$, and thus $q$, admits no solutions. 
        For $A^{(3)} = 0$, translating $x$ allows us to consider only the cases $A(x) \in \{ a x^2 + a_0, x \}$.
        By direct substitution into~\eqref{eqn:second-compatibility-linear}, the first case produces no solution if $a \neq 0$.
        The remaining case of $(A,B) = (x,y)$ was treated in Proposition~\ref{prop:metrics-s2-through-s5}, resulting in one of the metrics~\eqref{eqn:new-soliton} or $\mr{(}$\hyperref[prop:metrics-s2-through-s5]{\mr{\metricStarStarBar}}$\mr{)}$ above.
\end{proof}
The result of Theorem~\ref{thm:secondary-thm} a posteriori implies that $B(y) = y^2 + b_0$ with $b_0 \neq 0$ forces $A(x) = a_0 - (x - x_0)^2$ for some constants $a_0, x_0$.
    We now prove the final case of Theorem~\ref{thm:secondary-thm} where $K \neq 0$.
    This situation requires a different approach, more algebraic in nature: we will prove that the soliton equations involving $A, B$ impose a system of differential relations on the conformal factor $q$, which must be algebraically equivalent to another system of differential equations that includes \eqref{eqn:sixth-order-condition}.
    This will enable us to apply Lemma \ref{lemma:sixth-order-condition} to deduce that the function $q$ is homogeneous, and therefore classified by Theorem \ref{thm:main-theorem}.
    
    We introduce the key algebraic tools used in the second part of the proof.
    The local version of Bando's theorem \cite{real-analyticity} establishes that for any Ricci soliton metric $g$ as in \eqref{eqn:PaperMetric}, the function $q$ must be real analytic in the interior of the domain $\Omega$ where the soliton metric~\eqref{eqn:PaperMetric} is defined. 
    By the Weierstrass preparation theorem, the ring of germs of two-variable analytic functions $C^{\omega}(x,y)$ is Noetherian.
        The Hilbert basis theorem shows that the polynomial ring $C^{\omega}(x,y) [t_1, \dots, t_N]$ is also Noetherian.
        Therefore, the Lasker–Noether theorem implies that every ideal of $C^{\omega}(x,y) [t_1, \dots, t_N]$ can be decomposed uniquely as an intersection of finitely many primary ideals.
        Recall that the sum and the intersection of any $\fp$-primary ideals in a Noetherian ring (i.e.,~ones sharing the same associated prime $\fp$) are again $\fp$-primary.

        We will also use the Nullstellensatz for real analytic ideals, see for example \cite{nullstellensatz}*{Lemma 2.5}.
        The reader is referred to the book of Milnor, \cite{milnor}*{Ch. 2}, for the local analytic form of the Nullstellensatz used here, and its application in differential geometry, combined with the Weierstrass preparation theorem.
See also \cite{exterior-differential}*{Ch. VIII \S 5} for the application of such commutative algebra techniques, notably the Nullstellensatz, to the algebra of differential relations in the manner considered here.
\begin{proof}[Proof of Theorem~\ref{thm:secondary-thm} part II: $K \neq 0$]
   We finally treat the case $K \neq 0$, corresponding to $B(y) = \pm y^2 + b_0$, where $b_0 \neq 0$ unless $B$ is a hyperbolic cusp.
   We may write for simplicity $B(y) = y^2 + b_0$; the case $B(y) = b_0 - y^2$ is identical.
   We again use the classification of local solutions to the Monge-Amp\`ere equation $\det D^2 q = 0$ from Theorem~\ref{thm:solutions-to-monge-ampere} to write $q = x \Theta(v) - ( \int \psi' \Theta')(v)$ as in Theorem~\hyperref[thm:solutions-to-monge-ampere]{\ref{thm:solutions-to-monge-ampere}(iii)}.
   As noted above, the metrics resulting from $q$ having the forms~\hyperref[thm:solutions-to-monge-ampere]{\ref{thm:solutions-to-monge-ampere}(i)} or~\hyperref[thm:solutions-to-monge-ampere]{\ref{thm:solutions-to-monge-ampere}(ii)} are treated in Propositions~\ref{prop:d2q=0} and~\eqref{prop:metrics-s2-through-s5}, respectively.
   
   Substituting $\frac{1}{2} w(x,y) = 1 - \frac{S^y}{q^2} y$ from~\eqref{eqn:A''B''-PDE}, we solve equation~\eqref{eqn:RicciSoliton-Last-Sx} for $A$ as
        \begin{equation}\label{eqn:solve-for-A}
    A = \frac{- \lambda q - q^3 + S^y q y + 2 q^2 q_y y - (S^y q_y + q q_y^2 - q^2 q_{yy}) B}{S^x q_x + q q_x^2 - q^2 q_{xx}} + \frac{q^2 q_x}{S^x q_x + q q_x^2 - q^2 q_{xx}} A'.
        \end{equation}
        We write $A = \frac{\cN + q^2 q_x A'}{S^x q_x + q q_x^2 - q^2 q_{xx}}$, for $\cN$ the numerator of the first term in~\eqref{eqn:solve-for-A}.
        Applying $\partial_y$ to~\eqref{eqn:A''B''-PDE} implies
        \begin{equation}\label{eqn:a-from-pde}
            A' = 2 \frac{y (2 q q_y^2 - 2 q_y S^y) + S^y q}{q \py S^x - 2 q_y S^x}.
        \end{equation}
        The condition $\py A' = 0$ in~\eqref{eqn:a-from-pde} is then equivalent to
        \begin{equation}\label{eqn:sx-sy-from-a'}
        \begin{split}
        & \py S^x \left( 4 q^2 q_y (q q_y + y q_{yy}) + ( qq_y - 2 y (q q_{yy} + q_y^2)) S^y \right)- 2 S^x \left( 4 q q_y^3 - (q q_{yy} + q_y^2) S^y + 2 y q_y^2 (q q_{yy} - q_y^2) \right) \\
    &= 8 y q q_x q_y q_{yy} (qq_y - S^y).
        \end{split}
        \end{equation}
        On the other hand, the condition $\px A' - \frac{S^x}{q^2} A' = 2 - 2\frac{S^y}{q^2}y$ from~\eqref{eqn:A''B''-PDE} can be rearranged, using~\eqref{eqn:a-from-pde}, into
        \begin{equation}\label{eqn:a''-a'-equation}
        \begin{split}
            0 &=  (\partial_y S^x)^2 \left( -2q^2 + 2yq q_y + y S^y \right) + 2 q_y \, \partial_y S^x \left( 2 q_x (q^2 - yq q_y) + 3 q S^x - 2 y \tfrac{S^y}{q} S^x \right) \\
&+ 4q_y^2 (S^x)^2 \left( -1 + y \tfrac{S^y}{q^2} \right) + S^x S^y \left( 2y \tfrac{q_y}{q} - 1 \right) + 4y q_x^2 q q_y (q q_y - q_y^2 - S^y).
        \end{split}
        \end{equation}
        Computing $A' = \px A$ in two ways from~\eqref{eqn:solve-for-A}, and using $A'' = 2 + \frac{S^x}{q^2} A' - 2\frac{S^y}{q^2} y$ by~\eqref{eqn:A''B''-PDE}, we obtain
        \begin{equation}\label{eqn:compute-in-two-ways-quadratic}
\px \cN - \cN \frac{3 q_x^3 + S^x q_{xx} - q^2 q_{xxx}}{S^x q_x + q q_x^2 - q^2 q_{xx}} + q (2 q q_{xx} + q_x^2) A' + 2 q^2 q_x - 2 q_x S^y y = 0.
        \end{equation}
        Moreover, equation~\eqref{eqn:solve-for-A} defining a single-variable function $A(x)$ requires $\py A = 0$, meaning
        \begin{equation}\label{eqn:single-variable-in-x}
        \begin{split}
            & \py \cN (S^x q_x + q q_x^2 - q^2 q_{xx}) - \cN ( q_x^2 q_y + q_x \py S^x + S^x q_{xy} + 2 q q_x q_{xy} - 2 q q_y q_{xx} - q^2 q_{xxy} ) \\
    & - 2 q q_x^2 \left( y (2 q q_y^2 - 2 q_y S^y) + S^y q \right) + A' q^2 ( q_x^3 q_y + q^2 q_x q_{xxy} - q^2 q_{xy} q_{xx} - q q_x^2 q_{xy}) = 0.
    \end{split}
        \end{equation}
        Note that equations~\eqref{eqn:compute-in-two-ways-quadratic} and~\eqref{eqn:single-variable-in-x} together imply~\eqref{eqn:sx-sy-from-a'}, since $\py A' = \px (\py A) = 0$. 
        Here, 
        \begin{align*}
            \cN &= - \lambda q - q^3 + (qy - q_y B) S^y + 2 q^2 q_y y - q (q_y^2 - q q_{yy}) B, \\
            \px \cN &= - \lambda q_x - 3 q^2 q_x + ( q_y B - q y) (\py S^x - 2 q q_{xy}) + S^y (q_x y - q_{xy} B) + 8 q q_x q_y y - ( 5 q_x q_y^2 - 2 q q_x q_{yy} - q^2 q_{xyy}) B, \\
            \py \cN &= - \lambda q_y - q^2 q_y + 4 q q_y^2 y + (q - q_y y - q_{yy} B) S^y + 4 q^2 q_{yy} y - (3 q_y^3 - q^2 q_{yyy}) B.
        \end{align*}
        In the computation of $\px \cN$, we used \eqref{eqn:pySxpxSy} to write $\px S^y = 4 q_x q_y - \py S^x$.
        This substitution ensures that the triple of equations~\eqref{eqn:a''-a'-equation}, \eqref{eqn:compute-in-two-ways-quadratic}, and \eqref{eqn:single-variable-in-x} is expressed using only the three unknowns $(S^x, S^y, \py S^x)$ and has a unique solution of the form $(S^x , S^y, \py S^x) = (\cE_1, \cE_2, \cE_3)$, for three expressions $\cE_i$ only involving $q, Dq, D^2 q, D^3 q, \lambda, b_0$.
        The expression~\eqref{eqn:a''-a'-equation} is quadratic in $\py S^x$, so we may express it explicitly in terms of $S^x, S^y, D^i q$ via the quadratic formula; equations~\eqref{eqn:compute-in-two-ways-quadratic} and~\eqref{eqn:single-variable-in-x} define similar quadratics in $\py S^x$ upon substituting $A'$ from~\eqref{eqn:a-from-pde}.
        Solving these relations for $\py S^x$ and equating the three resulting expressions produces the unique solution $(S^x, S^y) = (\cE_1, \cE_2)$, with $\py S^x = \cE_3$.
        The equations
        \[
        \px \cE_1 = 2 q_x^2, \qquad \py \cE_2 = 2 q_y^2, \qquad \py \cE_1 = \cE_3, \qquad \cE_3 + \px \cE_2 = 4 q_x q_y
        \]
        coming from~\eqref{eqn:pySxpxSy} -- \eqref{eqn:pySy} and the solvability of the system therefore produce a quadruple of fourth-order differential equations for $q$.
        If $b_0 \neq 0$, these equations are linearly independent.

        We now claim that for $b_0 \neq 0$, the resulting system of differential equations in $\{ D^i q \}_{0 \leq i \leq 4}$ is equivalent to $q$ being homogeneous and satisfying~\eqref{eqn:resulting-ODE} for the given values of $\lambda, b_0$, and some $a_0$.
        In what follows, we let $M$ denote the free $C^{\omega}(x,y)$-module generated by the formal symbols corresponding to $q$ and its derivative of order $\leq 6$, which we represent by the indeterminates $t_{i,j} := \partial_x^i \partial_y^j q$.
        In this formulation, differential relations among the derivatives of $q$ correspond to submodules of $M$.
        Let $\cI_0 \subset M$ be the ideal formed by all the differential equalities of order $\leq 6$ satisfied by a solution $q$ to $\det D^2 q = 0$, so
        \[
        \cI_0 := \left\{ D^{\alpha} ( q_{xx} q_{yy} - q_{xy}^2) \; : \; |\alpha| \leq 4 \right\}.
        \]
        Let $N_0$ be the submodule generated by $\cI_0$.
        We also define 
\begin{equation}\label{eqn:ell}
\begin{split}
    \cL ( q, Dq, D^2 q ; b_0, \lambda) &:= 2 q_y q^2 + y ( b_0^{-1} \lambda q + q q_y^2 - q^2 q_{yy}) + x q q_x q_y - x q^2 \frac{q_{xx}}{q_x} q_y.
\end{split}
\end{equation}
Finally, we define the expressions $\cR^x (D^{0 \leq i \leq 3} q; b_0, \lambda)$ and $\cR^y (D^{0 \leq i \leq 3} q ; b_0, \lambda)$ by
\begin{align}
\begin{split}\label{eqn:culry-rx-definition}
    \cR^x &:= - \px \left( \frac{x q_y}{q_x} \right) \\
    & \; \; \; \; + \px \frac{\px \cL - \cL \frac{q_{xx}}{q_x} \pm \sqrt{\left( \px \cL - \cL \frac{q_{xx}}{q_x} \right)^2 - 4 \cL \frac{q_x q_y + x (q_x q_{yy} + q_y q_{xx})}{q_x} \left( 3 q_x^2 - q q_{xx} - \frac{q^2 q_{xxx}}{q_x} + \frac{q^2 q_{xx}^2}{q_x^2}\right)}}{2 q_x \left( 3 q_x^2 - q q_{xx} - \frac{q^2 q_{xxx}}{q_x} + \frac{q^2 q_{xx}^2}{q_x^2} \right)}, 
\end{split} \\
\begin{split}\label{eqn:culry-ry-definition}
    \cR^y &:= - x \py \left( \frac{q_y}{y q_x} \right) \\
    & \; \; \; \; + \py \frac{\px \cL - \cL \frac{q_{xx}}{q_x} \pm \sqrt{\left( \px \cL - \cL \frac{q_{xx}}{q_x} \right)^2 - 4 \cL \frac{q_x q_y + x (q_x q_{yy} + q_y q_{xx})}{q_x} \left( 3 q_x^2 - q q_{xx} - \frac{q^2 q_{xxx}}{q_x} + \frac{q^2 q_{xx}^2}{q_x^2}\right)}}{2 y q_x \left( 3 q_x^2 - q q_{xx} - \frac{q^2 q_{xxx}}{q_x} + \frac{q^2 q_{xx}^2}{q_x^2} \right)}.
\end{split}
\end{align}
The expressions $\cR^x$ and $\cR^y$ have the following significance: let $u(x,y)$ be a function defined so that
\begin{equation}\label{eqn:u-equality}
\px \dfrac{2 b_0 q_y q^2 + y \left( \lambda q + b_0 q q_y^2 - b_0 q^2 q_{yy} + u(x,y) \cdot q q_x^2 - u(x,y) \cdot q^2 q_{xx}  \right)}{b_0 x q_y - u(x,y) \cdot y q_x} = 2 q_x^2.
\end{equation}
The function $u = a_0$ is equal to some constant if and only if $\cR^x = \cR^y = 0$, in which case the equality~\eqref{eqn:u-equality} becomes $\cF(D^{0 \leq i \leq 3} q; a_0, b_0, \lambda) = 0$, for this $a_0$ and the expression $F$ defined in~\eqref{eqn:f-q-dq-d2q-d3q-a,b,l}.
The first equivalence comes from rewriting the fraction in~\eqref{eqn:u-equality} as
\[
b_0 \frac{2 q_y q^2 + y ( b_0^{-1} \lambda q + q q_y^2 - q^2 q_{yy}) + x q q_x q_y - x q^2 \frac{q_{xx}}{q_x} q_y}{b_0 x q_y - u(x,y) \cdot y q_x} + \frac{q^2 q_{xx}}{q_x} - q q_x
\]
and expressing equation~\eqref{eqn:u-equality} as a quadratic in $b_0^{-1} (b_0 x q_y - u \cdot y q_x)$.
We may thus solve the equation further, in terms of $u$, which shows that the conditions $\cR^x = 0$ and $\cR^y =0$ are equivalent to $\px u = 0$ and $\py u = 0$, respectively.
In this case, we have $u = a_0$ for some constant, so equation~\eqref{eqn:u-equality} becomes equivalent to the derivation of $\cF(D^{0 \leq i \leq 3} q; a_0, b_0, \lambda) = 0$ in~\eqref{eqn:f-q-dq-d2q-d3q-a,b,l} for this $a_0$.

Note that a differential equality of the form $\partial_i (a + \sqrt{b}) = \partial_i c$ can be rearranged into the differential polynomial equation $(\partial_i b)^2 = 2 b (\partial_i c - \partial_i a)^2$.
Using this rearrangement, let us denote $\tilde{\cR}^x, \tilde{\cR}^y$ to be the expressions obtained by expressing the relations $\cR^x = 0$ and $\cR^y = 0$ as polynomials of $\{ D^{0 \leq i \leq 4} q ; b_0, \lambda \}$ in this manner, thereby removing the radicals.
Likewise, we denote by $\tilde{\cT}^x$ and $\tilde{\cT}^y$ the polynomials corresponding to the two expressions of~\eqref{eqn:sixth-order-condition} under this rearrangement.
We may then consider the polynomial ideals
\begin{align*}
    \cI &:= \cI_0 \oplus \la \px \cE_1 - 2 q_x^2, \py \cE_2 - 2 q_y^2, \py \cE_1 - \cE_3, \cE_3 + \px \cE_2 - 4 q_x q_y \rg, \\
    \cJ &:= \cI_0 \oplus \la \tilde{\cR}^x, \tilde{\cR}^y, \tilde{\cT}^x, \tilde{\cT}^y \rg,
\end{align*}
and define $N_{\cI}$ and $N_{\cJ}$ to be the submodules of $M$ generated by the conditions contained in $\cI$ and $\cJ$, respectively.
The submodule $N_{\cI}$ encodes all the conditions on $q$ for the existence of a Ricci soliton metric~\eqref{eqn:PaperMetric} with $B(y) = y^2 + b_0$.
The submodule $N_{\cJ}$ contains the functions $q$ that are homogeneous upon translating $x,y$ and, when expressed as $q = (y-y_0) f \left( \frac{x-x_0}{y-y_0} \right)$, satisfy equation~\eqref{eqn:resulting-ODE} for some $a_0$.
Indeed, such functions satisfy the Monge-Amp\`ere equation $\det D^2 q = 0$ by~\eqref{eqn:homogeneous-partials}, hence also all the relations of $\cI_0$.
By the computation of Lemma~\ref{lemma:sixth-order-condition}, they satisfy the condition~\eqref{eqn:sixth-order-condition}, so the polynomials $\tilde{\cT}^x, \tilde{\cT}^y$ vanish.
Finally, the computation~\eqref{eqn:u-equality} shows that the polynomials $\tilde{\cR}^x$ and $\tilde{\cR}^y$ vanish on a homogeneous function $q$ if and only if it satisfies~\eqref{eqn:resulting-ODE} for some $a_0$.

We will prove that $\cI = \cJ$ generate the same set of differential conditions on $q$.
As noted above, any submodule of the free $C^{\omega}(x,y)$-module $M$ is finitely generated and Noetherian.
Hence, ascending chains of submodules stabilize.
Our work in Section~\ref{subsect:solitons-conformal-S2H2} shows that any function $q$ satisfying the relations in $\cJ$ produces a Ricci soliton metric with the terms given above, so $N_{\cI} \subseteq N_{\cJ}$ at the level of differential constraints.
Equivalntly, $\cV(N_{\cJ}) \subseteq \cV(N_{\cI})$ on the level of vanishing loci of modules, where $\cV(N)$ denotes the set of all analytic functions $q$ satisfying all differential relations in $N$.
By the real Nullstellensatz, applied in the form of \cite{nullstellensatz}*{Lemma 2.5}, this implies $\cI \subseteq \sqrt[\bR]{\cJ}$, where $\sqrt[\bR]{\cJ} := \{ f : f^{2k} + \sum_i g_i^2 \in \cJ \}$ denotes the real radical of $\cJ$.
Moreover, the Lasker-Noether theorem implies that every submodule of a finitely generated module in a Noetherian ring admits a primary decomposition, and the above containment shows that the associated primes of $N_{\cJ}$ must contain those of $N_{\cI}$.

On the other hand, the generators of $\cI$ are independent and the ideal $\la \px \cE_1 - 2 q_x^2, \py \cE_2 - 2 q_y^2, \py \cE_1 - \cE_3, \cE_3 + \px \cE_2 - 4 q_x q_y \rg$ is coprime with $\la \{ t_{i,j} \} \rg_{5 \leq i+j \leq 6} = \la \{ \partial_x^i \partial_y^j q  \} \rg_{5 \leq i+j \leq 6}$, so the differential conditions added to define of $N_{\cI}$ are of lower order than those defining $N_{\cJ}$. 
Therefore, $N_{\cI} \subseteq N_{\cJ}$ is only possible if $N_{\cI} = N_{\cJ}$, meaning that the ideals $\cI = \cJ$ are equivalent under the algebraic prolongation of the differential system defined by $\det D^2 q = 0$, thus defining the same differential conditions on $q$. 
In particular, any $q$ producing a Ricci soliton with $B(y) = y^2 + b_0$ must satisfy the differential relations $\tilde{\cT}^x$ and $\tilde{\cT}^y$, whereby Lemma \ref{lemma:sixth-order-condition} shows that it is homogeneous after translating $x,y$.
This completes the proof.
    \end{proof}
    The assumption that $b_0 \neq 0$ in this result is necessary and cannot be dropped: indeed, the expression~\eqref{eqn:ell} for $\cL$ is not defined for $b_0 \neq 0$, and no analogous definition of the ideal $\cI$ is possible as in the above argument.
    Moreover, when $b_0 = 0$, the function $A(x)$ is not necessarily quadratic, as evidenced by the metrics \eqref{eqn:singular-soliton-metrics}: for $\lambda = 0$ and any $\alpha \not\in \{ 0, \frac{1}{2}, 1\}$, we can take $B(y) = F_{1-\alpha, 1,0}(y) = y^2$ and $A(x) = F_{\alpha, -1, k}(x)$ for $k \neq 0$, which is not quadratic.
    We hypothesize that for the cusp metric with $B(y) = y^2$, the Ricci soliton system may admit solutions that do not necessarily have conformally cylindrical base, so $q$ is not necessarily homogeneous.

\begin{bibdiv}
\begin{biblist}

\bib{singular-solitons}{article}{
author = {Alexakis, S.},
author = {Chen, D.},
author = {Fournodavlos, G.},
title = {Singular Ricci solitons and their stability under the Ricci flow},
journal = {Commun. Part. Diff. Equ.},
volume = {40},
number = {12},
pages = {2123-2172},
year = {2015},
publisher = {Taylor \& Francis},
doi = {10.1080/03605302.2015.1081609},
url = {https://doi.org/10.1080/03605302.2015.1081609},
}

\bib{alvarado-fams}{article}{
author={Alvarado, C. A.},
author={Ozuch, T.},
author={Santiago, D. A.},
title={Families of degenerating Poincar\'e-Einstein metrics on $\mathbb{R}^4$}, 
journal = {Ann. Glob. Anal. Geom.},
volume = {65},
number = {5},
year= {2024},
doi = {doi:10.1007/s10455-023-09923-y},
}

\bib{Appleton}{article}{
author={Appleton, A.},
title={Eguchi--Hanson Singularities in U(2)-Invariant Ricci Flow},
journal={Peking Math. J.},
year={2023},
volume={6},
number={1},
pages={1-141},
doi={10.1007/s42543-022-00048-y},
url={https://doi.org/10.1007/s42543-022-00048-y}
}

\bib{ambitoric}{article}{ 
author={Apostolov, A.},
author={Calderbank, M. J.},
author={Gauduchon, P.},
title = {Ambitoric geometry I: Einstein metrics and extremal ambikähler structures},
journal = {J. reine angew. Math. (Crelle's Journal)},
doi = {doi:10.1515/crelle-2014-0060},
url = {https://doi.org/10.1515/crelle-2014-0060},
number = {721},
volume = {2016},
year = {2016},
pages = {109--147},
}

\bib{apostolov-cifarelli}{article}{
author = {Apostolov, A.},
author = {Cifarelli, C.},
title = {Hamiltonian $2$-forms and new explicit Calabi-Yau metrics and gradient steady K\"ahler-Ricci solitons on $\bC^n$},
journal = {J. Differential Geom. (to appear)},
year = {2025},
}

\bib{baird-danielo}{article}{
author = {Baird, P.},
author = {Danielo, L.},
title = {Three-dimensional Ricci solitons which project to surfaces},
journal = {J. reine angew. Math. (Crelle's Journal)},
volume = {2007},
number = {608},
year = {2007},
pages = {65--91},
}

\bib{bamler-annals}{article}{
author = {Bamler, R.},
URL ={https://www.jstor.org/stable/10.4007/annals.2018.188.3.2},
journal = {Annals of Mathematics},
number = {3},
pages = {753--831},
title = {Convergence of Ricci flows with bounded scalar curvature},
volume = {188},
year = {2018},
}

\bib{bcdmz}{article}{
author = {Bamler, R.},
author = {Chow, B.},
author = {Deng, Y.},
author = {Ma, Z.},
author = {Zhang, Y.},
title = {Four-dimensional steady gradient Ricci solitons with 3-cylindrical tangent flows at infinity},
journal = {Advances in Mathematics},
volume = {401},
pages = {108285},
year = {2022},
doi = {https://doi.org/10.1016/j.aim.2022.108285},
url = {https://www.sciencedirect.com/science/article/pii/S0001870822001013},
}

\bib{bccd}{article}{
author={Bamler, R. H.},
author={Cifarelli, C.},
author={Conlon, R. J.},
author={Deruelle, A.},
title = {A new complete two-dimensional shrinking gradient Kähler-Ricci soliton},
year = {2024},
journal = {Geom. Funct. Anal.},
pages = {377--392},
volume = {34},
number = {2},
url = {https://doi.org/10.1007/s00039-024-00668-9},
}

\bib{besse}{book}{
author = {Besse, A. L.},
title = {Einstein manifolds},
series = {Ergebnisse der Math. und ihrer Grenzgebiete 10},
publisher = {Springer, Berlin},
year = {1988},
}

\bib{brendle-boundary}{article}{
author = {Brendle, S.},
title = {Curvature flows on surfaces with boundary},
journal = {Math. Ann.},
volume = {324},
number = {3},
pages = {491--519},
year = {2002},
}

\bib{brendle-1}{article}{
author={Brendle, S.},
title = {Rotational symmetry of self-similar solutions to the Ricci flow},
journal = {Invent. Math.},
doi = {doi:10.1007/s00222-013-0457-0},
url = {https://doi.org/10.1007/s00222-013-0457-0},
volume = {194},
year = {2013},
pages = {731--764},
}

\bib{brendle-2}{article}{
author={Brendle, S.},
title = {Rotational symmetry of Ricci solitons in higher dimensions},
journal = {J. Differential Geom.},
doi = {DOI: 10.4310/jdg/1405447804},
url = {https://doi.org/10.1007/s00222-013-0457-0},
volume = {97},
number = {2},
publisher = {Lehigh University},
pages = {191 -- 214},
year = {2014},
}

\bib{bryant}{article}{
author = {Bryant, R. L.},
title = {Ricci flow solitons in dimension three with SO(3)-symmetries.},
journal = {Accessed \href{https://services.math.duke.edu/~bryant/3DRotSymRicciSolitons.pdf}{https://services.math.duke.edu/$\sim$bryant/3DRotSymRicciSolitons.pdf}},
year = {2005},
}

\bib{exterior-differential}{book}{
author = {Bryant, R.},
author = {Chern, S.S.},
author = {Gardner, R.B.},
author = {Goldschmidt, H.L.},
author = {Griffiths, P.A.},
title = {Exterior Differential Systems},
series = {Mathematical Sciences Research Institute Publications},
number = {18},
publisher = {Springer-Verlag},
year = {1991},
}

\bib{buzano-dancer-wang}{article}{
author = {Buzano, M.},
author = {Dancer, A. S.},
author = {Gallaugher, M.},
author = {Wang, M.},
title = {A family of steady Ricci solitons and Ricci-flat metrics},
journal = {Comm. Anal. Geom.},
volume = {23},
year = {2015},
number = {3},
pages = {611--638},
}

\bib{cao-rotational}{article}{
author={Cao, H. D.},
title= {Existence of gradient K\"ahler-Ricci solitons},
journal= {Elliptic and parabolic methods in geometry},
year = {1996},
pages = {1--16},
}

\bib{cao-chen}{article}{
author={Cao, H. D.},
author={Chen, Q.},
title = {On locally conformally flat gradient steady Ricci solitons},
journal = {Trans. Amer. Math. Soc.},
doi = {DOI: 10.4310/jdg/1405447804},
url = {https://doi.org/10.1007/s00222-013-0457-0},
volume = {364},
number = {5},
publisher = {American Mathematical Society},
pages = {2377 -- 2391},
year = {2012},
ISSN = {00029947}
}

\bib{lcf-gradient-shrinkers}{article}{
author = {Cao, X.},
author = {Wang, B.},
author = {Zhang, Z.},
title = {On locally conformally flat gradient shrinking Ricci solitons},
journal = {Commun. Contemp. Math.},
volume = {13},
number = {2},
year = {2011},
pages = {269--282},
}

\bib{catino-mantegazza-mazzieri}{article}{
author = {Catino, G.},
author = {Mantegazza, C.},
author = {Mazzieri, L.},
title = {Locally conformally flat ancient Ricci flows},
volume = {8},
journal = {Anal. PDE},
number = {2},
pages = {365 -- 371},
year = {2015},
doi = {10.2140/apde.2015.8.365},
URL = {https://doi.org/10.2140/apde.2015.8.365},
}

\bib{weyl-evolution}{article}{
author = {Catino, G.},
author = {Mantegazza, C.},
title = {The evolution of the Weyl tensor under the Ricci flow},
journal = {Ann. Inst. Fourier (Grenoble)},
volume = {61},
number = {4},
year = {2011},
pages = {1407--1435},
}

\bib{chen-teo}{article}{
author = {Chen, Y.},
author = {Teo, E.},
title = {A new AF gravitational instanton.},
journal = {Phys. Lett. B}, 
volume = {703},
number = {3},
pages = {359–362}, 
year = {2011},
}

\bib{cvz-collapsing-k3}{article}{
  author = {Chen, G.},
  author = {Viaclovsky, J.},
  author = {Zhang, R.},
  title = {Collapsing Ricci-flat metrics on elliptic K3 surfaces},
  journal = {Commun. Anal. Geom.},
  volume = {28},
  number = {8},
  year = {2020},
  pages = {2019--2133},
}

\bib{chow}{article}{
author = {Chow, T.K.A.},
journal = {J. reine angew. Math. (Crelle's Journal)},
volume = {783},
pages = {159--216},
year = {2022},
doi = {10.1515/ crelle-2021-0060},
title = {Ricci flow on manifolds with boundary with arbitrary initial metric},
}

\bib{dancer-hall-wang}{article}{
author = {Dancer, A.},
author = {Hall, J.},
author = {Wang, M.},
title = {Cohomogeneity one shrinking Ricci solitons: An analytic and numerical study},
volume = {17},
journal = {Asian J. Math.},
number = {1},
publisher = {International Press of Boston},
pages = {33 -- 62},
year = {2013},
}

\bib{dancer-wang}{article}{
author = {Dancer, A.},
author = {Wang, M.},
title = {On Ricci solitons of cohomogeneity one},
volume = {39},
journal = {Ann. Glob. Anal. Geom.},
number = {3},
publisher = {International Press of Boston},
pages = {259 -- 292},
year = {2011},
}

\bib{derdzinski}{article}{
author={Derdzinski, A.},
title={Einstein metrics in dimension four},
journal={In: Handbook of Differential Geometry},
year = {1999},
volume = {Vol. 1},
number = {L. Verstraelen, F. Dillen (eds.)},
pages = {419--707},
}

\bib{fik}{article}{
author = {Feldman, K.},
author = {Ilmanen, T.},
author = {Knopf, D.},
title = {Rotationally symmetric shrinking and expanding gradient K\"ahler-Ricci solitons},
journal = {J. Differential Geom.},
doi = {10.4310/jdg/1090511686},
url = {https://doi.org/10.4310/jdg/1090511686},
volume = {65},
number = {2},
publisher = {Lehigh University},
pages = {169 -- 209},
year = {2003},
}

\bib{Fu+EtAl}{article}{
author={Fu, X.},
author = {Hein, H. J.},
author = {Jiang, X.},
title={Asymptotics of K{\"a}hler--Einstein metrics on complex hyperbolic cusps},
journal={Calc. Var. PDE},
year={2023},
volume={63},
number={1},
issn={1432-0835},
doi={10.1007/s00526-023-02613-4},
url={https://doi.org/10.1007/s00526-023-02613-4}
}

\bib{galvez-nelli}{article}{
title = {Entire solutions of the degenerate Monge–Ampère equation with a finite number of singularities},
journal = {J. Diff. Equ.},
volume = {261},
number = {11},
pages = {6614-6631},
year = {2016},
issn = {0022-0396},
doi = {https://doi.org/10.1016/j.jde.2016.08.046},
url = {https://www.sciencedirect.com/science/article/pii/S0022039616302650},
author = {G\'alvez, J. A.},
author = {Nelli, B.},
}

\bib{galvez-hauswirth-mira}{article}{
author = {G\'alvez, J. A.},
author = {Hauswirth, L.},
author = {Mira, P.},
title = {Surfaces of constant curvature in $\bR^3$ with singularities}, 
journal = {Adv. Math.},
volume = {241},
year = {2013}, 
pages = {103–126},
}

\bib{galvez-jimenez-mira}{article}{
author = {G\'alvez, J. A.},
author = {Jim\'enez, A.},
author = {Mira, P.},
title = {Isolated singularities of graphs in warped products and Monge-Ampère equations},
journal = {J. Diff. Equations},
volume = {260},
year = {2016},
pages = {2163-2189},
}

\bib{gianniotis-1}{article}{
author = {Gianniotis, P.},
title = {The Ricci flow on manifolds with boundary},
journal = {J. Differential Geom.},
volume = {104},
number = {2},
pages = {291--324},
year = {2016},
doi = {10.4310/jdg/1476367059},
}

\bib{gianniotis-2}{article}{
author = {Gianniotis, P.},
title = {Boundary estimates for the Ricci flow}, 
journal = {Calc. Var. PDE},
volume = {55},
number = {1, art. 9},
year = {2016},
}

\bib{plebanski-fam}{article}{
author={Griffiths, J. B.},
author={Podolsk\'y, J.},
title={A new look at the Pleba\'nski-Demia\'nski family of solutions},
journal = {Int. J. Mod. Phys. D},
number = {3},
volume = {15}, 
publisher = {World Scientific Publishing Company},
url = {https://doi.org/10.1142/S0218271806007742},
year = {2006},
pages = {335--369},
}

\bib{hallgren-1}{article}{
title = {The entropy of Ricci flows with Type-I scalar curvature bounds},
journal = {Adv. Math.},
volume = {418},
pages = {108940},
year = {2023},
issn = {0001-8708},
doi = {https://doi.org/10.1016/j.aim.2023.108940},
url = {https://www.sciencedirect.com/science/article/pii/S000187082300083X},
author = {Hallgren, M.},
}

\bib{hallgren-2}{article}{
author = {Hallgren, M.},
year = {2024},
title = {Ricci flow with Ricci curvature and volume bounded below},
journal = {Math. Ann.},
URL = {https://doi.org/10.1007/s00208-024-02821-z},
doi = {10.1007/s00208-024-02821-z},
}

\bib{Hein+EtAl}{article}{
title={Nilpotent structures and collapsing Ricci-flat metrics on K3 surfaces}, 
author={Hein, H. J.},
author = {Sun, S.},
author = {Viaclovsky, J.},
author = {Zhang, R.},
year={2018},
journal={J. Amer. Math. Soc.},
url={https://api.semanticscholar.org/CorpusID:119607480},
}

\bib{hui}{article}{
author = {Hui, K.},
title={Existence of singular rotationally symmetric gradient Ricci solitons in higher dimensions},
journal={Can. Math. Bull.},
year = {2024},
pages = {1-18},
}

\bib{koiso}{article}{
author = {Koiso, N.},
title = {On rotationally symmetric Hamilton's equation for Kähler-Einstein metrics},
editor = {T. Ochiai},
journal = {Advanced Studies in Pure Mathematics},
publisher = {Academic Press},
pages = {327--337},
year = {1990},
volume = {18},
number = {Recent Topics in Differential and Analytic Geometry},
doi = {https://doi.org/10.1016/B978-0-12-001018-9.50015-4},
url = {https://www.sciencedirect.com/science/article/pii/B9780120010189500154},
}

\bib{real-analyticity}{article}{
author = {Kotschwar, B. L.},
title = {A local version of Bando's theorem on the real-analyticity of solutions to the Ricci flow}, 
journal = {Bull. Lond. Math. Soc.},
volume = {45},
number = {1},
year = {2013}, 
pages = {153--158},
}

\bib{kunikawa-sakurai}{article}{
author = {Kunikawa, K.},
author = {Sakurai, Y.},
title = {Hamilton type entropy formula along the Ricci flow on surfaces with boundary},
journal = {Comm. Anal. Geom.},
volume = {31},
number = {7},
pages = {1655--1668},
year = {2023},
}

\bib{homogeneous-solitons}{article}{
author = {Lafuente, R.},
author = {Lauret, J.},
title = {Structure of homogeneous Ricci solitons and the Alekseevskii conjecture},
journal = {J. Differential Geom.},
volume = {98}, 
number = {2},
year = {2014},
pages = {315-347},
publisher = {Lehigh University},
}

\bib{elliptic}{book}{
author = {Lawden, D. F.},
title = {Elliptic Functions and Applications},
series = {Springer Applied Mathematical Sciences},
publisher = {Springer-Verlag New York},
year = {1989},
}

\bib{li-wang}{article}{
author = {Li, Y.},
author = {Wang, B.},
title = {On K\"ahler Ricci shrinker surfaces},
journal = {Acta Math.},
volume = {to appear},
}

\bib{nullstellensatz}{article}{
author = {Acquistapace, F.,}
author = {Broglia, F.},
author = {Nicoara, A.},
title = {A Nullstellensatz for Łojasiewicz ideals},
journal = {Rev. Mat. Iberoam.},
volume = {30},
year = {2014},
number = {4},
pages = {1479–1487},
}

Francesca Acquistapace, Fabrizio Broglia and Andreea Nicoara

\bib{milnor}{book}{
author = {Milnor, J.}, 
title = {Singular points of complex hypersurfaces},
series = {Annals of Mathematics Studies},
number = {61},
publisher = {Princeton University Press, Princeton, NJ},
year = {1968},
}

\bib{munteanu-wang}{article}{
author = {Munteanu, O.},
author = {Wang, J.},
title = {Positively curved shrinking Ricci solitons are compact},
journal = {J. Differential Geom.},
volume = {106},
number = {3},
pages = {499--505},
year = {2017},
doi = {10.4310/jdg/1500084024},
}

\bib{ni-wallach}{article}{
author = {Ni, L.},
author = {Wallach, N.},
title = {On a classification of gradient shrinking solitons}, 
journal = {Math. Res. Lett.},
volume = {15},
number = {5},
year = {2008}, 
pages = {941--955},
}

\bib{nienhaus-wink-4d}{article}{
author = {Nienhaus, J.},
author = {Wink, M.},
title = {New expanding Ricci solitons starting in dimension four},
journal = {J. Geom. Anal.},
year = {2024},
volume = {34},
number = {327},
}

\bib{petersen-wylie-1}{article}{
author = {Petersen, P.},
author = {Wylie, W.}, 
title = {Rigidity of gradient Ricci solitons},
volume = {241},
number = {2},
year = {2009},
pages = {329-345},
}

\bib{petersen-wylie-2}{article}{
author = {Petersen, P.},
author = {Wylie, W.}, 
journal = {Proc. Amer. Math. Soc.},
number = {6},
pages = {2085--2092},
publisher = {American Mathematical Society},
title = {On Gradient Ricci Solitons with Symmetry},
volume = {137},
year = {2009},
}

\bib{petersen-wylie-3}{article}{
author = {Petersen, P.},
author = {Wylie, W.}, 
title = {On the classification of gradient Ricci solitons},
journal = {Geom. Topol.},
volume = {14},
number = {4},
year = {2010}, 
pages = {2277--2300},
}

\bib{pbnski}{article}{
author = {Pleba\'nski, J. F.},
author = {Demia\'nski, M.},
title = {Rotating, charged, and uniformly accelerating mass in general relativity},
journal = {Ann. Phys.},
volume = {98}, 
number = {1},
year = {1976},
pages = {98-127},
}

\bib{shen}{article}{
author = {Shen, Y.},
title =  {On Ricci deformation of a Riemannian metric on manifold with boundary}, 
journal = {Pacific J. Math.},
volume = {173},
number = {1},
year = {1996},
pages = {203–-221},
}

\bib{Tian-Yau}{article}{
author = {Tian, G.},
author = {Yau, S. T.},
journal = {J. Amer. Math. Soc.},
pages = {579--609},
volume = {3},
publisher = {American Mathematical Society},
number = {3},
year = {1990},
title = {Complete K\"ahler Manifolds with zero Ricci Curvature. {I}},
}

\bib{verdiani-ziller}{article}{
author = {Verdiani, L.},
author = {Ziller, W.},
title = {Smoothness conditions in cohomogeneity one manifolds},
journal = {Transform. Groups},
volume = {27}, 
pages = {311–-342},
year = {2022},
doi  = {https://doi.org/10.1007/s00031-020-09618-9},
}

\bib{small-eigenvalues}{article}{
author = {Wu, Y.},
author = {Xu, Y.},
title = {Small eigenvalues of closed Riemann surfaces for large genus},
journal = {Trans. Amer. Math. Soc.},
volume = {375},
year = {2022},
pages = {3641--3663},
}

\end{biblist}

\end{bibdiv}

\end{document}